\documentclass[english,british]{article}
\title{Black Box Galois Representations.}
\author{Alejandro Arg\'{a}ez-Garc\'{i}a
        \and
        John Cremona}

\usepackage{amsfonts,amsmath,amsthm,amssymb}
\usepackage[linesnumbered,ruled,noend]{algorithm2e}
\usepackage{float}
\usepackage{graphicx}
\usepackage{enumerate}
\usepackage{hyperref}
\usepackage{tikz}
\usetikzlibrary{positioning,decorations.pathreplacing, matrix,arrows,calc}
\tikzset{
  curvedlink/.style={
    to path={
      let \p1=(\tikztostart.east), \p2=(\tikztotarget.west),
      \n1= {abs(\y2-\y1)/4} in
      (\p1) arc(90:-90:\n1) -- ([yshift=2*\n1]\p2) arc (90:270:\n1)
    },
  }
}

\makeatletter
\def\BState{\State\hskip-\ALG@thistlm}
\makeatother

\theoremstyle{plain}
\newtheorem{theorem}{Theorem}[section]
\newtheorem{lemma}[theorem]{Lemma}
\newtheorem{proposition}[theorem]{Proposition}
\newtheorem{corollary}[theorem]{Corollary}

\theoremstyle{definition} 
\newtheorem{definition}[theorem]{Definition}
\newtheorem{example}{Example}
\newtheorem*{example*}{Example}
\newtheorem{remark}[theorem]{Remark}

\newcommand{\F}{\mathbb{F}}
\newcommand{\Q}{\mathbb{Q}}
\newcommand{\Z}{\mathbb{Z}}

\def\P{{\mathbb{P}}}

\DeclareMathOperator{\tr}{tr}
\DeclareMathOperator{\frob}{Frob}
\newcommand{\p}{\mathfrak{p}}
\newcommand{\frobp}{\frob\p}
\newcommand{\n}{\mathfrak{n}}
\newcommand{\N}{\mathfrak{N}}

\newcommand{\oo}{\mathcal{O}}
\newcommand{\FF}{\mathcal{F}}
\newcommand{\Cl}{\mathcal{C}}
\DeclareMathOperator{\gl}{GL}
\DeclareMathOperator{\gal}{Gal}
\DeclareMathOperator{\ord}{ord}
\DeclareMathOperator{\x}{\mathbf{x}}
\DeclareMathOperator{\y}{\mathbf{y}}
\DeclareMathOperator{\z}{\mathbf{z}}
\DeclareMathOperator{\Aa}{\mathbf{A}}

\DeclareMathOperator{\uu}{\mathbf{u}}
\DeclareMathOperator{\UU}{\mathbf{U}}
\DeclareMathOperator{\vv}{\mathbf{v}}

\DeclareMathOperator{\WW}{\mathbf{W}}

\DeclareMathOperator{\qq}{\mathbf{q}}
\DeclareMathOperator{\ee}{\mathbf{e}}

\DeclareMathOperator{\II}{\mathbf{I}}

\DeclareMathOperator{\0}{\mathbf{0}}

\DeclareMathOperator{\Aut}{Aut}
\DeclareMathOperator{\BT}{BT}
\DeclareMathOperator{\Hom}{Hom}
\DeclareMathOperator{\disc}{disc}
\DeclareMathOperator{\rk}{rk}
\newcommand{\rhobar}{\overline{\rho}}
\newcommand{\Kbar}{\overline{K}}
\newcommand{\ndiv}{\nmid}
\newcommand{\CDT}{\v{C}ebotarev Density Theorem}
\newcommand{\Sage}{{\tt Sage}}
\newcommand{\Magma}{{\sc Magma}}
\newcommand{\Pari}{{\tt Pari/GP}}

\newcommand{\lmfdbnumberfield}[1]{\href{http://www.lmfdb.org/NumberField/#1}{#1}}
\newcommand{\lmfdbbmf}[3]{\href{http://www.lmfdb.org/ModularForm/GL2/ImaginaryQuadratic/#1/#2/#3}{#1-#2-#3}}
\newcommand{\lmfdbec}[3]{\href{http://www.lmfdb.org/EllipticCurve/Q/#1/#2/#3}{#1#2#3}}
\newcommand{\lmfdbeciso}[2]{\href{http://www.lmfdb.org/EllipticCurve/Q/#1/#2}{#1#2}}
\newcommand{\lmfdbecnfiso}[3]{\href{http://www.lmfdb.org/EllipticCurve/#1/#2/#3}{#1-#2-#3}}

\begin{document}

\maketitle

\begin{abstract}
We develop methods to study $2$-dimensional $2$-adic Galois
representations $\rho$ of the absolute Galois group of a number
field~$K$, unramified outside a known finite set of primes $S$ of $K$,
which are presented as \textit{Black Box} representations, where we
only have access to the characteristic polynomials of Frobenius
automorphisms at a finite set of primes.  Using suitable finite test
sets of primes, depending only on $K$ and~$S$, we show how to
determine the determinant $\det\rho$, whether or not $\rho$ is
residually reducible, and further information about the size of the
\textit{isogeny graph} of $\rho$ whose vertices are homothety classes
of stable lattices.  The methods are illustrated with examples for
$K=\Q$, and for $K$ imaginary quadratic, $\rho$ being the
representation attached to a Bianchi modular form.

These results form part of the first author's thesis $\cite{argaez01}$.
\end{abstract}

\section{Introduction}\label{sec:intro}
Let $K$ be a number field. Denote by $\Kbar$ the algebraic closure of $K$
and by $G_{K}=\gal(\Kbar/K)$ the absolute Galois group of $K$.  By an
\emph{$\ell$-adic Galois representation} of $K$ we mean a continuous
representation $\rho\colon G_K\to\Aut(V)$, where $V$ is a finite-dimensional
vector space over $\Q_{\ell}$, which is unramified outside a finite
set of primes of~$K$.  Such representations arise throughout arithmetic
geometry, where typically $V$ is a cohomology space attached to an
algebraic variety.  For example, modularity of elliptic curves over
$K$ can be interpreted as a statement that the $2$-dimensional Galois
representation arising from the action of~$G_K$ on the $\ell$-adic
Tate module of the elliptic curve is equivalent, as a representation,
to a representation attached to a suitable automorphic form over~$K$.
In this $2$-dimensional context and with $\ell=2$, techniques have
been developed by Serre \cite{serre-84}, Faltings, Livn\'e
\cite{livne} and others to establish such an
equivalence using only the characteristic polynomial of $\rho(\sigma)$
for a \emph{finite} number of elements $\sigma\in G_K$.  Here the
ramified set of primes~$S$ is known in advance and the Galois
automorphisms $\sigma$ which are used in the Serre-Faltings-Livn\'e
method have the form $\sigma=\frobp$ where $\p$ is a prime not in~$S$,
so that $\rho$ is unramified at~$\p$.

Motivated by such applications, in this paper we study Galois
representations of $K$ as ``Black Boxes'' where both the base
field~$K$ and the finite ramified set~$S$ are specified in advance,
and the only information we have about~$\rho$ is the characteristic
polynomial of $\rho(\frobp)$ for certain primes~$\p$ not in~$S$; we may
specify these primes, but only finitely many of them.  Using such a
Black Box as an oracle, we seek to give algorithmic answers to
questions such as the following (see the following section for
definitions):
\begin{itemize}
\item Is $\rho$ irreducible?  Is $\rho$ trivial, or does it have
  trivial semisimplification?
\item What is the determinant character of~$\rho$?
\item What is the residual representation~$\rhobar$?  Is it
  irreducible, trivial, or with trivial semisimplification?
\item How many lattices in~$V$ (up to homothety) are stable under
  $\rho$ -- in other words, how large is the isogeny class of~$\rho$?
\end{itemize}

In the case where $\dim V=2$ and $\ell=2$, we give substantial answers
to these questions in the following sections.  In
Section~\ref{sec:background} we recall basic facts about Galois
representations and introduce key ideas and definitions, for arbitrary
finite dimension and arbitrary prime~$\ell$.  From
Section~\ref{sec:characters} on, we restrict to $\ell=2$, first
considering the case of one-dimensional representations (characters);
these are relevant in any dimension since $\det\rho$ is a character.
Although in the applications $\det\rho$ is always a power of the
$\ell$-adic cyclotomic character of~$G_K$, we will not assume this,
and in fact the methods of Section~\ref{sec:characters} may be used to
prove that the determinant of a Black Box Galois representation has
this form.  From Section~\ref{sec:residual} we restrict to
$2$-dimensional $2$-adic representations, starting with the question
of whether the residual representation~$\rhobar$ is or is not
irreducible (over~$\F_2$), and what is its splitting field (see
Section~\ref{sec:background} for definitions); a complete solution is
given for both these questions, which we can express as answering the
question of whether or not the isogeny class of~$\rho$ consists of
only one element.  In Section~\ref{sec:small-large} we consider
further the residually reducible case and determine whether or not the
isogeny class of~$\rho$ contains a representative with trivial
residual representation, or equivalently whether the size of the class
is~$2$ or greater.  In Section~\ref{sectionLargeIsogenyClasswidthat2}
we assume that $\rho$ is trivial modulo~$2^k$ for some $k\ge1$ and
determine the reduction of~$\rho\pmod{2^{k+1}}$ completely, in
particular whether it too is trivial.  Hence, for example, we can
determine~$\rho\pmod4$ when $\rhobar$ is trivial, and also as a final
application, in Section~\ref{sec:triviality} we give a (finite)
criterion for whether $\rho$ has trivial semisimplification.

For each of these tasks we will define a finite set~$T$ of primes
of~$K$, disjoint from $S$, such that the Black Box information about
$\rho(\frobp)$ for $\p\in T$ is sufficient to answer the question
under consideration.  In each case except for the criterion for $\rho$
to have trivial semisimplification, only finite $2$-adic precision is
needed about the determinant and trace of~$\rho(\frobp)$, though we
note that in the applications the $2$-adic representation inside the
Black Box is always part of a compatible family of $\ell$-adic
representations, so that in practice these are rational or algebraic
integers and will be known exactly.

The following theorem summarises our results; we refer to later
sections for the definitions of the sets $T_0$, $T_1$ and~$T_2$ and
for algorithms to compute them.  Here $F_{\p}(t)$ denotes the
characteristic polynomial of~$\rho(\frobp)$ (see (\ref{ecuacionF(X)})
below), for a prime~$\p\notin S$.

\begin{theorem}\label{thm:main}
  Let $K$ be a number field and $S$ a finite set of primes of $K$.
  There exist finite sets of primes $T_0$, $T_1$ and $T_2$, disjoint
  from~$S$, depending only on $K$ and~$S$, such that for any
  $2$-dimensional $2$-adic Galois representation $\rho$ of~$G_K$ which
  is continuous and unramified outside~$S$,
  \begin{enumerate}
    \item the reducibility of the residual representation~$\rhobar$, and
      its splitting field when irreducible, are uniquely determined by
      the values of $F_{\p}(1)\pmod2$, \textit{i.e.}, by the
      traces of $\rhobar(\frobp)$, for $\p\in T_0$;
    \item the determinant character $\det\rho$ is uniquely determined
      by the values of $F_{\p}(0)=\det\rho(\frobp)$ for $\p\in T_1$;
    \item when $\rhobar$ is reducible,
      \begin{itemize}
        \item the existence of an equivalent representation whose
          residual representation is trivial is determined by the
          values of $F_{\p}(1)\pmod4$ for $\p\in T_2$;
        \item if $\rho\pmod{2^k}$ is trivial for some $k\ge1$, the
          reduction $\rho\pmod{2^{k+1}}$ is uniquely determined by the
          values of $F_{\p}(1)\pmod{2^{2k+1}}$ for $\p\in T_2$; in
          particular, there is an equivalent representation which is
          trivial modulo~$2^{k+1}$ if and only if
          $F_{\p}(0)\equiv1\pmod{2^{k+1}}$ and
          $F_{\p}(1)\equiv0\pmod{2^{2k+2}}$ for all~$\p\in T_2$;
        \item $\rho$ has trivial semisimplification if and only if
          $F_{\p}(t)=(t-1)^2$ for all~$\p\in T_2$; that is if and only
          if $\tr\rho(\frobp)=2$ and $\det\rho(\frobp)=1$ for
          all~$\p\in T_2$.
      \end{itemize}
  \end{enumerate}
\end{theorem}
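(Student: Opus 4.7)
The plan is to regard this theorem as a consolidated statement of the results that will be established, piece by piece, in Sections~\ref{sec:characters}--\ref{sec:triviality}, and to verify each bullet by reducing to a finite Chebotarev computation together with the appropriate algebraic input. Throughout, the common mechanism is that the datum one seeks to extract from $\rho$ factors through a finite quotient of $G_K$ (or through a character group constrained by class field theory), and Hermite--Minkowski plus ramification at~$S$ force the relevant finite extensions of $K$ to lie in a fixed, effectively describable compositum~$L_S/K$. The set $T_0$, $T_1$ or $T_2$ is then constructed as a set of primes~$\p\notin S$ whose Frobenii in $\gal(L_S/K)$ (or in the associated ray class group) cover every conjugacy class (respectively, generate the dual group) needed to separate the possible outputs.

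For part~(1), I would first observe that $\rhobar\colon G_K\to\gl_2(\F_2)\cong S_3$ factors through a quotient of bounded degree unramified outside~$S$, and that a direct computation gives $F_{\p}(1)\equiv\tr\rhobar(\frobp)\pmod2$ (using $\det\rhobar=1$ in~$\F_2^{\times}$). Since traces of elements of~$S_3$ inside~$\gl_2(\F_2)$ distinguish the trivial class, the order-$2$ classes and the order-$3$ class, reducibility and the splitting field are detected by the function $\p\mapsto F_{\p}(1)\bmod2$; the finite set $T_0$ is chosen to hit every conjugacy class of $\gal(L_S/K)$ for each admissible~$L_S$, so Chebotarev finishes. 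Part~(2) follows from the material of Section~\ref{sec:characters}: the determinant is a continuous character $G_K\to\Z_2^{\times}$ unramified outside~$S$, i.e.\ an element of a finitely generated profinite abelian group controlled by the $S$-ray class group, and $T_1$ is taken to be a set of primes whose Frobenius images topologically generate this target, so that the values $F_{\p}(0)=\det\rho(\frobp)$ for $\p\in T_1$ pin $\det\rho$ down.

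For part~(3), suppose $\rho\equiv I\pmod{2^k}$ and write $\rho(g)=I+2^kB(g)$ with $B(g)\in M_2(\Z_2)$. A short calculation gives
\[
\tr\rho(g)=2+2^k\tr B(g),\qquad \det\rho(g)=1+2^k\tr B(g)+2^{2k}\det B(g),
\]
so $F_{\p}(1)=1-\tr\rho(\frobp)+\det\rho(\frobp)=2^{2k}\det B(\frobp)$. Hence $F_{\p}(1)\bmod 2^{2k+1}$ recovers $\det B(\frobp)\bmod2$, while $F_{\p}(0)\bmod 2^{k+1}$ recovers $\tr B(\frobp)\bmod2$; together these read off the characteristic polynomial of $\bar B:=B\bmod2$ on each Frobenius. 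Since $\rho\bmod 2^k$ is trivial, the map $\bar B\colon G_K\to M_2(\F_2)$ is actually a group homomorphism (the cross term in $\rho(gh)$ has valuation $\ge 2k\ge k+1$), so it factors through a finite elementary abelian $2$-extension of~$K$ inside~$L_S$. A Brauer--Nesbitt argument on the semisimplification of $\bar B$ shows that the characteristic polynomial of $\bar B(\frobp)$ for $\p$ in a Chebotarev-full set $T_2$ determines $\bar B$ up to equivalence, hence determines $\rho\bmod2^{k+1}$ up to equivalence. The criterion for an equivalent lift with trivial residual representation (the $k=0$ case) is extracted the same way, and the final bullet is immediate: if $F_{\p}(t)=(t-1)^2$ for all $\p\in T_2$ then by Brauer--Nesbitt and Chebotarev $\rho^{\mathrm{ss}}$ has the same trace and determinant as the trivial representation, hence equals it.

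The main obstacle is the explicit construction of the sets $T_0$, $T_1$, $T_2$: one must produce them depending only on~$K$ and~$S$ (not on~$\rho$), and prove that the finitely many mod-$2^n$ reductions and characters one needs to separate are actually separated by the traces and determinants of Frobenii in~$T_i$. Everything else is linear algebra over $\Z_2$ together with a judicious use of Chebotarev; the core work, which the subsequent sections carry out, is in bounding the finite extensions in play (via ramification restrictions and the structure of $\gl_2(\F_2)$) and in checking that $F_{\p}(0)$ and $F_{\p}(1)$ to the stated $2$-adic precision suffice. Once those ingredients are in hand, the theorem is the direct concatenation of the section-by-section conclusions.
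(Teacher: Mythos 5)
Your reductions for parts (1) and (2) match the paper's: part (1) is the content of Theorem~\ref{thm:reducibleresidual} (the finitely many $C_3$ and $S_3$ extensions unramified outside~$S$ are separated by a distinguishing set $T_0$, and $F_{\p}(1)\equiv\tr\rhobar(\frobp)\pmod2$ detects order-$3$ Frobenii), and part (2) is Theorem~\ref{determinante}, where your phrase ``Frobenius images topologically generate the target'' is made precise by the filtration argument: $\Z_2^*$ is pro-$2$, so a nontrivial character first becomes visible through an additive quadratic character, which is already killed by a linearly independent set $T_1$ of only $r=\dim_{\F_2}K(S,2)_u$ primes.

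The gap is in part (3). Writing $\rho=\II+2^k\mu$ you correctly obtain $F_{\p}(1)=2^{2k}\det\mu(\frobp)$ and that $\mu\bmod2$ is an additive homomorphism $G_K\to M_2(\F_2)$, i.e.\ four quadratic characters $\chi_a,\chi_b,\chi_c,\chi_d$. But ``a Brauer--Nesbitt argument on the semisimplification of $\bar B$'' does not apply: $\bar B$ is a homomorphism into the \emph{additive} group $M_2(\F_2)$, not a representation into $\gl_2$, and $\det\bar B=\chi_a\chi_d+\chi_b\chi_c$ is a \emph{quadratic}, not linear, function of the unknown characters. Recovering the characters from values of this quadratic form is exactly the content of Sections~\ref{sec:small-large} and~\ref{sectionLargeIsogenyClasswidthat2}: one evaluates it on a quadratically independent set $T_2$ of $r(r+1)/2$ primes, forms the matrix $\WW=(x_iy_j+x_jy_i)$, and reads off the exponent vectors up to the unavoidable $S_3$/isogeny symmetries; moreover, in the degenerate case $\WW=\0$ (one character trivial) the values $F_{\p}(1)\bmod2^{2k+1}$ for \emph{all} primes are provably insufficient to pin down $\Delta_b$, and one must pass to $F_{\p}(1)\bmod2^{2k+2}$ and the modified test functions $\tilde t_{2k+1}$ of Section~\ref{section1mod2k1}. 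Likewise the final bullet is not ``immediate'': Brauer--Nesbitt plus Chebotarev would require $F_{\p}(t)=(t-1)^2$ on a density-one set, whereas the hypothesis is only on the finite set $T_2$; the paper bridges this in Section~\ref{sec:triviality} by a descending induction (Proposition~\ref{theorem3.4.3}), showing that the exact equalities $\det=1$ on $T_1$ and $\tr=2$ on $T_2$ propagate triviality from modulo $2^k$ to modulo $2^{k+1}$, so an irreducible $\rho$ would admit no maximal such $k$. These arguments must be supplied; the general principles you invoke do not cover the residually reducible case.
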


In each section we give examples to illustrate the methods, first from
elliptic curves defined over~$\Q$, and then in the final section, we
give two examples arising from Bianchi modular forms, and elliptic
curves over imaginary quadratic fields.  In the examples we refer to
elliptic curves and Bianchi modular forms using their LMFDB labels (see
\cite{lmfdb}) giving links to the relevant object's home pages at
\url{www.lmfdb.org.}

\subsubsection*{Remarks on complexity}
Although Theorem~\ref{thm:main} only states the existence of sets of
primes with certain properties, we will provide algorithms to compute
these, which we have implemented in order to produce examples (see
below).  It is natural, therefore, to ask about the complexity of
these algorithms.  We will not make a precise statement here: as with
essentially all algorithms in algebraic number theory, our algorithms
are exponential in the size of the input, as they require basic
knowledge of the ground field $K$ such as its rings of integers, class
group and unit group.  Computing these from a polynomial defining $K$
was shown to be exponential by Lenstra in \cite{MR1129315}.  Secondly,
our residual reducibility test requires us to be able to enumerate all
extensions of~$K$ unramified outside~$S$ and with Galois group $C_2$,
$C_3$, or $S_3$.  As this is a standard problem we do not give details
of this here, but note that except for fields of small degree and
discriminant, and small sets of primes~$S$, this is likely to be the
slowest step in the overall algorithm.  Computing the $2$-Selmer group
$K(S,2)$ of a number field~$K$ (see (\ref{definitionKS2}) below) can
be highly non-trivial, even for fields~$K$ of moderate degree and
assuming the Generalised Riemann Hypothesis.  Lastly, even if all the
necessary arithmetic data for $K$ is provided as part of the input,
our algorithms rely on being able to find primes satisfying the
conditions for the sets~$T_i$.  In all cases, there are infinitely
many primes with the desired properties, and below we give the
(positive) Dirichlet density of the sets concerned as an informal
indication of how hard finding the primes will be.  Explicit estimates
exist (at least for $K=\Q$) for how large the smallest primes with the
desired property may be, but in practice, for examples where the
previous steps are possible in reasonable time, we are able to find
these primes easily.  Both the number of primes in the sets $T_i$ and
their size (or norm) are relevant in applying these algorithms, since
in practice the work which the Black Box needs to carry out can be
considerable\footnote{For example, this would be the case for the
  examples in \cite{pacetti}, where the second author provided Hecke
  eigenvalues for certain Bianchi modular forms at primes required by
  the authors of~\cite{pacetti}, which included non-principal primes
  of quite large norm.}.

\subsubsection*{Implementation}
We have implemented all the algorithms described in the paper in
\Sage\ (see \cite{sage}).  The code, some of which will be submitted
for inclusion into a future release of \Sage, is available at
\cite{BBGR2ArXiV}. This includes general-purpose code for computing
the test sets $T_0$, $T_1$ and $T_2$ from a number field~$K$ and a
set~$S$ of primes of~$K$, and also worked examples which reproduce the
examples we give in the text.

\section{Background on Galois representations}\label{sec:background}

Fix once and for all a number field $K$ and a finite set $S$ of primes
of $K$.

\begin{definition}
An $\ell$-adic Galois representation over $K$ is a continuous
homomorphism $\rho\colon G_{K}\rightarrow \Aut(V)\cong
\gl_{2}(\Q_{\ell})$, where $V$ is a finite-dimensional vector space
over $\Q_{\ell}$.  Such a representation is said to be
\emph{unramified outside $S$}, if its restriction to the inertia
subgroup at each $\p\notin S$ is trivial.
\end{definition}
We do not assume that the representation $\rho$ is irreducible.

The condition that $\rho$ is unramified outside $S$ means that for
each $\p\notin S$, it factors through the Galois group~$\gal(L/K)$ of
the maximal extension~$L$ of~$K$ unramified at~$\p$.  Since $L/K$ is
unramified at~$\p$, there is a well-defined conjugacy class of
Frobenius automorphisms at $\p$, denoted $\frob{\p}$, in $\gal(L/K)$,
so that for all $\sigma\in\frob\p$,  the values of $\rho(\sigma)$
are conjugate in~$\Aut(V)$ and hence the characteristic polynomial of
$\rho(\sigma)$ is well-defined.  By abuse of notation, we write
$\rho(\frob\p)$ for $\rho(\sigma)$ for any choice of $\sigma$ in this
class, and denote its characteristic polynomial by~$F_{\p}(t)$.
Moreover, by the \CDT, for every automorphism $\sigma\in G_{K}$ there
are infinitely many~$\p\notin S$ for which
$\rho(\sigma)=\rho(\frob\p)$.

From now on we only consider $2$-dimensional representations.
Choosing a basis for~$V$ we may express each $\rho(\sigma)$ as a
matrix, and hence consider~$\rho$ to be a matrix representation
$G_K\to\gl_2(\Q_\ell)$.  Moreover with different choices of bases we obtain equivalent matrix representations.  For $\sigma\in G_K$ define
$F_\sigma(t)$ to be the characteristic polynomial of~$\rho(\sigma)$,
which is a well-defined monic quadratic polynomial in $\Z_{\ell}[t]$,
and for each prime $\p\notin S$ we set $F_\p=F_{\frobp}$, the
\emph{Frobenius polynomial} at~$\p$, which is also well-defined:
\begin{align}\label{ecuacionF(X)}
    F_{\p}(t) &= \det(\rho(\frobp)-t \II)\nonumber\\
    &= t^{2}-\tr(\rho(\frobp))t + \det(\rho(\frobp)) \in \Z_{\ell}[t].
\end{align}
The fact that these polynomials have integral coefficients follows
from the existence of a stable lattice in~$V$, as we recall below.
The information about the representation~$\rho$ that we assume will be
provided consists of the set~$S$ and the values of
$\det(\rho(\sigma))$ and $\tr(\rho(\sigma))$ for $\sigma=\frob{\p}\in
G_{K}$ and $\p\not\in S$.  We encapsulate this setup as an oracle, or
\emph{Black Box}:

\begin{definition}
An $\ell$-adic \emph{Black Box Galois representation} over $K$ with
respect to~$S$ is an oracle which, on being presented with a
prime~$\p$ of $K$, responds with either ``ramified'' if $\p\in S$, or
with the value of the quadratic Frobenius polynomial $F_{\p}(t)$ in
$\Z_{\ell}[t]$ for $\p\notin S$.
\end{definition}
Equivalently, the Black Box delivers for each~$\p\notin S$ the values of
the \emph{trace} $\tr(\rho(\frobp))\in\Z_{\ell}$ and
the \emph{determinant} $\det(\rho(\frobp))\in\Z_{\ell}^*$.

\subsection{Stable lattices and the Bruhat-Tits tree}
It is well known \cite[p.1]{S1} that continuity of~$\rho$ implies the
existence of at least one \emph{stable lattice} $\Lambda$, i.e., a
free $\Z_{\ell}$-submodule of $V$ of full rank such that
$\rho(\sigma)(\Lambda)\subseteq\Lambda$ for all~$\sigma\in G_K$.  With
respect to a $\Z_{\ell}$-basis of~$\Lambda$, $\rho$ determines an
\emph{integral matrix representation} $\rho_{\Lambda}\colon
G_{K}\rightarrow \gl_{2}(\Z_{\ell})$.  Any lattice homothetic to a
stable lattice is also stable and induces the same integral matrix
representation.  Changing to a different $\Z_{\ell}$-basis
of~$\Lambda$ gives rise to an equivalent integral representation
(conjugate within $\gl_{2}(\Z_{\ell})$).  The existence of a stable
lattice shows that the Frobenius polynomials~$F_{\p}(t)$ have
coefficients in~$\Z_{\ell}$.

If we change to a different stable lattice $\Lambda'\subset V$ which
is not homothetic to~$\Lambda$, however, the integral
representation~$\rho_{\Lambda'}$ we obtain, while rationally
equivalent to~$\rho_{\Lambda}$ (conjugate within
$\gl_{2}(\Q_{\ell})$), is not necessarily integrally equivalent
(conjugate within $\gl_{2}(\Z_{\ell})$).  Integral representations
related in this way (rationally but not necessarily integrally
equivalent) are called \emph{isogenous}.  As we are assuming that the
only information we have about~$\rho$ (for fixed $K$ and~$S$) are the
characteristic polynomials of $\rho(\frobp)$ for primes outside~$S$
provided by the Black Box, we cannot distinguish isogenous integral
representations, but still hope to be able to say something about the
set of all of those isogenous to a given one.

\begin{definition}
The \emph{isogeny class} of $\rho$ is the set of pairs
$(\Lambda,\rho_{\Lambda})$ where $\Lambda$ is a stable lattice and
$\rho_{\Lambda}$ the induced map $G_{K}\to\Aut(\Lambda)$, modulo the
equivalence relation which identifies homothetic lattices.
\end{definition}

For each choice of stable lattice and induced integral representation
we can define its associated residual representation.

\begin{definition}
Let $\rho\colon G_K\to\Aut(V)$ be an $\ell$-adic Galois
representation.  To each stable lattice $\Lambda\subset V$ the
associated \emph{residual representation} $\overline{\rho_{\Lambda}}$
is the composite map
$G_K\to\Aut(\Lambda)\to\Aut(\Lambda\otimes_{\Z_{\ell}}\F_{\ell})$.

In matrix terms, $\overline{\rho}_{\Lambda}\colon
G_K\to\gl_2(\F_{\ell})$ is obtained by composing the integral matrix
representation $\rho_{\Lambda}\colon G_{K}\to\gl_2(\Z_{\ell})$ with
reduction modulo $\ell$.
\end{definition}

We cite the following facts (see \cite[p.3]{S1} for the second one):
\begin{itemize}
  \item $\rho$ is irreducible if and only if the number of stable
    lattices, up to homothety, is finite; that is, if and only if the
    isogeny class of~$\rho$ is finite.
  \item Let $\Lambda$ be any stable lattice.  Then the residual
    representation $\overline{\rho_{\Lambda}}$ is irreducible
    over~$\F_{\ell}$ if and only if $\Lambda$ is the only stable lattice up
    to homothety.  In other words, the residual representation is
    irreducible if and only if the isogeny class consists of a single
    element, in which case there is of course only one residual
    representation up to conjugacy in $\gl_2(\F_{\ell})$.
\end{itemize}
From the second fact we see that either all the residual
representations are reducible, or none of them are; in the latter case
there is only one stable lattice up to homothety anyway.  Thus it
makes sense to describe~$\rho$ as ``residually reducible'' or
``residually irreducible'' respectively.

Recall that the $\ell$-adic Bruhat-Tits tree is the infinite graph
whose vertices are the homothety classes of lattices in
$V\cong\Q_{\ell}^2$, with two vertices joined by an edge if their
classes have representative lattices $\Lambda_1$, $\Lambda_2$ such
that $\Lambda_1$ contains $\Lambda_2$ with index~$\ell$.  (This is a
symmetric relation since then $\Lambda_2$ contains $\ell\Lambda_1$
with index~$\ell$.)  Each vertex has degree exactly~$\ell+1$.
Restricting to lattices which are stable under our
representation~$\rho$, we obtain the following:

\begin{definition}
The \emph{stable Bruhat-Tits tree} or \emph{isogeny graph} of an
$\ell$-adic representation $\rho$ is the full subgraph $\BT(\rho)$ of
the Bruhat-Tits tree whose vertices are stable lattices.
\end{definition}

It is easy to see that if $[\Lambda]$ and $[\Lambda']$ are stable
homothety classes, all vertices in the unique path between them are
also stable: we may choose representatives $\Lambda$, $\Lambda'$ in
their homothety classes such that $\Lambda\subseteq\Lambda'$ and the
quotient~$\Lambda'/\Lambda$ is cyclic, of order $\ell^n$ for
some~$n\ge0$.  Now this quotient has a unique subgroup of each
order~$\ell^k$ for $0\le k\le n$, corresponding to a
lattice~$\Lambda''$ with $\Lambda\subseteq\Lambda''\subseteq\Lambda'$,
and by uniqueness, each such $\Lambda''$ is stable.

Hence the stable Bruhat-Tits tree is indeed
a tree. Its vertex set is the isogeny class of~$\rho$ as defined
above, and we may refer to its edges as \textit{$\ell$-isogenies}.  Given two
adjacent stable lattices,  we may choose bases so that the
associated integral matrix representations are conjugate
within~$\gl_2(\Q_{\ell})$ via the matrix
$\begin{pmatrix}\ell&0\\0&1 \end{pmatrix}$.  In $\BT(\rho)$ it is no
longer the case that every vertex has degree~$\ell+1$; considering the
action of~$\gl_2(\F_\ell)$ on~$\P^1(\F_\ell)$ we see that for $\ell=2$
the possible degrees are~$0$, $1$ and~$3$ while for $\ell\ge3$
the possible degrees are~$0$, $1$, $2$ and~$\ell+1$.

We define the \emph{width} of the isogeny class~$\BT(\rho)$ to be the
length of the longest path in~$\BT(\rho)$; by the facts above, this is
finite if and only if $\rho$ is irreducible, and is positive if and
only if $\rho$ is residually reducible.

\section{Characters and quadratic extensions}\label{sec:characters}

The problem of distinguishing continuous $2$-adic characters
($1$-dimensional representations) $\chi\colon G_K\to\Z_2^*$ reduces to
that of distinguishing quadratic extensions of~$K$, since $\Z_2^*$ is
an abelian pro-$2$-group.  Moreover, the image of $\rho$
in~$\gl_2(\Z_2)$ is itself a pro-$2$-group in the case that the
residual representation is reducible, so the technique we describe in
this section will be used later to study both $\det\rho$ and $\rho$
itself in the residually reducible case.

There are only finitely many quadratic extensions $L$ of $K$
unramified outside $S$; their compositum is the maximal extension
of~$K$ unramified outside $S$ and with Galois group an elementary
abelian $2$-group.  Each has the form $L=K(\sqrt{\Delta})$ for a
unique $\Delta\in K(S,2)\leq K^{*}/(K^{*})^{2}$, where $K(S,2)$ is the
subgroup (often called the $2$-Selmer group of~$K$, or of $K^*$) given
by
\begin{align}\label{definitionKS2}
K(S,2)=\{a\in K^{*}/(K^{*})^{2}:\ord_{\p}(a)\equiv 0\pmod2\text{ for all }\p\not\in S\}.
\end{align}
Moreover, when $S$ contains all primes of~$K$ dividing~$2$,
every extension $K(\sqrt{\Delta})$ with $\Delta\in K(S,2)$ is
unramified outside~$S$.  In general the~$\Delta$ such that
$K(\sqrt{\Delta})$ is unramified outside $S$ form a
subgroup~$K(S,2)_u$ of~$K(S,2)$.  We will call elements of~$K(S,2)_u$
\emph{discriminants}, and always regard two discriminants as equal
when their quotient is a square in~$K^*$.  It is convenient here
to consider $\Delta=1$ as a discriminant, corresponding to the trivial
extension $L=K$.

The group of discriminants~$K(S,2)_u$ is an elementary abelian
$2$-group, of cardinality $2^{r}$ with $r\geq 0$, and may also be
viewed as an $r$-dimensional vector space over $\F_{2}$.
Fixing a basis $\{\Delta_{i}\}_{i=1}^{r}$ for $K(S,2)_u$, we may identify
\begin{align}\label{isomorphismKS2F2}
\F_{2}^r&\leftrightarrow K(S,2)_u\nonumber\\
\x&\leftrightarrow \prod_{i=1}^{r}\Delta_{i}^{x_{i}},
\end{align}
where $\x=(x_{i})_{i=1}^{r}$.

Each prime $\mathfrak{p}\notin S$ determines a linear map
\begin{align*}
\alpha_{\mathfrak{p}}\colon K(S,2)_u\to\F_{2}
\end{align*}
defined by $\alpha_{\p}(\Delta)=[\Delta\mid\mathfrak{p}]$, where we set
\begin{align*}
  [\Delta\mid\mathfrak{p}]&=
  \begin{cases}0\pmod2 & \text{if $\mathfrak{p}$ splits in $K(\sqrt{\Delta})$ or $\Delta=1$}\\
               1\pmod2 &\text{if $\mathfrak{p}$ is inert in $K(\sqrt{\Delta})$}.  \end{cases}
\end{align*}
Linearity follows from the relation
$[\Delta\Delta'|\p]=[\Delta|\p]+[\Delta'|\p]$.

For any prime~$\p\notin S$, we define
\begin{align}\label{definitionI(p)}
I(\p)=\{i: [\Delta_{i}\mid\p]=1\} \subseteq \{1,2,\dots,r\}.
\end{align}
Conversely, for each subset $I\subseteq\{1,...,r\}$, we denote by
$\p_{I}$ any prime such that~$I(\p_I)=I$, so that
\begin{align}\label{propiedaddeladelta}
[\Delta_{i}\mid \p_{I}]=1\Leftrightarrow i\in I.
\end{align}
When $I=\{i\}$ or $I=\{i,j\}$ with $i\not=j$, we simply write
$\p_{i}=\p_{\{i\}}$ and $\p_{ij}=\p_{\{i,j\}}$.  By the \CDT\ applied
to the compositum of the extensions~$K(\sqrt{\Delta})$ for $\Delta\in
K(S,2)_u$, the set of primes of the form~$\p_I$ has density $1/2^{r}$
for each subset~$I$, and in particular is infinite.

Each set of primes of the form~$\{\p_i\mid 1\le i\le r\}$ determines a
basis $\{\alpha_{\p_i}\mid 1\le i\le r\}$ for the dual space
$K(S,2)_u^*=\Hom_{\F_2}(K(S,2)_u,\F_2)$, and may be used to
distinguish between two characters unramified outside $S$.  More
generally we make the following definition.

\begin{definition}\label{deflinearlyindependent}
A set $T_{1}$ of primes of $K$ is \emph{linearly independent with
  respect to} $S$ if $T_1$ is disjoint from~$S$ and the linear
functions $\{\alpha_{\p}\mid\p\in T_{1}\}$ form a basis for the dual
space~$K(S,2)_u^*$.
\end{definition}

As observed above, such a set always exists, for example any set of
the form
\begin{align}\label{specialT1}
\{\p_{1},...,\p_{r}\},
\end{align}
defined above with respect to a basis of~$K(S,2)_u$, is a linearly
independent set of primes. We fix once and for all a linearly
independent set of primes, and denote it by $T_{1}$, and can assume
that $\{\alpha_{\p}\mid\p\in T_1\}$ is a dual basis for the chosen
basis $\{\Delta_i\mid 1\le i\le r\}$ for~$K(S,2)_u$.  In practice this
is most easily done by computing $T_1=\{\p_1,\dots,\p_r\}$ first, and
then taking $\{\Delta_i\}$ to be the basis dual to~$\{\alpha_{\p_i}\}$
(see Algorithm~\ref{algoritmoT1} below).  We then see by
$(\ref{propiedaddeladelta})$ that for all~$I\subseteq\{1,2,\dots,r\}$,
\begin{align}\label{alphaij=alphaialphaj}
\alpha_{\p_{I}}(\Delta)=\sum_{i\in I}\alpha_{\mathfrak{p}_{i}}(\Delta).
\end{align}

\begin{algorithm}[H]\label{algoritmoT1}
\caption{To determine a linearly independent set $T_{1}$ of primes of $K$.} 
\SetKwInOut{Input}{Input}
\SetKwInOut{Output}{Output}
\Input{A number field $K$.\\
       A finite set $S$ of primes of $K$.}
\Output{$T_{1}=\{\p_1,\dots,\p_r\}$, a set of primes of $K$ linearly
  independent with respect to~$S$, and a basis for $K(S,2)_u$ dual to $T_1$.}

Let $\{\Delta_{i}\}_{i=1}^{r}$ be  a basis for $K(S,2)_u$\;
Let $T_{1}=\{\}$\;
Let $A$ be a $0\times r$ matrix over $\F_{2}$\;
\While{$\emph{rank}(A)<r$}
{
Let $\p$ be a prime not in $S\cup T_{1}$\;
Let $\vv=([\Delta_{1}|\p],...,[\Delta_{r}|\p])$\;
\If{$\vv$\emph{ is not in the row-space of }$A$}
	{Let $A=A+\vv$; ~~~~\# i.e., adjoin $\vv$ as a new row of $A$\\
	Let $T_{1}=T_{1}\cup \{\p\}$.
	}
}
Let $\tilde\Delta_j=\prod_i\Delta_i^{b_{ij}}$ for $1\le i\le r$, where
 $(b_{ij})=A^{-1}$\;
\Return $T_{1}$ and $\{\tilde\Delta_i\mid1\le i\le r\}$.
\end{algorithm}

In line~5 of the algorithm, and similarly with later algorithms to
determine other special sets of primes, we systematically consider all
primes of~$K$ in turn, for example in order of norm, omitting those
in~$S$.

In line~10, we adjust the initial basis for $K(S,2)_u$ to one which is
dual to the computed set~$T_1$; this is more efficient than fixing a
basis for $K(S,2)_u$ and looking for primes which form a dual basis.

\begin{remark}
  Finding $K(S,2)$ is implemented in standard software packages.  In
  \Sage, {\tt K.selmergroup(S,2)} returns a basis, while in \Magma\ one
  obtains $K(S,2)$ as abelian group via {\tt pSelmerGroup(2,S)}.
  See \cite{sage} or \cite{magma} respectively.

  Methods for computing $K(S,2)$ are based on the short exact
  sequence
  \[
  1 \to \oo_{K,S}^*/(\oo_{K,S}^*)^2 \to K(S,2) \to \Cl_{K,S}[2] \to 1,
  \]
  where $\oo_{K,S}^*$ is the group of $S$-units and $\Cl_{K,S}[2]$ is
  the $2$-torsion subgroup of the $S$-class group~$\Cl_{K,S}$ of~$K$.
  They therefore rely on being able to compute the unit group and
  class group.
\end{remark}


\subsection{Identifying quadratic extensions}\label{subsec:quadratics}
As an easy example of how to use a set~$T_1$ of primes linearly
independent with respect to~$S$, we may identify any extension $L/K$
known to be of degree at most~$2$ and unramified outside~$S$.
Enumerating $T_1=\{\p_1,\dots,\p_r\}$ and the dual basis
$\{\Delta_1,\dots,\Delta_r\}$ for~$K(S,2)_u$, set
\[
\Delta = \prod_{i=1}^{r}\Delta_i^{[L|\p_i]},
\]
where for~$\p\notin S$ we set $[L|\p]=0$ (respectively~$1$) if $\p$ is
split (respectively, is inert) in~$L$.  Then $L=K(\sqrt{\Delta})$ (or
$L=K$ if $\Delta=1$).  The proof is clear from the fact that $L$ is
uniquely determined by the set of primes which split in $L/K$.  In
particular, $L=K$ if and only if all primes in~$T_1$ split.


\subsection{1-dimensional Galois representations}\label{1dimensionalgal}

We first consider additive quadratic characters~$\alpha\colon
G_K\to\F_2$ which are unramified outside~$S$, and see that a linear
independent set~$T_1$ can determine whether such a character is
trivial, and more generally when two are equal.

\begin{lemma}\label{quadraticchar}
Let $\alpha, \alpha_1, \alpha_2\colon G_{K}\rightarrow\F_{2}$ be
additive quadratic characters unramified outside $S$.
\begin{enumerate}
  \item If $\alpha(\frob{\mathfrak{p}})=0$ for all $\p\in T_{1}$, then
    $\alpha=0$.
  \item $\alpha_1=\alpha_2$ if and only if
    $\alpha_1(\frobp)=\alpha_2(\frobp)$ for all~$\p\in T_1$.
\end{enumerate}
\end{lemma}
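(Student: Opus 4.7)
The plan is to reduce the lemma to the linear-independence property already built into the set $T_1$. An additive quadratic character $\alpha\colon G_K\to\F_2$ unramified outside $S$ factors through $\gal(L_\alpha/K)$, where $L_\alpha=K^{\ker\alpha}$ is either $K$ itself or a quadratic extension of $K$ unramified outside~$S$. In the latter case $L_\alpha=K(\sqrt{\Delta})$ for a unique nontrivial $\Delta\in K(S,2)_u$, and we set $\Delta=1$ when $\alpha=0$. Thus $\alpha\mapsto\Delta$ gives a bijection between additive quadratic characters of $G_K$ unramified outside $S$ and elements of $K(S,2)_u$.

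First I would verify that under this bijection, $\alpha(\frobp)$ agrees with the splitting symbol introduced before the lemma: for $\p\notin S$, the image $\alpha(\frobp)$ is $0$ when $\frobp$ is trivial in $\gal(L_\alpha/K)$, i.e.\ when $\p$ splits in $L_\alpha$ (or when $\Delta=1$), and is $1$ when $\frobp$ is nontrivial, i.e.\ when $\p$ is inert. In the notation of the paper this is exactly $\alpha(\frobp)=[\Delta\mid\p]=\alpha_\p(\Delta)$.

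Now suppose $\alpha(\frobp)=0$ for every $\p\in T_1=\{\p_1,\dots,\p_r\}$. Then $\alpha_{\p_i}(\Delta)=0$ for $i=1,\dots,r$. Since $T_1$ is linearly independent with respect to $S$, the functionals $\{\alpha_{\p_i}\mid 1\le i\le r\}$ form a basis of $K(S,2)_u^*$; hence any element of $K(S,2)_u$ annihilated by all of them is trivial. Therefore $\Delta=1$ in $K(S,2)_u$, so $L_\alpha=K$ and $\alpha=0$. This proves part (1), and the main step here is recognising that the hypothesis is exactly the statement that the coordinates of $\Delta$ with respect to the dual basis $\{\Delta_1,\dots,\Delta_r\}$ all vanish.

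Part (2) follows immediately by linearity: define $\alpha=\alpha_1+\alpha_2$ (which is still an additive quadratic character unramified outside $S$, because the collection of such characters is an $\F_2$-vector space). Then $\alpha_1(\frobp)=\alpha_2(\frobp)$ for all $\p\in T_1$ is equivalent to $\alpha(\frobp)=0$ for all $\p\in T_1$, which by part (1) forces $\alpha=0$, i.e.\ $\alpha_1=\alpha_2$; the reverse implication is trivial. The only potential subtlety in the whole argument is the initial identification of quadratic characters with elements of $K(S,2)_u$, but this is standard Kummer theory combined with the definition of $K(S,2)_u$ as the group of discriminants whose associated quadratic extension is unramified outside $S$.
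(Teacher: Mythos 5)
Your proof is correct and follows essentially the same route as the paper: identify the character with a discriminant $\Delta\in K(S,2)_u$ via the fixed field of its kernel, note that $\alpha(\frobp)=[\Delta\mid\p]$, invoke the defining (dual-basis) property of the linearly independent set $T_1$ to conclude $\Delta=1$, and reduce part (2) to part (1) by taking the difference of the two characters. The extra care you take in spelling out the bijection with $K(S,2)_u$ and the dual-basis argument is a slight expansion of what the paper leaves implicit, but the argument is the same.
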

\begin{proof}
If $\alpha\neq 0$, then the fixed field of $\ker(\alpha)$ is a quadratic
extension $K(\sqrt{\Delta})$ for some non-trivial $\Delta$ in
$K(S,2)$. But $[\Delta|\mathfrak{p}]=\alpha(\frob{\p})=0$ for all
$\p\in T_{1}$, which implies that $\Delta=1$.  For the second part,
consider $\alpha=\alpha_1-\alpha_2$.
\end{proof}

Now let $\chi\colon G_K\to\Z_2^*$ be a $2$-adic character unramified
outside $S$.  For example we may take $\chi=\det\rho$ where $\rho$ is
a $2$-adic Galois representation unramified outside $S$.  Again, to
show triviality of $\chi$, or equality of two such characters, it is
enough to consider their values on $\frobp$ for $\p\in T_1$.

\begin{theorem}\label{determinante}
Let $\chi, \chi_1, \chi_2\colon G_{K}\rightarrow\Z_{2}^*$ be
continuous characters unramified outside $S$.  Let $T_{1}$ be a
linearly independent set of primes with respect to~$S$.
\begin{enumerate}
  \item If $\chi(\frob{\mathfrak{p}})=1$ for all $\p\in T_{1}$, then
    $\chi$ is trivial.
  \item $\chi_1=\chi_2$ if and only if
    $\chi_1(\frobp)=\chi_2(\frobp)$ for all~$\p\in T_1$.
\end{enumerate}
\end{theorem}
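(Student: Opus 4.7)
The plan is to reduce the statement to the additive quadratic character case already handled in Lemma~\ref{quadraticchar}, and then bootstrap via induction on the $2$-adic precision using the filtration of $\Z_2^*$.

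First I would observe that $\Z_2^* = 1 + 2\Z_2$ is a pro-$2$-group with the descending filtration by the open subgroups $U_n := 1 + 2^n\Z_2$ for $n\ge1$, and that $\bigcap_n U_n = \{1\}$. By continuity any character $\chi\colon G_K\to\Z_2^*$ is determined by its reductions $\chi\bmod U_n$, so $\chi$ is trivial if and only if $\chi(G_K)\subseteq U_n$ for every $n\ge1$. The key arithmetic fact underlying the induction is that for $n\ge 1$ the map
\[
U_n/U_{n+1}\longrightarrow \F_2,\qquad 1+2^na\longmapsto a\bmod2,
\]
is a group isomorphism from the multiplicative quotient to $(\F_2,+)$; this follows from $(1+2^na)(1+2^nb) = 1 + 2^n(a+b) + 2^{2n}ab \equiv 1 + 2^n(a+b)\pmod{2^{n+1}}$, since $2n\ge n+1$.

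For part~(1), I would prove by induction on $n\ge1$ that $\chi(G_K)\subseteq U_n$. The base case $n=1$ is automatic since $\chi$ takes values in $\Z_2^*=U_1$. For the inductive step, assume $\chi(G_K)\subseteq U_n$, and consider the composite
\[
\bar\chi\colon G_K\xrightarrow{\ \chi\ } U_n\longrightarrow U_n/U_{n+1}\cong\F_2.
\]
By the previous observation $\bar\chi$ is an additive character, and it is unramified outside~$S$ because $\chi$ is. The hypothesis $\chi(\frob\p)=1$ for all $\p\in T_1$ gives in particular $\chi(\frob\p)\in U_{n+1}$, so $\bar\chi(\frob\p)=0$ for all $\p\in T_1$. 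Lemma~\ref{quadraticchar} then forces $\bar\chi\equiv 0$, i.e.\ $\chi(G_K)\subseteq U_{n+1}$, closing the induction. Hence $\chi(G_K)\subseteq\bigcap_n U_n=\{1\}$.

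For part~(2) I would apply~(1) to the quotient character $\chi:=\chi_1\chi_2^{-1}$, which is again continuous and unramified outside~$S$; then $\chi_1(\frob\p)=\chi_2(\frob\p)$ for $\p\in T_1$ translates to $\chi(\frob\p)=1$ for $\p\in T_1$, giving $\chi=1$ and hence $\chi_1=\chi_2$. The only substantive point to pin down carefully is the identification of $U_n/U_{n+1}$ with $(\F_2,+)$ as topological groups so that Lemma~\ref{quadraticchar} applies at each layer; once that is in place the whole argument is a clean induction, so I do not expect any further obstacle.
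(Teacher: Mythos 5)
Your proof is correct and takes essentially the same route as the paper: both reduce part (2) to part (1) via $\chi_1\chi_2^{-1}$, and both prove part (1) by passing to the successive quotients $U_k/U_{k+1}\cong\F_2$ of the filtration of $\Z_2^*$ and invoking Lemma~\ref{quadraticchar}. The only cosmetic difference is that you run an induction on the level $n$ while the paper argues by contradiction from a maximal $k$ with $\chi\equiv1\pmod{2^k}$; these are the same argument.
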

\begin{proof}
As before, the second part follows from the first on considering
$\chi=\chi_1\chi_2^{-1}$, which is again a character $ G_{K}\rightarrow
\Z_{2}^{*}$ unramified outside~$S$.

Suppose that $\chi\neq 1$. Let $k\geq 1$ be the greatest integer such
that $\chi(\sigma)\equiv 1\pmod{2^{k}}$ for all~$\sigma\in G_K$. Note
that $\chi(\sigma)\equiv 1\pmod{2}$ for all $\sigma\in G_{K}$, so $k$
does exist. We can write
$$
\chi(\sigma)\equiv 1+2^{k}\alpha(\sigma)\pmod{2^{k+1}}
$$ where $\sigma\mapsto\alpha(\sigma)$ is a non-trivial additive
quadratic character $G_{K}\rightarrow\F_{2}$. However,
$\alpha(\frob{\mathfrak{p}})\equiv 0\pmod 2$ for all
$\mathfrak{p}\in T_{1}$, since $\chi(\frobp)=1$, so by Lemma
\ref{quadraticchar} we have that $\alpha=0$, contradicting the
minimality of $k$.
\end{proof}


\section{Determining the residual representation}\label{sec:residual}

Given a Black Box Galois representation $\rho$, we would like to
determine whether its residual representations are irreducible or
reducible. Recall that this is a well-defined question, even when
there is more than one stable lattice.  In the irreducible case, we
will moreover determine the (unique) residual representation
completely, both its image (which has order~$3$ or $6$, and is
isomorphic to either $C_3$ (the cyclic group of order~$3$) or $S_3$
(the symmetric group of degree~$3$)), and the fixed field of its
kernel.  Note that $\gl_2(\F_2)\cong S_3$, the isomorphism coming from
the action of~$\gl_2(\F_2)$ on $\P^1(\F_2)$.

This is our initial step in determining the size and structure of the
attached Bruhat-Tits tree $\BT(\rho)$, as we will determine whether it
has only one vertex (and width~$0$) or is larger (positive width).

Fixing one stable lattice~$\Lambda$ with residual representation
$\rhobar_\Lambda$, we define the \emph{splitting field} of
$\rhobar_\Lambda$ to be the fixed field of its kernel.  This is an
extension~$L$ of~$K$ which is unramified outside~$S$ such that
$\gal(L/K)\cong\rhobar_\Lambda(G_K)\le\gl_2(\F_2)$, hence $\gal(L/K)$
is isomorphic to one of: $C_1$ (the trivial group), $C_2$ (cyclic of
order~$2$), $C_3$ or~$S_3$.  The first two cases occur
when~$\rhobar_\Lambda$ is reducible, in which case a different choice
of stable lattice may change the image between being trivial and of
order~$2$, while in the residually irreducible case the image and
kernel are both well-defined.

We now show how to identify the residual splitting field, leaving
until a later section the task of saying more in the reducible case.

\subsection{Identifying cubic extensions}\label{subsec:cubics}
The key to our method is that there are only finitely many Galois
extensions~$L/K$, unramified outside~$S$, and with Galois group
either~$C_3$ or~$S_3$, and we may determine these algorithmically.  We
will not discuss here details of this, except to remark that in the
$S_3$ case we can first construct all possible quadratic extensions
$K(\sqrt{\Delta})$ using $\Delta\in K(S,2)_u$ as in the previous
section, and then use either Kummer Theory or Class Field Theory to
construct all cyclic cubic extensions of $K$ or~$K(\sqrt{\Delta})$.
Full details of the Kummer Theory method, using special cases of
results by Cohen \cite{cohen}, can be found in \cite[\S3]{Koutsianas}
(see also Koutsianas's thesis \cite{aggelos}); we have an
implementation of this method in~\Sage.  An alternate implementation,
using Class Field Theory, was written in \Pari\ by Pacetti, as used in
\cite{pacetti} in the case where $K$ is an imaginary quadratic field.
These implementations were used for the examples below.

For present purposes, we assume that, given $K$ and~$S$, we can write
down a finite set~$\FF$ of irreducible monic cubic polynomials in
$\oo_{K}[x]$, whose splitting fields are the Galois extensions $L/K$
unramified outside $S$ with $\gal(L/K)$ isomorphic to either $S_{3}$
or $C_{3}$. Note that the discriminants of the polynomials in~$\FF$
may be divisible by primes not in~$S$, and these primes will need to
be avoided, so we denote by~$S(\FF)$ the union of~$S$ with all prime
divisors of~$\{\disc(f)\mid f\in\FF\}$.

We can characterise the fields $L$ by examining the splitting
behaviour of primes $\p\not\in S(\FF)$, which depends only on the
factorisation of the respective $f\in\FF$ modulo~$\p$.

\begin{definition}\label{functionlambda}
For a monic cubic polynomial $f\in\oo_{K}[x]$ and prime
$\mathfrak{p}\ndiv\disc f$,
define
$$
   \lambda(f,\mathfrak{p})=\begin{cases} 1& \text{if }f\text{ is
  irreducible mod } \mathfrak{p};\\ 0& \text{otherwise.} \end{cases}
$$
\end{definition}

This definition is motivated by the observation that elements
of~$\gl_2(\F_2)$ have trace~$1$ (respectively, $0$) if their order
is~$3$ (respectively, $1$ or~$2$), combined with the following result
from elementary algebraic number theory.
\begin{lemma}\label{lemmafunctionlambda}
Let $f$ be an irreducible monic cubic polynomial in
$\mathcal{O}_{K}[x]$ with splitting field $L$. Then for $\p\ndiv
\disc{f}$,
$$\lambda(f,\p)=\begin{cases}
1& \text{if }\frob{\p} \text{ has order } 3 \text{ in } \gal(L/K)\\
0& \text{if }\frob{\p} \text{ has order } 1 \text{ or } 2 \text{ in } \gal(L/K).\\  \end{cases}$$
\end{lemma}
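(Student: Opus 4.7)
The plan is to invoke the standard correspondence between the factorization of $f$ modulo an unramified prime and the cycle structure of the Frobenius automorphism acting on the roots in~$L$. Fix the three roots $\alpha_1,\alpha_2,\alpha_3\in L$ of~$f$, and choose a prime~$\pp$ of~$L$ above~$\p$. Since $\p\nmid\disc f$, the reduction $\overline f=f\bmod\p$ is separable, and reduction modulo~$\pp$ restricts to a bijection from $\{\alpha_1,\alpha_2,\alpha_3\}$ onto the roots of $\overline f$ in the residue field of~$\pp$. The decomposition group at~$\pp$ surjects onto $\gal((\oo_L/\pp)/(\oo_K/\p))$, and under this identification $\frob_\p$ acts on the reduced roots in the same way as it acts on the $\alpha_i$ themselves.

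From this one obtains the classical Dedekind-Kummer statement that the degrees of the irreducible factors of $\overline f$ over $\oo_K/\p$ are precisely the cycle lengths of $\frob_\p$ acting on $\{\alpha_1,\alpha_2,\alpha_3\}$. Under the faithful embedding $\gal(L/K)\hookrightarrow S_3$ induced by this action, an element has cycle type $(3)$, $(2,1)$, or $(1,1,1)$, corresponding respectively to orders $3$, $2$, and~$1$. Hence $\overline f$ is irreducible if and only if $\frob_\p$ is a $3$-cycle, equivalently has order~$3$ in $\gal(L/K)$; in every other case $\overline f$ has a linear factor and $\frob_\p$ has order~$1$ or~$2$. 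This gives $\lambda(f,\p)=1$ exactly when $\frob_\p$ has order~$3$, as claimed.

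The only substantive ingredient is the cycle-type/factorization correspondence, which relies on $\p\nmid\disc f$ to guarantee separability of $\overline f$ and the lifting of its roots; everything else is the elementary fact that in $S_3$ the order of an element coincides with the length of its longest cycle. The argument is uniform in the two cases $\gal(L/K)\cong S_3$ and $\gal(L/K)\cong C_3$: in the latter, transpositions simply do not occur, so $\overline f$ is either irreducible or splits completely, and the stated equivalence continues to hold without modification.
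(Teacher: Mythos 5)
Your proof is correct: the cycle-type/factorization correspondence for the separable reduction $f\bmod\p$, combined with the fact that in $S_3$ the order of an element equals its longest cycle length, is exactly the standard argument. The paper itself offers no proof of this lemma, citing it only as "elementary algebraic number theory," so your write-up simply supplies the intended justification; the one hypothesis you use implicitly is that $\p\nmid\disc f$ forces $\p$ to be unramified in $L$, so that $\frob\p$ is well-defined and the decomposition group maps isomorphically (not merely surjectively) onto the residue Galois group.
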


Hence, if our Black Box representation~$\rho$ has irreducible residual
representation with residual splitting field defined by the cubic~$f$,
we will have
\[
   \lambda(f,\p) \equiv \tr(\rho(\frob{\p}))\pmod
   2\qquad\forall\p\notin S.
\]
This underlies our algorithm for testing residual irreducibility: see
Proposition~\ref{lemmac3s3}.  To this end, we now define a finite set
of primes which can distinguish between the possible splitting
fields~$L$.

\begin{definition}\label{defT0}
Let $\FF$ be a set of monic cubic polynomials in $\oo_K[x]$ whose
splitting fields are exactly the $S_{3}$ and $C_{3}$ extensions of $K$
unramified outside~$S$. An ordered set of primes
$T_{0}=\{\p_{1},...,\p_{t}\}$ of $K$ is a \emph{distinguishing set}
for $(\FF,S)$ if
\begin{enumerate}[$(1)$]
\item $T_{0}\cap S(\FF)=\emptyset$ (equivalently, $T_{0}\cap
  S=\emptyset$ and ${\p}\ndiv\disc f$ for all $\p\in T_{0}$ and $f\in
  \FF$);
\item the vectors
  $(\lambda(f,\p_{1}),...,\lambda(f,\p_{t}))\in\F_{2}^{t}$ for $f\in
  \FF$ are distinct and non-zero.
\end{enumerate}
\end{definition}
We will write
$\vv(f,T_{0})=(\lambda(f,\p_{1}),...,\lambda(f,\p_{t}))$ when
$T_{0}=\{\p_{1},...,\p_{t}\}$.
\begin{lemma}\label{lemmaexistencia}
A distinguishing set of primes for $(\FF,S)$ exists.
\end{lemma}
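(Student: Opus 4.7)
The plan is to apply the Chebotarev density theorem to find primes that distinguish the splitting fields $\{L_f : f \in \FF\}$, and then to build $T_0$ greedily. For each $f \in \FF$, write $L_f$ for its splitting field and $A_f = \{\p \notin S(\FF) : \lambda(f, \p) = 1\}$; by Lemma~\ref{lemmafunctionlambda}, $\p \in A_f$ iff $\frob_\p$ has order $3$ in $\gal(L_f/K)$. Since both $C_3$ and $S_3$ contain elements of order $3$, Chebotarev gives $A_f$ positive Dirichlet density ($2/3$ in the $C_3$ case, $1/3$ in the $S_3$ case), so each $A_f$ is infinite.

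The crux will be to show that for distinct $f, f' \in \FF$ the symmetric difference $A_f \triangle A_{f'}$ is also infinite. Setting $M = L_f L_{f'}$ with Galois group $G$ and normal subgroups $H_f = \gal(M/L_f)$ and $H_{f'} = \gal(M/L_{f'})$, let $X_f = \{g \in G : gH_f \text{ has order } 3 \text{ in } G/H_f\}$, a union of conjugacy classes in $G$. By Chebotarev it suffices to prove $X_f \neq X_{f'}$. If one of the splitting fields has Galois group $C_3$ and the other $S_3$, the densities $|X_f|/|G|$ and $|X_{f'}|/|G|$ already disagree. Otherwise the two splitting fields are of the same type, and the claim follows from a short case analysis on the possibilities for $L_f \cap L_{f'}$: in the $C_3/C_3$ case one has $G = C_3 \times C_3$ with $H_f, H_{f'}$ distinct order-$3$ subgroups, so $X_f = G \setminus H_f \neq G \setminus H_{f'} = X_{f'}$; in the $S_3/S_3$ case either $L_f \cap L_{f'} = K$ (so $G = S_3 \times S_3$ with $H_f, H_{f'}$ complementary factors) or $L_f, L_{f'}$ share a quadratic subfield (so $G$ is the fiber product $S_3 \times_{C_2} S_3$ of order $18$). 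In each case one checks directly that $X_f \setminus X_{f'}$ is nonempty by exhibiting an explicit element such as $((123), e)$.

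With both facts in hand, the construction of $T_0$ will proceed greedily: start with $T_0 = \emptyset$, and while some coordinate vector $\vv(f, T_0)$ is zero, adjoin any prime drawn from $A_f$; while two vectors $\vv(f, T_0) = \vv(f', T_0)$ coincide for distinct $f, f' \in \FF$, adjoin a prime from $A_f \triangle A_{f'}$. Each step resolves at least one of the finitely many conditions defining a distinguishing set without creating new coincidences among previously distinct nonzero vectors, so the procedure terminates with a finite $T_0$ satisfying the conditions of Definition~\ref{defT0}.

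The main obstacle is the case analysis underlying the inequality $X_f \neq X_{f'}$, particularly the $S_3/S_3$ subcase where $L_f$ and $L_{f'}$ share a common quadratic subfield and the Galois group of the compositum has to be identified as a fiber product before the needed elements of $X_f \triangle X_{f'}$ can be exhibited; everything else is a standard Chebotarev-plus-greedy argument.
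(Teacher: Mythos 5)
Your proof is correct and follows essentially the same route as the paper: reduce to distinguishing each pair of cubics (and each cubic from the zero vector) by a single prime, apply the \v{C}ebotarev Density Theorem to the compositum of the two splitting fields, and case-split on how the fields interact, then assemble $T_0$ greedily. The only difference is cosmetic — you organize the cases by Galois group type and shared quadratic resolvent rather than by discriminant classes, and you exhibit explicit elements of $X_f\setminus X_{f'}$ where the paper simply quotes the resulting densities and defers details to \cite{argaez01}.
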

\begin{proof}
Let $\FF=\{f_{i}\}_{i=1}^{n}$ be the set of monic cubic polynomials
defining the $S_{3}$ and $C_{3}$ extensions of $K$. Set $f_{0}=x^{3}$
and define $\lambda(f_{0},\p)=0$ for all $\p$. It is enough to show
that for all $0\leq j< i\leq n$ there exists a prime $\p\not\in
S(\FF)$ such that $\lambda(f_{i},\p)\neq \lambda(f_{j},\p)$. For
$i\geq 1$ let $L_{i}$ be the splitting field of $f_{i}$. We divide the
proof into three cases. (For more details of the density calculations,
see \cite[p.~21, Lemma 3.2.5]{argaez01}.)

Case 1: When $j=0$, we require for each~$i\ge1$ the existence
  of a prime~$\p$ such that $\lambda(f_{i},\p)=1$. By the
  \CDT, there are infinitely many such
  primes, with density $\frac{1}{3}$ when $\gal(L_{i}/K)\cong S_{3}$,
  or $\frac{2}{3}$ when $\gal(L_{i}/K)\cong C_{3}$.

 Case 2: When $i>j\geq 1$ and
  $\disc(L_{i})\not\equiv\disc(L_{j}) \pmod{(K^{*})^{2}}$, the fields
  $L_{i}$ and $L_{j}$ are disjoint. Then there are three possibilities
  for the Galois group of their compositum, according to whether the
  discriminants are trivial (\textit{i.e.}, square).  In each case
  there are infinitely many primes which fulfill the condition, with
  density $\frac{4}{9}$ when $\gal(L_{i}L_{j})\cong S_{3}\times
  S_{3}$, and $\frac{5}{9}$ when $\gal(L_{i}L_{j})$ is $S_{3}\times
  C_{3}$.

Case 3: When $i,j\geq 1$ and
  $\text{disc}(L_{i})\equiv\text{disc}(L_{j})\pmod{(K^{*})^{2}}$ we
  have two possibilities; the density is $\frac{4}{9}$ when both
  Galois groups are isomorphic to $C_{3}$ and is $\frac{2}{9}$ when
  both are isomorphic to $S_{3}$.
\end{proof}

A distinguishing set $T_{0}$ of primes can be computed using the
following algorithm.  The size $t$ of~$T_{0}$ depends on the total
number $n$ of $C_{3}$ and $S_{3}$ extensions of $K$ unramified outside
$S$, and there exists such a set for which
$\lceil\log_{2}(n)\rceil\leq t\leq n-1$.

\begin{algorithm}[H]\label{algoritmoT0}
    \caption{To determine a distinguishing set $T_{0}$ of primes of
      $K$.}  \SetKwInOut{Input}{Input} \SetKwInOut{Output}{Output}
    \Input{A number field $K$. A finite set $S$ of primes of $K$. \\ A
      set $\FF=\{f_1,\dots,f_n\}$ of cubics defining $C_3$ and $S_3$
       \\\ extensions of $K$ unramified outside~$S$.}
    \Output{$T_0$, a distinguishing set of primes for $(\FF,S)$.}

	Let $f_{0}=x^3$\;
	Let $T_{0}=\{\}$\;
	\While{$\#\{\vv(f_{i},T_{0})\mid0\leq i\leq n\}<n+1$ }
	{
	Find $i\neq j$ such that $\vv(f_{i},T_{0})=\vv(f_{j},T_{0})$\;
	Find a prime $\p\notin S\cup T_{0}$ such that $\lambda(f_{i},\p)\neq \lambda(f_{j},\p)$\;
	Let $T_{0}= T_{0}\cup\{\p\}$\;
	}
    \Return $T_{0}$.
\end{algorithm}

\begin{example}\label{ex:1}
Let $K=\Q$ and take $S=\{2,37\}$.  The only $C_3$ extension of~$\Q$
unramified outside~$S$ is the splitting field of $f=x^{3} - x^{2} - 12
x - 11$ (with discriminant~$37^2$), while there are two such $S_3$
extensions with polynomials~$g=x^{3} - x^{2} - 3 x + 1$ and $h= x^{3}
- x^{2} - 12 x + 26$ (with discriminants $37\cdot2^2$ and
$-(2\cdot37)^2$ respectively), so $\FF=\{f,g,h\}$.  (These fields have
LMFDB labels \lmfdbnumberfield{3.3.148.1},
\lmfdbnumberfield{3.3.1369.1} and \lmfdbnumberfield{3.1.5476.1}.) We
  may take $T_0=\{3,5\}$ where the values of $\lambda$ are $(1, 1)$,
  $(1,0)$, $(0,1)$ for $f,g,h$ respectively.
\end{example}

\subsection{Determining residual irreducibility and splitting field}

As above, let $\rho$ be a Black Box $2$-adic Galois representation
over~$K$ unramified outside~$S$, let $\FF=\{f_1,\dots,f_n\}$ be a set
of irreducible cubics defining all~$C_3$ and~$S_3$ extensions of $K$
unramified outside~$S$, and let $T_0$ be a distinguishing set of
primes for~$(\FF,S)$.  For $1\le i\le n$ let $L_i$ be the splitting
field of~$f_i$ over~$K$, and let $L$ be the residual splitting field
of~$\rho$ with respect to one stable lattice.

\begin{proposition}\label{lemmac3s3}
With notation as above,
  \begin{enumerate}
  \item If $[L:K]=6$ or $3$ then, for exactly one value $i\ge1$, we
    have $L=L_{i}$ and
    $$
       \lambda(f_{i},\p)\equiv\tr(\rho(\frob{\p}))\pmod 2
    $$
    for all $\p\not\in S(\FF)$. Moreover, for infinitely many primes $\p$ we have
    $$
       \tr(\rho(\frob{\p}))\equiv1\pmod 2.
    $$
     \item $[L:K]\leq 2$ if and only if
       $$
       \tr(\rho(\frob{\p}))\equiv0\pmod 2
       $$
       for all $\p\notin S(\FF)$.
\end{enumerate}
\end{proposition}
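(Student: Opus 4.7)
The plan is to exploit the isomorphism $\gl_2(\F_2)\cong S_3$, under which the residual representation $\rhobar$ identifies $\gal(L/K)$ with a subgroup of $S_3$, so that $[L:K]\in\{1,2,3,6\}$. A direct check in $\gl_2(\F_2)$ shows that an element has trace~$1$ precisely when it has order~$3$, while the identity and the three transpositions all have trace~$0$. Hence, for every $\p\notin S$, the residue of $\tr\rho(\frobp)$ modulo~$2$ equals $1$ or $0$ according as $\frobp$ has order~$3$ or order at most~$2$ in $\gal(L/K)$.

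For part~(1), assume $[L:K]\in\{3,6\}$. Since $L/K$ is Galois with group $C_3$ or $S_3$ and is unramified outside $S$ (as $\rhobar$ factors through $\rho$), the hypothesis on $\FF$ guarantees $L=L_i$ for exactly one $i\ge 1$. Lemma~\ref{lemmafunctionlambda} then identifies $\lambda(f_i,\p)$ with the indicator of ``$\frobp$ has order $3$ in $\gal(L/K)$'', which by the previous paragraph matches $\tr\rho(\frobp)\bmod 2$ for all $\p\notin S(\FF)$. Both $C_3$ and $S_3$ contain elements of order~$3$, so the \CDT\ applied to $L/K$ supplies a positive density (hence infinitely many) of primes $\p$ with $\tr\rho(\frobp)\equiv 1\pmod 2$.

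For part~(2), the forward direction is immediate: if $|\gal(L/K)|\le 2$, every Frobenius has order $1$ or $2$, hence trace~$0$ modulo~$2$. The converse is the contrapositive of the density assertion in part~(1): if $[L:K]\in\{3,6\}$, infinitely many primes outside $S(\FF)$ have odd trace, so universal vanishing of $\tr\rho(\frobp)\pmod 2$ forces $[L:K]\le 2$.

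No step presents a real obstacle: the argument is essentially a translation between the trace/order dictionary in $\gl_2(\F_2)\cong S_3$ and the splitting behaviour of primes encoded by~$\lambda$. The only subtle point is the appeal to completeness of~$\FF$, ensuring that when $\rhobar$ is irreducible its splitting field is genuinely one of the $L_i$; this is precisely the working hypothesis of this subsection.
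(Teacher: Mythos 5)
Your argument is correct and follows essentially the same route as the paper's proof: identify $\gal(L/K)$ with the residual image in $\gl_2(\F_2)\cong S_3$, use the trace/order dictionary (trace $1$ iff order $3$), and invoke Lemma~\ref{lemmafunctionlambda} to match this with $\lambda(f_i,\p)$. You are in fact slightly more thorough than the paper, which leaves the \CDT{} density argument for the ``infinitely many'' claim and the converse of part (2) implicit.
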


\begin{proof}
Suppose that $[L:K]=6$ or $3$.  Then the image of $\overline{\rho}$ is
$C_{3}$ or $S_{3}$ and $L=L_{i}$, the splitting field of $f_{i}$, for
some $i$, $1\leq i\leq n$. Hence for all $\p\notin S(\FF)$,
by~Lemma~\ref{lemmafunctionlambda}, we have
\begin{align*}
 \lambda(f_{i},\p)=1&\Leftrightarrow \frobp\text{ has order } 3\text{ in Gal}(L_{i}/K)\\
 &\Leftrightarrow\rhobar(\frobp) \text{ has order }3\text{ in }\gl_{2}(\F_{2})\\
 &\Leftrightarrow\tr(\rho(\frobp))\equiv1\pmod 2.
\end{align*}

On the other hand, if $[L:K]\leq 2$, the image of $\rhobar$ is either
$C_{1}$ or $C_{2}$. Hence $\tr(\rho(\frob{\p}))\equiv 0\pmod2$ for all
$\p\not\in S$.
\end{proof}

Note that irreducibility of the residual representation can be
established with a single prime~$\p$ such that $\tr(\rho(\frob{\p}))$
is odd.  Using this proposition, we can achieve more: first, that for
$\rhobar$ to be reducible it suffices to check that
$\tr(\rho(\frob{\p}))$ is even for a \emph{finite} set of primes,
those in~$T_0$; secondly, that when they are not all even, the
values of $\tr(\rho(\frob{\p}))\pmod2$ for $\p\in T_0$ identify the
residual image precisely as $C_{3}$ or $S_{3}$, and also identify the
splitting field exactly.  Moreover both the set of cubics $\FF$ and
the distinguishing set~$T_0$ depend only on~$K$ and~$S$ and so may be
computed once and then used to test many representations~$\rho$ with
the same ramification restrictions.  The main result of this section
is as follows.

\begin{theorem}\label{thm:reducibleresidual}
Let $K$ be a number field, $S$ a finite set of primes of~$K$, and let
$\rho$ be a continuous $2$-dimensional $2$-adic Galois representation
over~$K$ unramified outside~$S$. Let $T_0$ be a distinguishing set
for~$S$ in the sense of Definition~\ref{defT0}.
\begin{enumerate}
\item
The finite set of values of $\tr(\rho(\frobp))\pmod{2}$, for~$\p\in
T_0$, determine the residual representation $\rhobar$ up to
semisimplification.  Hence (up to semisimplification) $\rhobar$ may be
identified from its Black Box presentation.
\item
In particular, the residual representation~$\rhobar$ has trivial
semisimplification (equivalently, is reducible over~$\F_2$), if and
only if
\[
   \tr(\rho(\frobp)) \equiv 0 \pmod 2 \qquad\forall\p\in T_0.
\]
\end{enumerate}
\end{theorem}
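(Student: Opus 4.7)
The plan is to leverage Proposition~\ref{lemmac3s3}, which identifies the residual splitting field $L$ of $\rho$ via the full infinite family of traces modulo~$2$, and then use the defining property of the distinguishing set~$T_0$ to reduce the task to finitely many primes.

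I would first handle part~(2). Since $\F_2^*=\{1\}$, every character $G_K\to\F_2^*$ is trivial, so a $2$-dimensional representation over~$\F_2$ has trivial semisimplification precisely when it is reducible, i.e., when $[L:K]\le 2$. The forward implication then follows immediately from Proposition~\ref{lemmac3s3}(2). For the converse, I would argue by contradiction: if all traces on~$T_0$ are even yet $[L:K]\in\{3,6\}$, then $L=L_i$ for a unique $i\ge 1$ by Proposition~\ref{lemmac3s3}(1), and the trace vector on~$T_0$ equals $\vv(f_i,T_0)$, which is non-zero by Definition~\ref{defT0}---contradicting the hypothesis.

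For part~(1), I would split into the same two cases. If all traces on~$T_0$ are even, the argument just given shows $\rhobar$ has trivial semisimplification, which identifies it. Otherwise the non-zero trace vector on~$T_0$ equals $\vv(f_i,T_0)$ for a unique $i\ge 1$, by Proposition~\ref{lemmac3s3}(1) combined with the defining property that the vectors $\vv(f_j,T_0)$ for $j=1,\dots,n$ are pairwise distinct. Once $i$, and hence the splitting field $L=L_i$, is known, we know the image of $\rhobar$ in $\gl_2(\F_2)\cong S_3$ up to conjugacy, because the representation factors as $G_K\twoheadrightarrow\gal(L/K)\hookrightarrow\gl_2(\F_2)$ and the embedding of $C_3$ or $S_3$ into $S_3$ is unique up to conjugation (the two $3$-cycles in $S_3$ are conjugate via a transposition). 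An irreducible $\F_2$-representation equals its own semisimplification, so $\rhobar$ is thereby determined.

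The theorem is essentially a corollary of Proposition~\ref{lemmac3s3} once $T_0$ is defined appropriately, so I do not expect a single genuinely hard step. The main point to verify carefully is the passage from the finite condition on $T_0$ to the infinite condition on all $\p\notin S(\FF)$, which is exactly what the distinguishing-set property is designed to guarantee; this is the place where the bookkeeping matters most, together with the small observation that $\F_2^*=\{1\}$ forces reducible $\F_2$-representations to have trivial semisimplification.
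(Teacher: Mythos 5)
Your proposal is correct and follows essentially the same route as the paper: both reduce the theorem to Proposition~\ref{lemmac3s3} together with the defining property of the distinguishing set $T_0$ (the vectors $\vv(f_i,T_0)$ being distinct and non-zero), splitting into the cases $\vv=\0$ versus $\vv=\vv_i$. Your added remarks --- that reducibility over $\F_2$ is equivalent to trivial semisimplification because $\F_2^*$ is trivial, and that the embedding of $\gal(L_i/K)$ into $\gl_2(\F_2)\cong S_3$ is unique up to conjugacy --- only make explicit points the paper leaves implicit.
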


\begin{proof}
Let $\FF=\{f_1,\dots,f_n\}$ and let $T_0=\{\p_1,\dots,\p_t\}$ be a
distinguishing set for~$(S,\FF)$ as above.  The vectors
$$
  \vv_{i}=(\lambda(f_{i},\p_{1}),...,\lambda(f_{i},\p_{t})) \in \F_{2}^{t}
$$ for $1\le i\le n$ are distinct and non-zero by definition of~$T_0$.
  Using the Black Box, we compute the vector
$$
  \vv=(\tr(\rhobar(\frob {\p_{1}})),...,\tr(\rhobar(\frob {\p_{t}}))) \in \F_{2}^{t}.
  $$
By Proposition~\ref{lemmac3s3}, we have (with $L$ and $L_i$ as defined there)
$$\vv=\vv_{i} \Leftrightarrow L=L_{i}\Leftrightarrow [L:K]=6 \text{ or }3$$
and
$$
\vv=\0\Leftrightarrow [L:K]\leq 2.
$$ Hence $\rhobar$ is irreducible if and only if $\vv=\vv_i$ for
some~$i$, in which case its splitting field is that of~$f_i$ and its
image is isomorphic to~$S_3$, unless $\disc f_i\in (K^*)^2$ in which
case the image is~$C_3$.  Otherwise, $\vv=\0$ and $\rhobar$ is
reducible, with trivial semisimplification.
\end{proof}


\begin{algorithm}[H]\label{algoritmoc3/s3}
\caption{To determine the residual image of an integral $2$-adic
  Galois representation, up to semisimplification.}
\SetKwInOut{Input}{Input}
\SetKwInOut{Output}{Output}
\Input{A number field $K$.\\
       A finite set $S$ of primes of $K$.\\
       A Black Box Galois representation $\rho$ unramified outside $S$.
       }
\Output{$\bullet$ ({\tt True}, $f$, $G$) if $\rhobar$ is irreducible, with splitting
  field that of~$f$, and image $G\cong C_{3}$ or~$S_{3}$.\\
       $\bullet$ {\tt False} if $\rhobar$ is reducible.}

Let $\FF=\{f_{i}\}_{i=1}^{n}$ be a set of monic irreducible cubics
defining all $S_3$ and $C_3$ extensions of~$K$ unramified outside~$S$\;
Using Algorithm \ref{algoritmoT0}, compute a distinguishing set
$T_{0}=\{\p_1,\dots,\p_t\}$ of primes for $(\FF,S)$\;
Let $\vv=(\tr(\rhobar(\frob{\p_{1}})),...,\tr(\rhobar(\frob{\p_{t}})))$\;

\For{i=1...n}{
\If{$\vv=\vv(f_{i},T_{0})$}
{
  Let $G=C_3$ if $\disc f_i$ is square, else $G=S_3$\;
  \textbf{return} ({\tt True}, $f_{i}$, $G$).
}
}
\textbf{return} {\tt False}.
\end{algorithm}

\begin{example*}{\textbf{(continued.)}}
With $K=\Q$, $S=\{2,37\}$ we have $\#\FF=3$ and $T_0=\{3,5\}$.  Hence
for a mod~$2$ representation over~$\Q$ unramified outside~$\{2,37\}$
we may test irreducibility by inspecting the parity of the trace~$a_p$
at~$p=3$ and $p=5$.  As an example, we consider the $156$ isogeny
classes of elliptic curves of conductor $2^a37^b$.  Of these, $36$
have $a_3\equiv a_5\equiv0\pmod2$, hence the representation is
reducible; indeed, these curves have rational $2$-torsion.  There are
$8$ with $a_3\equiv a_5\equiv1\pmod2$ with 2-division field the
splitting field of~$f=x^{3} - x^{2} - 12 x - 11$.  The remaining~$112$
classes comprise $80$ with $a_3\equiv1, a_5\equiv0\pmod2$ and $32$
with $a_3\equiv0, a_5\equiv1\pmod2$, whose $2$-division fields have
Galois group~$S_3$ and are the splitting fields of~$g=x^{3} - x^{2} -
3 x + 1$ and $h= x^{3} - x^{2} - 12 x + 26$, respectively.  These
curves have no rational $2$-torsion.

Here we could have considered the curves up to isogeny and up to
quadratic twist, since quadratic twists obviously have the same
mod~$2$ representation.  The number of cases then reduces to~$22$ ($6$
reducible and $1$, $11$, and $4$ for each irreducible case).
\end{example*}
\section{Determining triviality of the residual representation up to isogeny}
\label{sec:small-large}

Let $\rho\colon G_{K}\to\gl_{2}(\Z_{2})$ be a continuous Galois
representation unramified outside $S$ with reducible residual
representation.  Depending on the choice of stable lattice~$\Lambda$,
the order of $\rhobar_\Lambda(G_K)\le\gl_2(\F_2)$ is either~$1$
or~$2$, though the semisimplification of~$\rhobar_\Lambda$ is always
trivial. In this section we will give a method to decide whether
within the isogeny class of~$\rho$ there is an integral
representation~$\rho_\Lambda$ whose residual
representation~$\rhobar_\Lambda$ is trivial.  If this is the case, it
follows from the remarks about the isogeny graph at the end of
Section~\ref{sec:background} that the corresponding vertex in the
isogeny graph~$\BT(\rho)$ has degree~$3$, the width of the graph is at
least~$2$, and it contains at least~$4$ vertices; otherwise, its width
is~$1$ and it consists of just two vertices linked by a single edge.
We call these \emph{large} and \emph{small} isogeny classes
respectively.

Vertices of~$\BT(\rho)$ either have degree~$1$, non-trivial residual
representation, and quadratic splitting field with non-trivial
discriminant in~$K(S,2)_u$; or degree~$3$ and trivial residual
representation.  So each vertex of~$\BT(\rho)$ has an associated
discriminant, and we would like to describe the graph structure
of~$\BT(\rho)$---the number of vertices, and width---as well as the
discriminants of its extremal (degree~$1$) vertices.

In this section we show how to distinguish the small and large cases;
in Section~\ref{sectionLargeIsogenyClasswidthat2} we will continue
under the assumption that the class is large.  The following notation
will be useful for the tests we will develop; note that since we are
now assuming that $\rho$ is residually reducible,
$\tr(\rho(\frobp))\equiv0\pmod2$ for all~$\p\notin S$ so that
$F_{\p}(1)\equiv 0\pmod2$.  Define
\begin{align}\label{ecuacionv(p)}
v(\p)=\ord_{2}(F_{\p}(1)).
\end{align}
When $v(\p)\geq k$ for some $k\ge1$, we define the \textit{test
  function}
\begin{align}\label{testequation}
t_{k}(\p)&=\dfrac{1}{2^{k}}F_{\p}(1)\pmod{2}\nonumber\\
&\phantom{:}=\dfrac{1}{2^{k}}(1-\tr(\rho(\frob{\p}))+\det(\rho(\frob{\p})))\pmod 2.
\end{align}
so that $t_{k}(\p)=0$ if and only if $v(\p)\geq k+1$. Write
$t_k(\sigma)=t_k(\p)$ when $\sigma=\frobp$.


\subsection{The test function for small isogeny classes}\label{elalgoritmo}
Let $\Lambda_{1}$ be a stable lattice under the action of
$\rho$. Since $\overline{\rho}$ is reducible, there is an index~$2$
sublattice $\Lambda_{2}$ which is also stable under $\rho$. Choosing
the bases $\Lambda_{1}=\left<v,w\right>$ and
$\Lambda_{2}=\left<v,2w\right>$ we have that
\[
\rho(\sigma) = \begin{pmatrix}a&b\\c&d \end{pmatrix}
               \equiv
               \begin{pmatrix}1&*\\ 0&1\end{pmatrix}\pmod2
\]
for all $\sigma\in G_{K}$. (Here we are showing matrices with respect
to the basis $\left<v,w\right>$, and our convention is that
$\begin{pmatrix}a&b\\c&d \end{pmatrix}$ maps $v\mapsto av+cw$ and
$w\mapsto bv+dw$.) There are two ways in which the graph
$\Lambda_{1}$---$\Lambda_{2}$ could be extended within~$\BT(\rho)$,
either or both of which could happen:
\begin{enumerate}
\renewcommand{\theenumi}{$(\alph{enumi})$}
\renewcommand{\labelenumi}{\textbf{\theenumi}}
\item If $c\equiv0\pmod4$ for all $\sigma\in G_{K}$ then \label{laconcha01}
\[
   \rho(\sigma) \equiv \begin{pmatrix}\pm1&*\\ 0& \pm1 \end{pmatrix}\pmod4
\]
and $\Lambda_{3}=\left<v,4w\right>$ is also stable, extending the
stable graph to $\Lambda_{1}$---$\Lambda_{2}$---$\Lambda_{3}$.  The
lattice $\Lambda_{4}=\left<2v,v+2w\right>$ is also stable and adjacent
to $\Lambda_{2}$, so $\Lambda_2$ has degree~$3$ in~$\BT(\rho)$.
\item If $b\equiv0\pmod2$ for all $\sigma\in G_{K}$ then \label{laconcha02}
\[
   \rho(\sigma) \equiv \begin{pmatrix} 1&0 \\ 0& 1 \end{pmatrix}\pmod2
\]
so $\overline{\rho}$ is trivial. Then $\Lambda_{3}'=\left<2v,w\right>$
is also stable and extends the graph to
$\Lambda_{3}'$---$\Lambda_{1}$---$\Lambda_{2}$.  The lattice
$\Lambda_{4}'=\left<v+w,2w\right>$ is also stable and adjacent to
$\Lambda_{1}$, so $\Lambda_1$ has degree~$3$ in~$\BT(\rho)$.
\end{enumerate}
These two situations are not essentially different, since by
conjugating with the matrix $\begin{pmatrix} 2&0\\ 0&1\end{pmatrix}$
  we interchange the roles of $\Lambda_{1}$ and $\Lambda_{2}$, and the
  two cases.

The following maps are easily seen to define two additive quadratic
characters of $G_{K}$, unramified outside $S$:
\begin{align*}
 \chi_{c}\colon\sigma\mapsto \dfrac{c}{2}\pmod2 \quad\text{and}\quad
 \chi_{b}\colon\sigma\mapsto b\pmod2,
\end{align*}
which correspond to two extensions $K(\sqrt{\Delta_{b}})$,
$K(\sqrt{\Delta_{c}})$ with $\Delta_{b}, \Delta_{c}\in K(S,2)_u$,
possibly equal or trivial, and the isogeny class~$\BT(\rho)$ is large
if and only if at least one is trivial.  This establishes the
following criterion.

\begin{proposition}\label{prop:small-large}
$\BT(\rho)$ is small if and only if the characters $\chi_{b}$ and
  $\chi_{c}$ are both non-trivial.
\end{proposition}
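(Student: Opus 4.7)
The plan is to verify that $\chi_b$ and $\chi_c$ are well-defined additive characters of~$G_K$, unramified outside~$S$, and then to read the small/large dichotomy directly from the local structure of $\BT(\rho)$ at the two known stable vertices $\Lambda_1$ and~$\Lambda_2$. The central observation, already recorded at the end of Section~\ref{sec:background}, is that every vertex of $\BT(\rho)$ has degree $0$, $1$, or $3$; in particular the class is small (two vertices joined by a single edge) precisely when both $\Lambda_1$ and~$\Lambda_2$ have degree~$1$ in~$\BT(\rho)$.

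First I would check that $\chi_b$ and $\chi_c$ are group homomorphisms. Since every $\rho(\sigma)$ has $a_\sigma, d_\sigma \equiv 1 \pmod 2$ and $c_\sigma \equiv 0 \pmod 2$ in the chosen basis, the quantities $b_\sigma \pmod 2$ and $c_\sigma/2 \pmod 2$ are defined. Multiplying $\rho(\sigma)\rho(\tau)$ and using $a_\sigma \equiv d_\sigma \equiv 1 \pmod 2$ gives $b_{\sigma\tau} \equiv b_\sigma + b_\tau \pmod 2$ and $c_{\sigma\tau}/2 \equiv c_\sigma/2 + c_\tau/2 \pmod 2$. Since $\rho$ is continuous and unramified outside~$S$, so are $\chi_b$ and $\chi_c$, and each therefore corresponds by the classification in Section~\ref{sec:characters} to a discriminant in~$K(S,2)_u$.

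Next I would compute which Bruhat--Tits neighbors of $\Lambda_1$ and $\Lambda_2$ are stable. Working in the $\Z_2$-basis $\{v, 2w\}$ of~$\Lambda_2$, the representation becomes lower triangular mod~$2$ with $(2,1)$-entry $c_\sigma/2$; the three neighbors of $[\Lambda_2]$ in the Bruhat--Tits tree correspond to the three $\F_2$-lines in $\Lambda_2/2\Lambda_2$. The line yielding $[\Lambda_1]$ is fixed unconditionally, while the two other lines (yielding the lattices $\Lambda_3$ and~$\Lambda_4$ exhibited before the proposition) are $G_K$-stable precisely when $c_\sigma/2 \equiv 0 \pmod 2$ for every $\sigma$, i.e., when $\chi_c$ is trivial. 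The analogous computation at~$\Lambda_1$, using the upper-triangular shape of $\rho$ in the basis $\{v,w\}$ with $(1,2)$-entry $b_\sigma$, shows that the two extra neighbors $\Lambda_3'$ and $\Lambda_4'$ are stable precisely when $\chi_b$ is trivial.

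Finally I would assemble the equivalence. Since $\BT(\rho)$ is a connected subtree of the Bruhat--Tits tree (see Section~\ref{sec:background}), any stable lattice not homothetic to $\Lambda_1$ or $\Lambda_2$ lies on a path emanating from one of them, and hence forces one of these two vertices to have a second stable neighbor. Combining this with the preceding paragraph, $\BT(\rho)$ is small if and only if both $\Lambda_1$ and~$\Lambda_2$ have degree~$1$ in $\BT(\rho)$, if and only if neither $\chi_b$ nor $\chi_c$ is trivial. The only mildly delicate point is the passage from ``no second stable neighbor at~$\Lambda_1$ or $\Lambda_2$'' to ``no distant stable lattice at all''; this is immediate from the subtree property, and is the only non-local step in the argument.
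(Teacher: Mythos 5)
Your proof is correct and follows essentially the same route as the paper: the paper's argument is precisely the preceding computation of the extra stable neighbours $\Lambda_3,\Lambda_4$ (stable iff $\chi_c=0$) and $\Lambda_3',\Lambda_4'$ (stable iff $\chi_b=0$), after which the proposition is declared established. You merely make explicit two points the paper leaves implicit --- that $\chi_b$ and $\chi_c$ are homomorphisms, and that the converse direction (large implies some character trivial) follows from the connectedness of $\BT(\rho)$ as a subtree together with the fact that its vertex degrees are $0$, $1$ or $3$ --- both of which are correct and welcome additions.
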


In order to turn this criterion into an algorithm we must see how to
obtain information about these two characters using only the Black Box
and a finite set of primes $\p\notin S$. Taking $k=1$ in
(\ref{testequation}) we use the test function
\begin{align}\label{test1}
t_{1}(\sigma)&= \frac{1}{2}(F_{\sigma}(1)) \pmod 2\nonumber\\
&\phantom{:}\equiv \frac{1}{2}(1-\tr(\rho(\sigma))+\det(\rho(\sigma))) \pmod 2.
\end{align}

\begin{proposition}\label{proposition5.1}
With notation as above,
\[
t_{1}(\sigma) = \chi_{b}(\sigma)\chi_{c}(\sigma).
\]
\end{proposition}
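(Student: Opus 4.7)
The proof is a direct computation from the matrix form of $\rho(\sigma)$ established just before the statement. I would proceed as follows.

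First, fix the basis $\langle v,w\rangle$ of the stable lattice $\Lambda_1$ chosen above, so that for every $\sigma\in G_K$ we may write
\[
\rho(\sigma) = \begin{pmatrix} a & b \\ c & d \end{pmatrix}
\]
with $a\equiv d\equiv 1\pmod 2$ and $c\equiv 0\pmod 2$. Hence I can write $a=1+2a'$, $d=1+2d'$ and $c=2c'$ with $a',d',c'\in\Z_2$, while $b\in\Z_2$ is unrestricted. Note that in these coordinates $\chi_b(\sigma)\equiv b\pmod 2$ and $\chi_c(\sigma)\equiv c'\pmod 2$ by definition.

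Next, I would substitute into the definition of~$F_\sigma(1)$:
\[
F_\sigma(1) \;=\; 1 - \tr(\rho(\sigma)) + \det(\rho(\sigma)) \;=\; 1 - (a+d) + (ad - bc).
\]
Expanding with the expressions above gives $a+d = 2 + 2(a'+d')$ and $ad - bc = (1+2a')(1+2d') - 2bc' = 1 + 2(a'+d') + 4a'd' - 2bc'$, so that the linear terms cancel and
\[
F_\sigma(1) \;=\; 4a'd' - 2bc'.
\]

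Dividing by~$2$ and reducing mod~$2$, I obtain
\[
t_1(\sigma) \;=\; \tfrac{1}{2}F_\sigma(1) \pmod 2 \;\equiv\; 2a'd' - bc' \;\equiv\; bc' \pmod 2,
\]
which equals $\chi_b(\sigma)\chi_c(\sigma)$. This completes the proof. There is no real obstacle; the only subtlety worth flagging is that the identity holds on the nose as functions on $G_K$ (not merely on Frobenius elements), since the matrix congruences defining $\chi_b$ and $\chi_c$ are valid for every $\sigma\in G_K$, so the resulting formula can then be evaluated at $\sigma=\frobp$ to be used by the Black Box.
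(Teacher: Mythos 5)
Your proof is correct and is essentially the same computation as the paper's, which evaluates $t_1(\sigma)=\tfrac{1}{2}\bigl((a-1)(d-1)-bc\bigr)\equiv bc/2\pmod{2}$ using $a\equiv d\equiv1\pmod2$; your substitution $a=1+2a'$, $d=1+2d'$, $c=2c'$ just makes the same cancellation explicit. Nothing is missing.
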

\begin{proof}
We compute
$t_{1}(\sigma)=\frac{1}{2}((a-1)(d-1)-bc)\equiv
bc/2\equiv\chi_{b}(\sigma)\chi_{c}(\sigma)\pmod{2},$
using $a\equiv d\equiv1\pmod{2}$.
\end{proof}

So the Black Box reveals the value of the \emph{product} of the two
additive characters.

\begin{corollary}
The following are equivalent, assuming that $\rho$ is residually reducible:
\begin{enumerate}
\item $\BT(\rho)$ is large;
\item $t_1(\sigma)=0$ for all~$\sigma\in G_K$;
\item $t_1(\p_I)=0$ for all primes~$\p_I$, one such prime for each of the $2^r$
  subsets $I\subseteq\{1,2,\dots,r\}$.
\end{enumerate}
\end{corollary}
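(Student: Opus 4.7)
The plan is to chain together Propositions \ref{prop:small-large} and \ref{proposition5.1}, then invoke Chebotarev via the special primes $\p_I$ from Section~\ref{sec:characters}. Recall that Proposition~\ref{proposition5.1} identifies the Black Box test function $t_1$ with the pointwise product $\chi_b \cdot \chi_c$ of two $\F_2$-valued additive quadratic characters of $G_K$ unramified outside $S$, and Proposition~\ref{prop:small-large} says that $\BT(\rho)$ is large if and only if at least one of $\chi_b, \chi_c$ is trivial.

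For $(1)\Leftrightarrow(2)$, I would argue as follows. If one of $\chi_b, \chi_c$ is trivial then $t_1 = \chi_b\chi_c \equiv 0$ on $G_K$, giving $(1)\Rightarrow(2)$. Conversely, if both $\chi_b$ and $\chi_c$ are non-trivial then their kernels are proper (index-$2$) subgroups of $G_K$; since a group is never the union of two proper subgroups, there exists $\sigma\in G_K$ with $\chi_b(\sigma)=\chi_c(\sigma)=1$, so $t_1(\sigma)=1$, contradicting $(2)$. This gives $(2)\Rightarrow(1)$ (via the contrapositive of the ``small'' direction of Proposition~\ref{prop:small-large}).

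For $(2)\Leftrightarrow(3)$, the implication $(2)\Rightarrow(3)$ is trivial. For $(3)\Rightarrow(2)$, the key observation is that both $\chi_b$ and $\chi_c$ factor through the Galois group $G$ of the compositum $K(\sqrt{\Delta_1},\dots,\sqrt{\Delta_r})$, the maximal elementary abelian $2$-extension of $K$ unramified outside $S$; hence so does the pointwise product $t_1=\chi_b\chi_c$, viewed as a function (though not a homomorphism). By the defining property (\ref{propiedaddeladelta}) of the primes $\p_I$, their Frobenius images in $G\cong\F_2^r$ run over all $2^r$ elements as $I$ varies over subsets of $\{1,\dots,r\}$. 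Therefore vanishing of $t_1$ on every $\frob{\p_I}$ forces $t_1\equiv 0$ on all of $G$, and hence on all of $G_K$.

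No step is really an obstacle here, since the heavy lifting has been done in Propositions~\ref{prop:small-large} and~\ref{proposition5.1}. The one point that requires a moment of care is the ``union of two proper subgroups'' argument in $(2)\Rightarrow(1)$, which must not be overlooked because $t_1$ is not additive; without it, the pointwise vanishing of the product could naively be mistaken for weaker information than the triviality of one factor. The rest is a standard application of Chebotarev, encoded in the construction of the primes $\p_I$.
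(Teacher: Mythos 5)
Your proof is correct and follows essentially the same route as the paper: the equivalence $(1)\Leftrightarrow(2)$ via Propositions~\ref{prop:small-large} and~\ref{proposition5.1} together with the fact that no group is the union of two proper subgroups, and $(2)\Leftrightarrow(3)$ by observing that $\chi_b$ and $\chi_c$ (hence $t_1$) factor through the Galois group of the maximal elementary abelian $2$-extension unramified outside $S$, whose elements are exhausted by the classes $\frob{\p_I}$. The paper's proof is a compressed version of exactly these two observations.
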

\begin{proof}
The equivalence of the first two statements is because $\ker\chi_b$
and $\ker\chi_c$ are subgroups of~$G_K$, and no group is the union of
two proper subgroups. For the second equivalence, note that the pair
of values $(\chi_b(\frobp),\chi_c(\frobp))$ depends only on the
restriction of~$\frobp$ to the maximal elementary $2$-extension of~$K$
unramified outside~$S$ whose Galois group consists of
these~$\frob\p_I$.
\end{proof}

Although the corollary already reduces the current problem to a finite
number of tests, we will show in the next subsection how to use some
linear algebra over~$\F_2$ to reduce the test set of primes from a set
of size~$2^r$ (one for each subset~$I$) to a set of $r(r+1)/2$
\emph{quadratically independent} primes (with respect to~$S$).  Using
these, we will be able to determine not only whether at least one of
$\Delta_b$, $\Delta_c$ is trivial, in which case the class is large;
when both characters are non-trivial, we will also be able to determine
the unordered pair $\{\Delta_b,\Delta_c\}$ exactly.

\subsection{Quadratically independent sets of primes}\label{subsec:quadindep}

Let $\{\Delta_{i}\}_{i=1}^{r}$ be a basis for $V=K(S,2)_u$.  The
discriminants $\Delta_{b},\Delta_{c}\in V$ may be expressed as
$$
  \Delta_{b}=\prod_{i=1}^{r}\Delta_{i}^{x_{i}}, \quad
  \Delta_{c}=\prod_{i=1}^{r}\Delta_{i}^{y_{i}}
$$ with unknown exponent vectors $\x=(x_{i})$ and $\y=(y_{i})$ in
  $\F_{2}^{r}$.  We will determine the vectors $\x$ and $\y$ in the
  restricted sense of knowing whether either (a) at least one of $\x$
  and $\y$ is zero, or (b) they are both non-zero, in which case we
  will identify them precisely, as an unordered pair.

Let $T_{1}=\{\p_{1},...,\p_{r}\}$ be a linearly independent set of
primes chosen so that the $\alpha_{\p_{i}}$ are a dual basis to
$\{\Delta_{i}\}_{i=1}^{r}$. Then by $(\ref{alphaij=alphaialphaj})$ we
have $\chi_b(\p_i)=x_i$ and~$\chi_c(\p_i)=y_i$.  Hence, by Proposition
$\ref{proposition5.1}$, we have that $t_1(\p_i)=x_iy_i$.  More
generally for a prime~$\p_I$ (defined in Section~\ref{sec:characters})
we have, by (\ref{alphaij=alphaialphaj}),
\[
   t_1(\p_I) = x_Iy_I
\]
where we set $x_I=\sum_{i\in I}x_i$ and similarly for~$y_I$.

Define
\begin{align}\label{functionpsi}
\psi\colon V\times V\times V^{*}&\rightarrow\F_{2}\\\nonumber
(\Delta,\Delta',\alpha)&\mapsto \alpha(\Delta)\alpha(\Delta')
\end{align}
For fixed $\alpha$, the map $\psi_\alpha=\psi(-,-,\alpha)$ is a
symmetric bilinear function $V\times V\to\F_2$,
i.e., an element of the space $\text{Sym}^{2}(V)^{*}$ which has
dimension $r(r+1)/2$ and basis the functions $x_{i}y_{i}$ and
$x_{i}y_{j}+x_{j}y_{i}$ for $i\not=j$.  This leads us to define our
third (and last) set of test primes:

\begin{definition}\label{defnonquadset}
A set $T_{2}$ of primes $\p\not\in S$ is \emph{quadratically
  independent with respect to} $S$ if $\{\psi_{\alpha_{\p}}\mid \p\in
T_{2}\}$ is a basis for $\text{Sym}^{2}(V)^{*}$.
\end{definition}

The simplest quadratically independent sets consist of primes~$\p_i$
for $1\le i\le r$ (these already form a linearly independent set,
previously denoted~$T_1$), together with~$\p_{ij}$ for $1\le i<j\le
r$.  We will call quadratically independent sets of this form
\emph{special}.

\begin{remark}
If we fix instead $(\Delta,\Delta')$ in $(\ref{functionpsi})$ we
obtain a quadratic function
$\psi_{(\Delta,\Delta')}=\psi(\Delta,\Delta',-)$ on~$V^*$:
\begin{align*}
\psi_{(\Delta,\Delta')}\colon V^{*}&\rightarrow\F_{2}\\\nonumber
\alpha&\mapsto \alpha(\Delta)\alpha(\Delta').
\end{align*}
It is not hard to show that when $T_2$ is a quadratically independent set of
primes, the set $\{\alpha_{\p}\mid \p\in T_2\}$ is a
\emph{non-quadratic} subset of $V^{*}$ in the sense of Livn\'{e}
$\cite{livne}$.
\end{remark}

We now proceed to show that the values of the test function~$t_1(\p)$
for $\p$ in a special quadratically independent set of primes are
sufficient to solve our problem concerning the identification of the
vectors~$\x$ and~$\y$.  Define $\vv=(v_{1},...,v_{r})\in\F_{2}^{r}$ to
be the vector with entries
\[
 v_i=x_iy_i=t(\p_i).
 \]
Next let $\WW=(w_{ij})$ be the $r\times r$ matrix over $\F_{2}$ with
entries $w_{ii}=0$ and, for $i\not=j$,
\begin{align*}
  w_{ij} &= x_{i}y_{j}+x_{j}y_{i} = (x_i+x_j)(y_i+y_j) + x_iy_i +  x_jy_j\\
 &= t(\p_{ij})+t(\p_i)+t(\p_j).
\end{align*}
Then the $i$-th row of $\WW$ is given by
\begin{equation}
y_{i}\x+x_{i}\y \\ 
   = \begin{cases}
         \0 & \text{if } (x_{i},y_{i})=(0,0);\\
         \x & \text{if } (x_{i},y_{i})=(0,1);\\
         \y & \text{if } (x_{i},y_{i})=(1,0);\\
      \x+\y & \text{if } (x_{i},y_{i})=(1,1),\\
\end{cases}
\end{equation}
so that the rank of~$\WW$ is either~$0$ or~$2$.  Moreover,
\begin{itemize}
\item if $\x=\0$ or $\y=\0$, then $\vv=\0$ and $\WW=\0$;
\item if $\x\neq\0$ and $\y\neq\0$ and $\x=\y$, then $\vv=\x=\y\neq\0$ and $\WW=\0$;
\item if $\x\neq\0$ and $\y\neq\0$ and $\x\neq\y$, then
  $\WW\neq\0$. Moreover, at least two out of $\x$, $\y$, $\x+\y$
  (which are non-zero and distinct) appear as rows of $\WW$, and \label{cono03}
\begin{itemize}
\item if $\vv\neq \0$, then the rows of $\WW$ for which $v_{i}=1$ are
  $\x+\y$ and the remaining non-zero rows are equal to either $\x$ or $\y$;
\item if $\vv=\0$, then the non-zero rows of $\WW$ are all equal to either $\x$ and $\y$.
\end{itemize}
\end{itemize}
It follows that by inspecting $\vv$ and $\WW$, whose entries we can
obtain from our Black Box test function on $r(r+1)/2$ primes, we can
indeed determine whether $\x$ or $\y$ is zero, and if both are
non-zero then we can determine their values, and hence determine the
unordered pair of the discriminants~$\{\Delta_b, \Delta_c\}$.

\begin{proposition}\label{propo5.3}
Let $\rho$ be residually reducible.  From the set of values
$\{t_1(\p)\mid \p\in T_2\}$ of the test function~$t_1$ defined
in~(\ref{test1}), for $T_2$ a quadratically independent set of primes
with respect to~$S$, we may determine whether the isogeny class
of~$\rho$ is small or large, and in the first case we can determine
the unordered pair formed by the associated non-trivial discriminants.
\end{proposition}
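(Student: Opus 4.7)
The plan is to combine Proposition~\ref{proposition5.1} with the case analysis of $\vv$ and $\WW$ laid out immediately before the statement.

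First, by Proposition~\ref{proposition5.1}, for every prime $\p\notin S$ we have
\[
t_{1}(\p)=\alpha_{\p}(\Delta_{b})\alpha_{\p}(\Delta_{c}) = \psi_{\alpha_{\p}}(\Delta_{b},\Delta_{c}),
\]
i.e., $t_{1}(\p)$ records the evaluation of the linear functional $\psi_{\alpha_{\p}}\in\text{Sym}^{2}(V)^{*}$ on the symmetric element $\Delta_{b}\cdot\Delta_{c}\in\text{Sym}^{2}(V)$. Since $T_{2}$ is quadratically independent, $\{\psi_{\alpha_{\p}}\mid\p\in T_{2}\}$ is a basis of $\text{Sym}^{2}(V)^{*}$, so the values $\{t_{1}(\p)\mid \p\in T_{2}\}$ uniquely determine the tensor $\Delta_{b}\cdot\Delta_{c}$, and \emph{a fortiori} the value of $t_{1}$ on any other quadratically independent set. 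It is therefore enough to treat the case of the special quadratically independent set $T_{2}=\{\p_{i}\mid 1\le i\le r\}\cup\{\p_{ij}\mid 1\le i<j\le r\}$ attached to the chosen basis $\{\Delta_{i}\}$ of $K(S,2)_{u}$.

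Next I would recover $\vv$ and $\WW$ from the Black Box data: $t_{1}(\p_{i})=x_{i}y_{i}=v_{i}$ directly gives the entries of $\vv$, and (\ref{alphaij=alphaialphaj}) together with Proposition~\ref{proposition5.1} yields
\[
t_{1}(\p_{ij}) = (x_{i}+x_{j})(y_{i}+y_{j}) = x_{i}y_{i}+x_{j}y_{j}+w_{ij},
\]
so that $w_{ij}=t_{1}(\p_{ij})+t_{1}(\p_{i})+t_{1}(\p_{j})$. Hence the symmetric matrix $\WW$ is computable from the Black Box values on $T_{2}$ as well.

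Finally I would apply the trichotomy already tabulated just before the proposition: if $\vv=\0$ and $\WW=\0$ then $\x=\0$ or $\y=\0$, so by Proposition~\ref{prop:small-large} the isogeny class is large; if $\vv\neq\0$ but $\WW=\0$ then $\x=\y=\vv$, so a common non-trivial discriminant $\Delta_{b}=\Delta_{c}$ is recovered from $\vv$ and the class is small; and if $\WW\neq\0$ then $\WW$ has rank $2$, its non-zero rows lie in $\{\x,\y,\x+\y\}$, and the entries of $\vv$ single out those rows equal to $\x+\y$, whence the unordered pair $\{\x,\y\}$---and therefore $\{\Delta_{b},\Delta_{c}\}$---can be read off, the class again being small. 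The only mildly delicate point is the last one: one must check that the non-zero rows of $\WW$ really do exhaust a generating subset of $\{\x,\y,\x+\y\}$ and that $\vv$ suffices to separate them. This is exactly the content of the bulleted observations preceding the statement, so no new ingredient is required beyond what is already established in the section.
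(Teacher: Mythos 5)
Your proof is correct and follows essentially the same route as the paper, which establishes the proposition via the case analysis of $\vv$ and $\WW$ preceding the statement (and Algorithm~\ref{algoritmotest1}). Your opening observation---that quadratic independence of $T_2$ lets one recover the functional $B\mapsto B(\Delta_b,\Delta_c)$ on all of $\mathrm{Sym}^2(V)^*$ and hence reduce to the special set $\{\p_i\}\cup\{\p_{ij}\}$---is a cleaner justification of a reduction the paper dismisses as ``additional book-keeping,'' but it is not a different method.
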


See Algorithm~\ref{algoritmotest1}, where we follow the procedure
above, assuming that we take for $T_2$ a special set $\{\p_i\mid 1\le i\le
r\} \cup \{\p_{ij}\mid 1\le i<j\le r\}$.  In practice it might not be
efficient to insist on using a quadratically independent set of this
form, because we may need to test many primes $\p$ before finding
primes of the form $\{\p_{ij}\}$ for all $i<j$; also, the resulting
primes are likely to be large.  In applications, it may be
computationally expensive to compute the trace of $\rho(\frobp)$ for
primes $\p$ of large norm.  This is the case, for example, when $\rho$
is the Galois representation attached to a Bianchi modular form (see
\cite{pacetti} for numerical examples when $K$ is an imaginary
quadratic field of class number~$3$).  In our implementation we adjust
the procedure to allow for arbitrary quadratically independent sets.
The details are simply additional book-keeping, and we omit them here.

We give two algorithms to compute quadratically independent sets.  In
both cases we consider the primes of $K$ systematically in turn
(omitting those in~$S$), by iterating through primes on order of norm.
The first algorithm returns the
smallest such set (in terms of the norms of the primes), while the
second only uses primes for which $\#I(\p)\in\{1,2\}$ and returns a
set of the special form.

In Algorithm~\ref{algoritmoT2}, we construct a matrix $\Aa$ whose
columns are indexed by the subsets of $\{1,2,...,r\}$ of size $1$ and
$2$, i.e., the sets $\{i\}$ for $1\leq i\leq r$ and $\{i,j\}$ for
$1\leq i<j\leq r$, initially with $0$ rows. For each prime $\p$ we
compute $I(\p)$ and define $\vv(\p)$ in $\F_{2}^{\frac{r(r+1)}{2}}$ by
setting its coordinates to be
\begin{align}\label{vectorv(p)}
\begin{cases}
1& \text{ in position } i\text{ if } i\in I(\p)\\
1& \text{ in position } \{i,j\}\text{ if } \{i,j\}\subseteq I(\p)\\
0& \text{otherwise.}
\end{cases}
\end{align}
We add $\vv(\p)$ as a new row of $\Aa$, provided that this increases
the rank of $\Aa$, and we stop when $\rk\Aa=r(r+1)/2$.

\begin{algorithm}[H]\label{algoritmoT2}
\caption{To determine a quadratically independent set $T_{2}$ of primes of $K$.}
\SetKwInOut{Input}{Input}
\SetKwInOut{Output}{Output}
\Input{A number field $K$.\\
       A finite set $S$ of primes of $K$.}
\Output{A finite quadratically independent set $T_{2}$ of primes of $K$.}
Let $\{\Delta_{i}\}_{i=1}^{r}$ be a basis for $K(S,2)_u$\;
Let $T_{2}=\{\}$\;
Let $\Aa$ be a $0\times \frac{r(r+1)}{2}$ matrix over $\F_{2}$\;
\While{$\Aa$ \emph{has} $< r(r+1)/2$ \emph{rows}}{
Let $\p$ be a prime not in $S\cup T_{2}$\;
Compute $I=I(\p)$ using $(\ref{definitionI(p)})$\;
Compute $\vv(\p)$ from $(\ref{vectorv(p)})$\;
Let $\Aa'=\Aa+\vv(\p)$   (adjoin $\vv(\p)$ as a new row of $\Aa$)\;
\If{$\rk(\Aa')>\rk(\Aa)$}
{Let $\Aa=\Aa'$\;
Let $T_{2}=T_{2}\cup\{\p\}$.}
}
\textbf{return} $T_{2}$.
\end{algorithm}

This variant produces a special quadratically independent set by only
including primes~$\p$ for which $I(\p)$ has size~$1$ or~$2$.

\begin{algorithm}[H]\label{algoritmoT2*}
\caption{To determine a special quadratically independent set $T_{2}$ of primes of $K$.}
\SetKwInOut{Input}{Input}
\SetKwInOut{Output}{Output}
\Input{A number field $K$.\\
       A finite set $S$ of primes of $K$.}
\Output{An indexed special quadratically independent set $T_{2}$ of primes. }
Let $A=B=\{\}$\;
Let $T_{2}=\{\}$\;
\While{$\#(A\cup B)<r(r+1)/2$}
{
Let $\p$ be a prime not in $S\cup T_{2}$\;
Compute $I=I(\p)$ using $(\ref{definitionI(p)})$\;
\If{$\# I=1$ \emph{with} $I=\{i\}$}
 {\If{$i\notin A$}
   {
   Let $\p_{i}=\p$\;
   Let $A=A\cup\{i\}$\;
   Let $T_{2}=T_{2}\cup\{\p_{i}\}.$
   }
 }
\If{$\# I=2$ \emph{with} $I=\{i,j\}$ \emph{and} $i<j$}
 {\If{$(i,j)\notin B$}
   {
   Let $\p_{ij}=\p$\;
   Let $B=B\cup\{(i,j)\}$\;
   Let $T_{2}=T_{2}\cup\{\p_{ij}\}.$
   }
 }
}
\textbf{return} $T_{2}$.
\end{algorithm}

\begin{example*}{(continuation of Example~\ref{ex:1})}
As before, we take $K=\Q$ and $S=\{2,37\}$.  Using $[-1,2,37]$ as an
ordered basis for $K(S,2)=K(S,2)_u$ we find, using
Algorithm~\ref{algoritmoT2*}, $T_2=\{7,53,17,3,5,23\}$.  For example,
$p=23$ is inert in $\Q(\sqrt{d})$ for $d=-1$ and $d=37$ but not for
$d=2$, so $I(23)=\{1,3\}$. The data for these primes is as follows:
\[
\begin{tabular}{c|cccccc}
$I$ & $\{1\}$& $\{2\}$& $\{3\}$& $\{1,2\}$& $\{2,3\}$& $\{1,3\}$\\
\hline
  $p_I$ & $7$ & $53$ & $17$ & $3$ & $5$ & $23$ \\
\end{tabular}
\]
Applying Algorithm~\ref{algoritmotest1} to the 36 isogeny classes of
elliptic curves with good reduction outside~$\{2,37\}$ and rational
$2$-torsion, we find that in $4$ cases the class is large, so contains
an elliptic curve with full $2$-torsion defined over~$\Q$ and hence
trivial mod-$2$ representation (these classes have LMFDB labels
\lmfdbeciso{32}{a}, \lmfdbeciso{64}{a}, \lmfdbeciso{43808}{a}, \lmfdbeciso{87616}{z}); while in all other
cases the class is small.  The discriminant pairs
$\{\Delta_1,\Delta_2\}$ returned by Algorithm~\ref{algoritmotest1} in these cases are
$\{-1,37\}$ (4 cases); $\{37,-37\}$ (8 cases); $\{-1,2\}$ (8 cases);
$\{2,-2\}$ (4 cases); $\{2,2\}$ (4 cases); and $\{74,-74\}$ (4 cases).
For example, isogeny class \lmfdbeciso{350464}{h} gives $a_p=0$ for $p=5, 7,
23$ and~$53$ while $a_{17}=6$ and $a_3=2$; this yields $\vv=(0,1,0)$
and $\WW=\0$, so both discriminants are~$2$ (modulo squares).  Indeed,
this isogeny class consists of two elliptic curves linked by
$2$-isogeny, each having a discriminant which is twice a square.

We leave it to the reader to explain why in every case the Hilbert
Symbol $(\Delta_1,\Delta_2)=+1$.
\end{example*}

\begin{algorithm}[H]\label{algoritmotest1} 
\caption{To determine whether the stable Bruhat-Tits tree of $\rho$
  has width exactly $1$ or at least $2$, together with the associated
  discriminants.}
\SetKwInOut{Input}{Input}
\SetKwInOut{Output}{Output}
\Input{A number field $K$.\\
       A finite set $S$ of primes of $K$.\\
       A Black Box Galois representation $\rho$ unramified outside $S$\\ 
       whose residual image is reducible.}
\Output{If $BT(\rho)$ has width $1$, return: {\tt True}, $\{\Delta_1, \Delta_2\}$.\\
       If $BT(\rho)$ has width $\geq 2$, return: {\tt False}.}
Let $\{\Delta_1,\dots,\Delta_r\}$ be a basis for $K(S,2)_u$\;
Let $T_{2}=\{\p_i\mid 1\le i \le r\}\cup\{\p_{ij}\mid 1\le i<j\le r\}$
 be a special quadratically independent set for $S$\;
Let $\vv=(t_{1}(\p_{1}),...,t_{1}(\p_{r}))\in \F_{2}^{r}$\;
Let $\WW=(t_{1}(\p_{ij})+t_{1}(\p_{i})+t_{1}(\p_{j}))\in M_{r}(\F_{2})$\;
\If{$\WW=\0$ and $\vv=\0$}{
\textbf{return} {\tt False}\;
}
\eIf{$\WW=\0$}{
Let $\x=\y=\vv$\;
}
{
\eIf{$\vv=\0$}{
Let $\x$ and $\y$ be any two distinct non-zero rows of $\WW$.
}
{
Let $\z$ be the $i$th row of $\WW$, where $i$ is such that $t_1(\p_i)=1$\;
Let $\x$ be any non-zero row of $\WW$ distinct from $\z$\;
Let $\y=\x+\z$.
}
}
\textbf{return} True, $\{\prod_{i=1}^{r}\Delta_{i}^{x_{i}}, \prod_{i=1}^{r}\Delta_{i}^{y_{i}} \}$.
\end{algorithm}
~ 
The methods of this section give an algorithm to determine whether the
isogeny class of~$\rho$ contains an integral representation whose
residual representation is trivial.
\begin{theorem}\label{thm:trivialmod2}
Let $K$ be a number field, $S$ a finite set of primes of~$K$, and let
$\rho$ be a continuous $2$-dimensional $2$-adic Galois representation
over~$K$ unramified outside~$S$. Assume that $\rho$ has reducible
residual representation.  Then there exists a stable lattice with
respect to which the residual representation $\rhobar$ is trivial, if
and only if
\[
    t_1(\p)\equiv0\pmod2 \qquad \forall \p \in T_2;
\]
that is,
\[
   1 - \tr\rho(\frobp) + \det\rho(\frobp) \equiv0\pmod4 \qquad \forall \p \in T_2;
\]
where $T_2$ is any quadratically independent set of primes for~$S$.
\end{theorem}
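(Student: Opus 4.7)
The plan is to combine three ingredients already in hand: the correspondence between the existence of a stable lattice with trivial residual representation and the triviality of at least one of the discriminants $\Delta_b, \Delta_c \in V := K(S,2)_u$ (from Proposition~\ref{prop:small-large} and the discussion preceding it); the identity
\[
 t_1(\p) \;=\; \chi_b(\frobp)\,\chi_c(\frobp) \;=\; \alpha_\p(\Delta_b)\,\alpha_\p(\Delta_c) \;=\; \psi_{\alpha_\p}(\Delta_b,\Delta_c)
\]
supplied by Proposition~\ref{proposition5.1}; and the defining spanning property of a quadratically independent set, namely that $\{\psi_{\alpha_\p} : \p \in T_2\}$ is a basis of $\text{Sym}^{2}(V)^*$ (Definition~\ref{defnonquadset}).

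The forward implication will be immediate: if either $\Delta_b$ or $\Delta_c$ is trivial in $V$, then $\psi_\alpha(\Delta_b,\Delta_c) = 0$ for every $\alpha \in V^*$, so $t_1(\p) = 0$ for every $\p \notin S$, in particular for $\p \in T_2$.

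For the converse, I would assume $t_1(\p) = 0$ for all $\p \in T_2$ and introduce the evaluation map
\[
 \mathrm{ev}\colon \text{Sym}^{2}(V)^* \longrightarrow \F_2, \qquad \phi \longmapsto \phi(\Delta_b,\Delta_c),
\]
which is $\F_2$-linear in $\phi$. By hypothesis, $\mathrm{ev}$ annihilates every element of the basis $\{\psi_{\alpha_\p} : \p \in T_2\}$, so $\mathrm{ev}$ is identically zero on $\text{Sym}^{2}(V)^*$. Specialising at $\phi = \psi_\alpha$ for an arbitrary $\alpha \in V^*$ then yields $\alpha(\Delta_b)\,\alpha(\Delta_c) = 0$ for every $\alpha \in V^*$, which says $V^* = H_b \cup H_c$, where $H_b, H_c \subset V^*$ are the annihilators of $\Delta_b, \Delta_c$. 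If both $\Delta_b$ and $\Delta_c$ were non-trivial, $H_b$ and $H_c$ would be proper hyperplanes, contradicting the fact that no vector space is the union of two proper subspaces. Hence one of the two discriminants must be trivial, and the equivalence with the stated mod-$4$ congruence $1 - \tr\rho(\frobp) + \det\rho(\frobp) \equiv 0 \pmod 4$ is then immediate from the definition~(\ref{test1}) of $t_1$.

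The point that requires most care — and the only real obstacle I anticipate — is that the map $\alpha \mapsto \psi_\alpha$ from $V^*$ to $\text{Sym}^{2}(V)^*$ is quadratic rather than $\F_2$-linear, so the spanning property of the $\psi_{\alpha_\p}$ cannot be applied directly on the source side. It has to be exploited on the target side, where evaluation at the fixed pair $(\Delta_b,\Delta_c)$ \emph{is} linear. This is precisely why the \emph{quadratic} independence of $T_2$, rather than mere linear independence (which would only control a single discriminant), is the correct hypothesis for reading off triviality of a product of two characters.
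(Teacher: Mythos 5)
Your proof is correct, and for the decisive finiteness step it takes a genuinely cleaner route than the paper. Both arguments rest on the same two pillars — Proposition~\ref{prop:small-large} (large class $\iff$ $\chi_b$ or $\chi_c$ trivial) and Proposition~\ref{proposition5.1} ($t_1=\chi_b\chi_c$) — but they diverge afterwards. The paper first reduces to checking $t_1(\p_I)=0$ for one prime $\p_I$ for each of the $2^r$ subsets $I$ (its Corollary, using the fact that no group is the union of two proper subgroups, applied to $\ker\chi_b,\ker\chi_c\le G_K$), and then, in subsection~\ref{subsec:quadindep}, works coordinate-wise with a \emph{special} quadratically independent set, assembling the vector $\vv=(x_iy_i)$ and the matrix $\WW=(x_iy_j+x_jy_i)$ and doing a case analysis on their vanishing; the case of an arbitrary quadratically independent set is dismissed as ``book-keeping''. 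Your evaluation-map argument handles an arbitrary quadratically independent set directly and basis-freely: linearity of $\phi\mapsto\phi(\Delta_b,\Delta_c)$ on $\mathrm{Sym}^2(V)^*$ propagates the vanishing from the basis $\{\psi_{\alpha_\p}\}$ to all $\psi_\alpha$, and the union-of-two-proper-subspaces argument is then applied inside $V^*$ rather than inside $G_K$ (these are equivalent uses of the same group-theoretic fact, since $\alpha\mapsto\alpha(\Delta)$ ranges over all of $V^*$ as $\alpha_\p$ does by \v{C}ebotarev). What the paper's heavier computation buys — and what your argument deliberately forgoes — is the extra output of Proposition~\ref{propo5.3}, namely the unordered pair $\{\Delta_b,\Delta_c\}$ when the class is small; the theorem as stated does not need this, so your proof is complete as it stands. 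Your closing remark correctly identifies the one genuine subtlety: $\alpha\mapsto\psi_\alpha$ is quadratic, not linear, so the basis property of $T_2$ must be exploited on the target side via evaluation at the fixed pair $(\Delta_b,\Delta_c)$.
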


\goodbreak

\section{Large isogeny classes}\label{sectionLargeIsogenyClasswidthat2}
From now on we will assume that $\rho$ has trivial residual
representation, so that its isogeny class $\BT(\rho)$ consists at
least of $\rho$ together with the three $2$-isogenous integral
representations: recall that each lattice~$\Lambda$ has three
sublattices, and the condition that $\rhobar_\Lambda$ is trivial is
equivalent to each of these being stable.  The next step is to
determine whether the class is larger than this, i.e., whether it has
width greater than~$2$.  This is not the case if and only if each of
the $2$-isogenous representations has a non-trivial discriminant (as
defined in the previous section), in which case we would like to
determine this (unordered) set of three discriminants.  Furthermore,
we would like to determine $\rho\pmod4$ completely.

It turns out that it is no more work to deal with the more general
situation, where we assume that $\rho\pmod{2^k}$ is trivial for some
$k\ge1$, and determine $\rho\pmod{2^{k+1}}$ completely.  The
description of $\rho\pmod{2^{k+1}}$ will be in terms of a collection
of four additive quadratic characters, which we will be able to
determine using only the values of $F_\p(1)$ for $\p$ in the same
quadratically independent set~$T_2$ used in the previous section.  The
reason for this is that $\gl(\Z/2^{k+1}\Z)$ is an extension of
$\gl(\Z/2^{k}\Z)$ by~$M_2(\F_2)$, which is (as additive group) an
elementary abelian of order~$2^4$, as can be seen by the following
short exact sequence:
\[
    0 \longrightarrow M_2(\F_2) \longrightarrow \gl(\Z/2^{k+1}\Z)
    \longrightarrow \gl(\Z/2^{k}\Z) \longrightarrow 1
\]
where the second arrow maps $A\in M_2(\F_2)$ to $\II+2^kA\in
\gl(\Z/2^{k+1}\Z)$.

\bigskip

Thus let $\rho\colon G_{K}\rightarrow\gl_{2}(\Z_{2})$ be an integral
Galois representation unramified outside $S$, and assume that $\rho$
is trivial modulo $2^{k}$ for some positive integer $k$.  Write
\begin{align}\label{rhomod2k}
\rho(\sigma)=\II+2^{k}\mu(\sigma),
\end{align}
where
$$
 \mu(\sigma) = \begin{pmatrix} a(\sigma) & b(\sigma)\\
                               c(\sigma)&d(\sigma) \end{pmatrix}
 \in M_{2}(\Z_{2}).
$$ Then $F_\sigma(1)=2^{2k}\det\mu(\sigma)\equiv0\pmod{2^{2k}}$, and
 we can use the test function $t_{2k}(\p)=\frac{1}{2^{2k}}F_\p(1) =
 \det\mu(\sigma) \equiv ad-bc \pmod2$ for $\p\notin S$.

Secondly, with the same notation,
\[
   \det\rho(\sigma) \equiv 1+2^k(a+d) \pmod{2^{2k}},
\]
so
\[
   a+d \equiv \frac{1}{2^k}(\det\rho(\sigma) -1) \pmod{2}.
\]
Thus we see that the Black Box gives us the values of
both~$\tr\mu(\sigma)$ and $\det\mu(\sigma)\pmod2$ for
$\sigma=\frobp\in G_K$.  Now the map $\sigma \mapsto
\mu(\sigma)\pmod2$ is a group homomorphism $G_{K}\to M_{2}(\F_{2})$;
composing with the four characters
\begin{align*}
M_{2}(\F_{2})&\rightarrow\F_{2}\\
\begin{pmatrix}a&b\\c&d \end{pmatrix}&\mapsto a,b,c,d
\end{align*}
we obtain four additive characters of~$G_K$
\begin{align*}
G_{K}&\rightarrow\F_{2}\\
\sigma &\mapsto a(\sigma),b(\sigma),c(\sigma),d(\sigma)\pmod 2
\end{align*}
all unramified outside $S$, which we denote by
$\chi_{a},\chi_{b},\chi_{c}$ and $\chi_{d}$. To each character there
is associated a discriminant, named
$\Delta_{a},\Delta_{b},\Delta_{c},\Delta_{d}\in K(S,2)_u$.  Set
$\chi_{abcd}=\chi_{a}+\chi_{b}+\chi_{c}+\chi_{d}$ and
$\chi_{\det}=\chi_{a}+\chi_{d}$; the latter has discriminant
$\Delta_{\det}=\Delta_a\Delta_d$ (the reason for this notation will be
clear after the following lemma).  Our task is to use the values of
$a+d$ and $ad-bc$ at suitably chosen primes to obtain information
about these four characters.

The previous computation of determinants gives the following result
linking $\tr\mu(\sigma)=a+d$ with $\det\rho(\sigma)\pmod{2^{k+1}}$.
Recall that by equality of discriminants we always mean modulo
squares.
\begin{lemma}
Assume that $\rho$ is trivial modulo~$2^k$.  With notation as above,
the following are equivalent:
\begin{enumerate}
\item $\det\rho$ is trivial modulo~${2^{k+1}}$;
\item $a(\sigma) \equiv d(\sigma)\pmod2$ for all~$\sigma\in G_K$;
\item $\chi_{\det}=0$;
\item $\Delta_{\det}=1$.
\end{enumerate}
\end{lemma}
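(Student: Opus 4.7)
The plan is to establish the chain of implications $(1) \iff (2) \iff (3) \iff (4)$ by direct computation, using the matrix form $\rho(\sigma) = \II + 2^k\mu(\sigma)$ and unravelling the definitions of the four characters $\chi_a,\chi_b,\chi_c,\chi_d$.

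For $(1) \iff (2)$, I would expand
\[
\det\rho(\sigma) = (1+2^k a(\sigma))(1+2^k d(\sigma)) - 2^{2k}b(\sigma)c(\sigma)
= 1 + 2^k(a(\sigma)+d(\sigma)) + 2^{2k}\bigl(a(\sigma)d(\sigma)-b(\sigma)c(\sigma)\bigr).
\]
Since $k \ge 1$ we have $2k \ge k+1$, so reducing modulo $2^{k+1}$ gives $\det\rho(\sigma) \equiv 1 + 2^k(a(\sigma)+d(\sigma)) \pmod{2^{k+1}}$. Thus $\det\rho(\sigma) \equiv 1 \pmod{2^{k+1}}$ if and only if $2^k(a(\sigma)+d(\sigma)) \equiv 0 \pmod{2^{k+1}}$, which happens if and only if $a(\sigma) \equiv d(\sigma) \pmod 2$.

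For $(2) \iff (3)$, this is simply the definition of $\chi_{\det} = \chi_a + \chi_d$: condition (2) says that this additive character of $G_K$ with values in $\F_2$ vanishes on every $\sigma \in G_K$, i.e., $\chi_{\det}=0$.

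For $(3) \iff (4)$, I would invoke the correspondence between non-trivial additive quadratic characters of $G_K$ unramified outside $S$ and non-trivial elements of $K(S,2)_u$ already used in Section~\ref{sec:characters}: a character of this form is trivial exactly when its associated discriminant is trivial (i.e., a square in $K^*$). Since $\chi_{\det}$ has associated discriminant $\Delta_a\Delta_d = \Delta_{\det}$ by multiplicativity, (3) and (4) are equivalent.

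There is no real obstacle here; the only substantive step is the determinant expansion, and even that collapses modulo $2^{k+1}$ because the $bc$ cross-term is killed by $2^{2k}$. The point of the lemma is conceptual: it packages the fact that, once $\rho$ is trivial mod $2^k$, the obstruction to $\det\rho$ being trivial mod $2^{k+1}$ is controlled by a single additive quadratic character $\chi_{\det}$, and hence by a single element $\Delta_{\det} \in K(S,2)_u$, which can subsequently be identified using the test sets $T_1$ or $T_2$.
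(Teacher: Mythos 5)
Your proof is correct and matches the paper's (implicit) argument: the paper derives the lemma from exactly the same determinant expansion $\det\rho(\sigma)\equiv 1+2^k(a+d)\pmod{2^{2k}}$ given just before the statement, with the remaining equivalences being unwindings of the definitions of $\chi_{\det}$ and $\Delta_{\det}$. Nothing is missing.
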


The characters we have just defined depend not only on the stable
lattice (here $\Lambda=\Z_2^2$, since we are treating $\rho$ as an
integral matrix representation) but also on a choice of basis.  If we
change basis via~$\UU\in\gl_2(\Z_2)$, the result is to
conjugate the matrices $\rho(\sigma)$ and $\mu(\sigma)$ by~$\UU$ and
replace the four characters $\chi_a$, \dots, $\chi_d$ by $\F_2$-linear
combinations.  By using suitable matrices~$\UU$ of orders~$2$ and~$3$
we may obtain all~$6$ permutations of $\{b, c, a+b+c+d\}$: taking
$\UU=\begin{pmatrix}-1&-1\\1&0 \end{pmatrix}$ (of order~$3$) cycles
$b\mapsto c\mapsto a+b+c+d\mapsto b$, while
$\UU=\begin{pmatrix}0&1\\1&0 \end{pmatrix}$ (of order~$2$)
transposes~$b\leftrightarrow c$ while fixing~$a+b+c+d$.  Of course the
determinant character~$a+d$ (which is the sum of these three) is
unchanged.  We will make use of this symmetry in what follows.

More generally, if $\UU\in\gl_2(\Q_2)\cap M_2(\Z_2)$ is such that
conjugation by~$\UU$ maps the image of~$\rho$ into $\gl_2(\Z_2)$, then
$\sigma\mapsto\UU\rho(\sigma)\UU^{-1}$ is another integral
representation isogenous to~$\rho$.  We will use this construction
below with $\UU=\begin{pmatrix}2&0\\0&1 \end{pmatrix}$.

\subsection{Stable sublattices of index~$2^{k+1}$}
We continue to assume that $\rho$ is trivial modulo~$2^k$ and use the
notation introduced in the previous subsection.  Clearly all
sublattices of index~$2^k$ in $\Lambda=\Z_2^2$ are stable
under~$\rho$.  Here we consider the sublattices of index $2^{k+1}$ and
show that the condition of whether they are also stable may be
expressed in terms of the characters~$\{\chi_b, \chi_c,
\chi_{a+b+c+d}\}$.  In terms of the isogeny graph~$\BT(\rho)$, it
contains all paths of length~$k$ (of which there are $3\cdot2^{k-1}$)
starting at the ``central'' vertex associated with~$\Lambda$---so the
graph has width at least~$2k$---and we are determining whether any
such paths may be extended within~$\BT(\rho)$ by one edge.  This turns
out to depend only on the first edge in the path (adjacent
to~$\Lambda$ itself).

When considering sublattices we restrict to those which are
\emph{cocyclic}, i.e. for which the quotient is cyclic, or
equivalently are not contained in~$2\Lambda$.  The cocyclic
sublattices $\Lambda'$ of index $2^{k+1}$ in $\Lambda=\Z_{2}^{2}$ are
given by
\begin{align*}
\Lambda'=\langle \vv\rangle+2^{k+1}\Lambda,\quad\text{with}\quad
\vv=\begin{pmatrix}x\\ y\end{pmatrix}\in\Z_{2}^{2},
\end{align*}
where $x,y$ are not both even, and $\Lambda'$ only depends on the
image of~$\vv$ in $\P^1(\Z/2^{k+1}\Z)$.  Now $\Lambda'$ is fixed by
$\rho$ if and only if for all $\sigma\in G_{K}$
\[
  \rho(\sigma)\vv\equiv \lambda \vv\pmod{2^{k+1}}
\]
for some $\lambda\in\{1,1+2^{k}\}$.  Since
$\rho(\sigma)=\II+2^k\mu(\sigma)$, this is if and only if~$\vv$ is an
eigenvector of~$\mu(\sigma)\pmod2$.  Hence the stability of~$\Lambda'$
only depends on the image of~$\vv$ in~ $\P^1(\Z/2\Z)$, and the three
possible values of~$\vv\pmod2$ correspond to the three edges in the
graph adjacent to~$\Lambda$ itself.  The following is now immediate
(where to save space we write~$\vv$ as a row vector):
\begin{lemma}
\begin{enumerate}
\item $\vv\equiv (1,0)\pmod2$ is an eigenvector of~$\mu(\sigma)$ if
  and only if $c(\sigma)\equiv0\pmod2$; hence such~$\Lambda'$ are
  stable if and only if $\chi_c=0$;
\item $\vv\equiv (0,1)\pmod2$ is an eigenvector of~$\mu(\sigma)$ if
  and only if $b(\sigma)\equiv0\pmod2$; hence such~$\Lambda'$ are
  stable if and only if $\chi_b=0$;
\item $\vv\equiv (1,1)\pmod2$ is an eigenvector of~$\mu(\sigma)$ if
  and only if $a(\sigma)+b(\sigma)+c(\sigma)+d(\sigma)\equiv0\pmod2$;
  such~$\Lambda'$ are stable if and only if $\chi_{a+b+c+d}=0$.
\end{enumerate}
\end{lemma}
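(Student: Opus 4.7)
The plan is to unwind the eigenvector criterion established just before the lemma and translate it into the vanishing of the relevant characters. The setup from the preceding paragraph already gives us: a cocyclic sublattice $\Lambda' = \langle\vv\rangle + 2^{k+1}\Lambda$ is stable under $\rho$ if and only if for every $\sigma \in G_K$ the reduction $\vv \pmod 2$ is an eigenvector of $\mu(\sigma) \pmod 2$. So the entire proof reduces to computing $\mu(\sigma)\vv \pmod 2$ in each of the three cases $\vv \equiv (1,0), (0,1), (1,1) \pmod 2$, and extracting the condition for $\mu(\sigma)\vv$ to be an $\F_2$-multiple of $\vv$.

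First I would handle $\vv \equiv (1,0)^T \pmod 2$. Here $\mu(\sigma)\vv \equiv (a(\sigma), c(\sigma))^T \pmod 2$, which is an $\F_2$-multiple of $(1,0)^T$ precisely when $c(\sigma) \equiv 0 \pmod 2$. Requiring this for all $\sigma \in G_K$ is, by definition, the condition $\chi_c = 0$. Similarly, for $\vv \equiv (0,1)^T \pmod 2$ one has $\mu(\sigma)\vv \equiv (b(\sigma), d(\sigma))^T \pmod 2$, giving the condition $b(\sigma) \equiv 0 \pmod 2$ for all $\sigma$, i.e.\ $\chi_b = 0$.

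For the remaining case $\vv \equiv (1,1)^T \pmod 2$, one computes $\mu(\sigma)\vv \equiv (a(\sigma)+b(\sigma), c(\sigma)+d(\sigma))^T \pmod 2$; this is an $\F_2$-multiple of $(1,1)^T$ iff $a(\sigma)+b(\sigma) \equiv c(\sigma)+d(\sigma) \pmod 2$, i.e.\ $a(\sigma)+b(\sigma)+c(\sigma)+d(\sigma) \equiv 0 \pmod 2$. Asking this for all $\sigma$ is the statement $\chi_{a+b+c+d}=0$.

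There is essentially no obstacle: the harder work has already been done in showing that the entries of $\mu(\sigma) \pmod 2$ are additive characters and that stability reduces to the eigenvector condition modulo $2$. The only thing worth emphasising is that the three cases of $\vv \pmod 2$ exactly correspond to the three elements of $\P^1(\F_2)$, and hence to the three edges of $\BT(\rho)$ adjacent to the vertex $[\Lambda]$; this gives the stated correspondence between the three edges and the three characters $\chi_c$, $\chi_b$, $\chi_{a+b+c+d}$.
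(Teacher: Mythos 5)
Your proposal is correct and follows exactly the route the paper intends: the preceding paragraph reduces stability of $\Lambda'$ to the condition that $\vv\pmod2$ be an eigenvector of $\mu(\sigma)\pmod2$ for all $\sigma$, and the paper then declares the lemma ``immediate'', which is precisely the case-by-case computation of $\mu(\sigma)\vv$ over $\F_2$ that you carry out. Your calculations in each of the three cases are accurate and consistent with the paper's matrix conventions.
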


For example, when $k=1$, the generic stable Bruhat-Tits tree of width at least $2$ looks like
\begin{figure}[H]
\centering
\begin{tikzpicture}[minimum width=1cm]
\node(0) at (1,5.5) {\phantom{0}};
\node(1) at (3,5.5) {\phantom{1}};
\node[circle,draw](b) at (2,4.5) {${}_{\phantom{cd}} \Delta_{b\phantom{d}}$};
\node[circle,draw](det) at (2,2)  {$~~~1~~~$};
\node[circle,draw](c) at (0,0)  {${}_{\phantom{ab}}\Delta_{c\phantom{d}}$};
\node(2) at (1,-1) {\phantom{2}};
\node(3) at (-1,-1) {\phantom{3}};
\node[circle,draw](abcd) at (4,0) {$\Delta_{abcd}$};
\node(4) at (3,-1) {\phantom{4}};
\node(5) at (5,-1) {\phantom{5}};

\draw [draw,-] (abcd) -- (det);
\draw [draw,-] (det) -- (c);
\draw [draw,-] (det) -- (b);
\draw [thick,dotted,-] (b) -- (0);
\draw [thick,dotted,-] (b) -- (1);
\draw [thick,dotted,-] (c) -- (2);
\draw [thick,dotted,-] (c) -- (3);
\draw [thick,dotted,-] (abcd) -- (4);
\draw [thick,dotted,-] (abcd) -- (5);
\end{tikzpicture}
\makeatletter 
\renewcommand{\thefigure}{A\@arabic\c@figure}
\makeatother
\caption{Tree of width at least $2$.}\label{width2tree}
\end{figure}
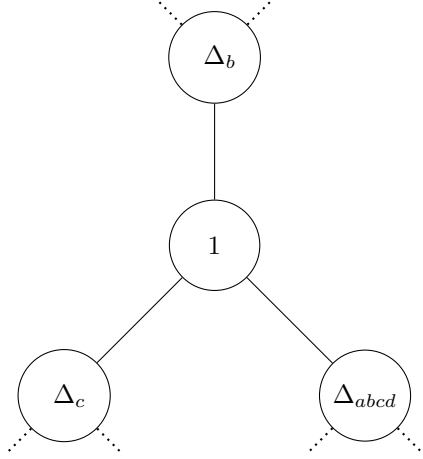
\noindent Here, each vertex has been labelled with its discriminant
in~$K(S,2)_u$, as defined in the previous section.  Note that the
three discriminants at the vertices adjacent to the central one (which
has trivial discriminant) have product~$\Delta_{\det}$, only depending
on~$\det\rho$.

In the case $k=1$ we deduce the following.
\begin{corollary}
When $\rho$ is trivial modulo~$2$, the isogeny graph~$\BT(\rho)$ has
width at least~$3$ if and only if at least one of the characters
$\chi_b$, $\chi_c$, $\chi_{a+b+c+d}$ is trivial.
\end{corollary}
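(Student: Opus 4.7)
The plan is to derive the corollary as a direct specialisation of the preceding lemma to $k=1$. Since $\rho$ is trivial modulo $2$, every cocyclic sublattice of $\Lambda=\Z_2^2$ of index $2$ is stable: these correspond to the three elements of $\P^1(\F_2)$, and each is stable because the lemma's eigenvector condition is vacuous when $\mu(\sigma)\equiv\0$ is replaced by a mod-$2$ matrix that fixes every vector (trivially $\rho(\sigma)\vv \equiv \vv\pmod2$). Hence $\BT(\rho)$ contains $[\Lambda]$ together with its three neighbours, so the width is at least~$2$. A path of length~$3$ exists if and only if some neighbour $[\Lambda_1']$ of~$[\Lambda]$ has a further stable neighbour $[\Lambda_2']\ne[\Lambda]$ in~$\BT(\rho)$.

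Next I would identify such $[\Lambda_2']$ with cocyclic index-$4$ sublattices of~$\Lambda$ contained in $\Lambda_1'$. A short computation in $\Z_2^2$ shows that of the three index-$2$ sublattices of $\Lambda_1'$, exactly one is $2\Lambda$ (recovering $[\Lambda]$), while the other two are cocyclic in $\Lambda$, of index~$4$, with generator $\vv$ reducing mod~$2$ to the same class in $\P^1(\F_2)$ as the direction from $[\Lambda]$ to $[\Lambda_1']$. Thus the three neighbours of $[\Lambda]$ are labelled by $\P^1(\F_2)=\{(1,0),(0,1),(1,1)\}$, and for each direction the two possible further extensions to a length-$3$ path correspond to the two cocyclic index-$4$ sublattices with $\vv\pmod2$ in that direction.

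The preceding lemma, applied with $k=1$, then states exactly that a cocyclic sublattice of index~$4$ with $\vv\pmod2$ equal to $(1,0)$, $(0,1)$, or $(1,1)$ is stable under $\rho$ if and only if the associated character $\chi_c$, $\chi_b$, or $\chi_{a+b+c+d}$ respectively is trivial. Note that if the relevant character is trivial, then \emph{both} such index-$4$ sublattices are stable, so the dependence really is only on $\vv\pmod2$. Combining these observations, a length-$3$ path exists if and only if at least one of the three branches from~$[\Lambda]$ can be extended, which happens if and only if at least one of $\chi_b$, $\chi_c$, $\chi_{a+b+c+d}$ is trivial.

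The only technical step is the verification of the $\vv\pmod2$ identification in the second paragraph; this is a routine computation with Smith normal forms in $\Z_2^2$, after which the corollary follows immediately from the lemma. The main conceptual point to guard against is accidentally counting the edge back to $[\Lambda]$ as an extension, which is avoided by restricting to cocyclic sublattices of $\Lambda$ of index~$4$ contained in $\Lambda_1'$.
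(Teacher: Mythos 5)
Your proof is correct and follows the same route the paper intends: the corollary is an immediate specialisation of the preceding lemma with $k=1$, once one identifies paths of length~$3$ with stable cocyclic index-$4$ sublattices of $\Lambda$ lying under one of the three stable neighbours. The paper states this without proof as ``immediate''; you have simply filled in the routine lattice bookkeeping (including the correct observation that stability depends only on $\vv\bmod 2$, i.e.\ on the first edge of the path), and your argument is sound.
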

Below we will see how to determine all four characters (up to $S_3$
symmetry).  In the case $k=1$, we will determine when all three
characters in the Corollary are non-trivial, so that the graph has
width exactly~$2$, and in this case we will determine precisely the
unordered set of three discriminants in the diagram.

\subsection{Determining the four characters: the test}\label{discriminants}
As before, let $\{\Delta_{i}\}_{i=1}^{r}$ be a fixed basis of $K(S,2)_u$ and write
\begin{align}
\Delta_{b}=\prod_{i=1}^{r}\Delta_{i}^{x_{i}}, \qquad \Delta_{c}=\prod_{i=1}^{r}\Delta_{i}^{y_{i}}, \qquad \Delta_{abcd}=\prod_{i=1}^{r}\Delta_{i}^{z_{i}},
\end{align}
$$\qquad \Delta_{a}=\prod_{i=1}^{r}\Delta_{i}^{u_{i}}, \qquad \Delta_{d}=\prod_{i=1}^{r}\Delta_{i}^{v_{i}},$$
where
$$\x=\{x_{i}\}_{i=1}^{r}, \y=\{y_{i}\}_{i=1}^{r},
\z=\{z_{i}\}_{i=1}^{r},\uu=\{u_{i}\}_{i=1}^{r},\vv=\{v_{i}\}_{i=1}^{r}\in\F_{2}^{r}.$$
To determine $\rho$ modulo $2^{k+1}$ it is enough to determine these
vectors, noting that $\x+\y+\z=\uu+\vv$, and bearing in mind the $S_3$
symmetry.

For primes $\p\notin S$, we use the test function $t_{2k}(\p)\equiv
ad-bc\pmod2$, dividing into cases according to
$\det\rho(\frobp)\pmod{2^{k+1}}$:
\begin{itemize}
\item If $\det(\rho(\frob{\p}))\equiv 1\pmod{2^{k+1}}$, then
  $a+d\equiv 0\pmod 2$, hence $ad\equiv a\pmod2$, so
\begin{align}\label{t2k1det1}
t_{2k}(\p)\equiv a+bc\pmod 2.
\end{align}
\item If $\det(\rho(\frob{\p}))\equiv 1+2^{k}\pmod{2^{k+1}}$, then
  $a+d\equiv 1\pmod 2$, so $ad\equiv0\pmod2$, and
\begin{align}\label{t2k1det12k}
t_{2k}(\p)\equiv bc\pmod 2.
\end{align}
\end{itemize}
Note that we will know from the Black Box which case we are in from
the value of~$\det\rho(\frobp)$.  We also note for later reference
that from
\begin{align}\label{traceofrho}
\tr(\rho(\frob{\p}))=2+2^{k}(a+d)
\end{align}
we can obtain the exact value of $a+d$: later we will need
$a+d\pmod4$.

Now it is convenient to divide into two cases, depending on whether or
not~$\det\rho$ is trivial modulo~$2^{k+1}$; equivalently, whether or
not~$\Delta_{\det}=1$.

\subsection{Determining the four characters: the case $\Delta_{\det}=1$}\label{section1mod2k1}

In this case the character $\chi_{\det}$ is trivial,
$\Delta_{a}=\Delta_{d}$, and $\uu=\vv$. Moreover,
$\Delta_{abcd}=\Delta_{b}\Delta_{c}$, so $\x+\y+\z=\0$.  By $S_3$
symmetry, only the set~$\{\x,\y,\z\}$ is well-defined.

Taking~$T_2=\{\p_i\mid 1\le i\le r\} \cup \{\p_{ij}\mid 1\le i<j\le
r\}$ as in Section~\ref{sec:small-large}, we have
\begin{align}\label{testfunctionst2kmod1k}
t_{2k}(\p_{i})&=u_{i}+x_{i}y_{i},\quad i\geq 1,\\\nonumber
t_{2k}(\p_{ij})&=u_{i}+u_{j}+(x_{i}+x_{j})(y_{i}+y_{j}),\quad i,j\geq 1.
\end{align}
Define
\begin{align}
w_{ij}&=x_{i}y_{j}+x_{j}y_{i}\\\nonumber
&=t_{2k}(\p_{i})+t_{2k}(\p_{j})+t_{2k}(\p_{ij}),\quad i,j\geq 1
\end{align}
and construct the matrix $\WW=(w_{ij})\in M_r(\F_2)$.  Each non-zero
row of~$\WW$ is equal to one of~$\x$, $\y$ or~$\z$, and as in
Section~\ref{sec:small-large}, if $\WW\not=0$ then $\WW$ has at least
two distinct non-zero rows and has rank~$2$.

\smallskip
\noindent\textbf{Case 1.} $\rk\WW=2$.  Now $\WW$ contains at least two
distinct non-zero rows, which by symmetry we can take to be the values
of~$\x$ and~$\y$.  Then $\z=\x+\y$, and we obtain the value of $\uu$
(which equals~$\vv$), using (\ref{testfunctionst2kmod1k}) and the now
known values of $\x$ and $\y$. Therefore we have computed all the
exponent vectors $\uu,\vv,\x,\y,\z$ and obtained
$\Delta_{a},\Delta_{b},\Delta_{c},\Delta_{d}$ and $\Delta_{abcd}$.

\smallskip
\noindent\textbf{Case 2.} $\WW=\0$. Now at least one of~$\x$, $\y$
or~$\z$ is zero; by symmetry we may take $\y=\0$, and $\x=\z$, but we
do not yet know the common value of $\x$ and $\z$.  However we have
$t_{2k}(\p_{i})=u_{i}+x_{i}y_{i}=u_i$, so we recover~$\uu$.

To determine~$\x$ and hence obtain the final discriminant
$\Delta_{b}$, we need to go a step further and consider the values of
$F_\p(1)\pmod{2^{2k+2}}$.  At the end we may need to replace~$\rho$ by
a $2$-isogenous representation; recall that the Black Box only
determines~$\rho$ up to isogeny, so this is valid.

Recalling the notation of $(\ref{rhomod2k})$, since $\y=\0$ we
observe that the entry $c$ is always even; put $c=2c_{1}$. Denote
by~$\chi_{c_1}$ the character $\sigma\mapsto c_1(\sigma)\pmod2$ and
let~$\Delta_{c_1}$ be its discriminant.  From the information already
known and further tests using the Black Box with the same primes
in~$T_2$ but to higher $2$-adic precision, we can determine the values
of the product $\chi_b\chi_{c_1}$.  As in
Section~\ref{sec:small-large}, we can then determine whether
either~$\Delta_b$ or~$\Delta_{c_1}$ is trivial, and their values if
both are non-trivial.  In the first case we may assume (conjugating if
necessary) that $\Delta_b=1$ (equivalently, $\x=\0$).  In the second
case, we may take either of the non-trivial discriminants to
be~$\Delta_b$.  This apparent ambiguity is illusory, since we are free
to replace the initial integral representation~$\rho$ by an isogenous
one.

For $\p\notin S$ we have
\begin{align}\label{senka}
F_{\p}(1)=2^{2k}(ad-2bc_{1}).
\end{align}
In order to proceed, we will need the value of $ad\pmod4$.  Recall
that we know the exact value of $a+d$ from~$(\ref{traceofrho})$, and
we also know the common parity of~$a$ and $d$, namely $u_I$ if
$\p=\p_I$.

\begin{enumerate}
\item If $\p$ is such that $a\equiv d\equiv 0\pmod 2$, then
  $ad\equiv0\pmod4$ and we obtain
\begin{align*}
F_{\p}(1)&\equiv2^{2k+1}bc_{1}\pmod{2^{2k+2}},
\end{align*}
so our standard test function
\begin{align}\label{senka2.1}
t_{2k+1}(\p)=\frac{F_{\p}(1)}{2^{2k+1}}\equiv bc_{1}\pmod{2}
\end{align}
gives the required value.
\item If $\p$ is such that $a\equiv d\equiv 1\pmod2$ and $a+d\equiv
  0\pmod 4$, then $ad\equiv -1\pmod 4$, so $(\ref{senka})$ becomes
\begin{align*}
F_{\p}(1) &\equiv -2^{2k}+2^{2k+1}bc_{1} \pmod{2^{2k+2}}.
\end{align*}
Hence we define a modified test function as follows:
\begin{align}\label{senka2.2}
\tilde t_{2k+1}(\p)=\dfrac{F_{\p}(1)+2^{2k}}{2^{2k+1}}\equiv bc_{1}\pmod{2}.
\end{align}
\item If $\p$ is such that $a\equiv d\equiv 1\pmod2$ and $a+d\equiv
  2\pmod 4$, then $ad\equiv 1\pmod 4$ and $(\ref{senka})$ becomes
\begin{align*}
F_{\p}(1)&\equiv 2^{2k}+2^{2k+1}bc_{1} \pmod{2^{2k+2}};
\end{align*}
we define
\begin{align}\label{senka2.3}
\tilde t_{2k+1}(\p)=\dfrac{F_{\p}(1)-2^{2k}}{2^{2k+1}}\equiv bc_{1}\pmod{2}.
\end{align}
\end{enumerate}

In summary, when $\rho$ is trivial modulo~$2^{k}$ and has trivial
determinant modulo~$2^{k+1}$, we can use the test function values
$t_{2k}(\p)$ for $\p\in T_2$ (where $T_2$ is a quadratically
independent set of primes for~$S$), together with either $t_{2k+1}$ or
one of the modified tests $\tilde t_{2k+1}$ depending on~$\p$, to
determine the full set of characters $\chi_a$, $\chi_b$, $\chi_c$,
$\chi_d$, satisfying $\chi_a+\chi_d=0$, if necessary replacing $\rho$
by a $\gl_2(\Z_2)$-equivalent representation, or by a $2$-isogenous
representation.  In particular, if all the characters are trivial then
(up to a $2$-isogeny) we conclude that $\rho$ is trivial
modulo~$2^{k+1}$.

\subsection{Determining the four characters: the case $\Delta_{\det}\not=1$}\label{section12kmod2k1}

Now assume that the determinant character $\chi_{\det}$ is
non-trivial, i.e. that $\det\rho$ is not
identically~$1\pmod{2^{k+1}}$.  To ease notation, we choose a basis
$\{\Delta_{i}\}_{i=1}^{r}$ of $K(S,2)_u$ such that
$\Delta_{1}=\Delta_{\det}$.  The unknown vectors in~$\F_2^r$ then satisfy
\begin{align*}
\x+\y+\z=\uu+\vv=\ee_{1},
\end{align*}
where $\ee_{1}=(1,0,...,0)$.  Denote by $\x'$, $\y'$ \textit{etc.} the
vectors in~$\F_2^{r-1}$ obtained by deleting the first coordinate.
These satisfy
\begin{align*}
\x'+\y'+\z'=\uu'+\vv'=\0
\end{align*}
and we will determine them first.

Take primes $\p_{i},\p_{ij}\in T_{2}$ with $i, j\geq 2$ and~$i\neq j$.
For such primes (as for all $\p_I$ when $1\notin I$) we have
$\det\rho(\frobp)\equiv1\pmod{2^{k+1}}$, so from (\ref{t2k1det1}) and
using $u_i=v_i$ for $i\ge2$ we see that
\begin{align}\label{testfunctionst2kmod2k}
t_{2k}(\p_{i})&=u_{i}+x_{i}y_{i}, \quad i\geq 2,\\\nonumber
t_{2k}(\p_{ij})&=u_{i}+u_{j}+(x_{i}+x_{j})(y_{i}+y_{j}), \quad 2\leq i\neq j\leq r,
\end{align}
and hence we can compute
\begin{align}
w_{ij}&=x_{i}y_{j}+x_{j}y_{i}\\\nonumber
&=t_{2k}(\p_{i})+t_{2k}(\p_{j})+t_{2k}(\p_{ij}),\quad i,j\geq 2.
\end{align}
Just as in Section~\ref{section1mod2k1} we can determine the shortened
vectors~$\x',\y',\z',\uu',\vv'$ (possibly replacing~$\rho$ by an
isogenous representation).

The final step is to determine the first coordinates $u_{1}$, $v_{1}$,
$x_{1}$, $y_{1}$ and $z_{1}$ with $x_{1}+y_{1}+z_{1}=u_{1}+v_1=1$,
using the remaining primes in~$T_2$ and test values $t_{2k}(\p_{1})$
and $t_{2k}(\p_{1i})$, for $2\leq i\leq r$.  We first note the
following symmetries:
\begin{enumerate}
\renewcommand{\theenumi}{$(\arabic{enumi})$}
\renewcommand{\labelenumi}{\textbf{\theenumi}}
\item $\uu'$ and $\vv'$, and hence $\uu$ and $\vv$, are
  interchangeable (by conjugation); hence we can arbitrarily set
  $u_{1}=1$ and $v_{1}=0$;
\item concerning $\x'$, $\y'$ and $\z'$:
\begin{enumerate}
\renewcommand{\theenumi}{$(\arabic{enumi})$}
\renewcommand{\labelenumi}{\textbf{\theenumi)}}
\item if all are non-zero, and hence also distinct, then we can
  permute them arbitrarily;\label{tziris01}
\item if all are zero, then again we can permute $\x$, $\y$ and $\z$
  arbitrarily; \label{casob01}
\item otherwise, one of them is zero and the others equal and
  non-zero; we have chosen them so that $\y'=\0$ and $\x'=\z'$, so we
  can still swap $\x$ and $\z$.\label{casoc01}
\end{enumerate}
\end{enumerate}

Now $t_{2k}(\p_{1})=x_{1}y_{1}$, since $u_{1}v_{1}=0$.  Hence if
$t_{2k}(\p_{1})=1$ then we deduce that $x_{1}=y_{1}=z_{1}=1$;
prepending a $1$ to $\x'$, $\y'$ and $\z'$ gives $\x$, $\y$ and $\z$.
Otherwise, $x_{1}y_{1}=0$ and we need to determine which one of~$x_1$,
$y_1$ or~$z_1$ is~$1$, the other two being~$0$. We can compute
\begin{align*}
t_{2k}(\p_{1i})&=(u_{1}+u_{i})(v_{1}+v_{i})+(x_{1}+x_{i})(y_{1}+y_{i})\\
&=(x_{1}+x_{i})(y_{1}+y_{i})
\end{align*}
for $i\geq 2$ (using $u_1+u_i\not= v_1+v_i$) and hence get the values
$y_{1}x_{i}+x_{1}y_{i}$ for $i\geq 2$, since we already know $x_1y_1$
and all $x_iy_i$ for $i\ge2$.

Define
$$
  \qq = (t_{2k}(\p_{i})+t_{2k}(\p_{1i})+u_{i})_{i=2}^{r}
  =y_{1}\x'+x_{1}\y' \in \F_{2}^{r-1}.
$$
Consider the three cases under (2) above:
\begin{itemize}
\item In \ref{tziris01}, $\x'$ and $\y'$ are linearly independent so
  $\qq$ determines $x_{1}$ and $y_{1}$ uniquely;
\item In \ref{casob01}, we have complete symmetry and may set $\x=\y=\0$
  and $\z=\ee_{1}$;
\item In \ref{casoc01}, since $\y'=\0$ we have $\qq=y_{1}\x'$ and
  $\x'$ is not zero, so if $\qq\neq \0$ then $y_{1}=1$ and
  $x_{1}=z_{1}=0$. On the other hand, if $\qq=\0$ then $y_{1}=0$ and
  we can set $x_{1}=0$, $z_{1}=1$ (or vice versa, it does not matter
  since $\x'=\z'$).
\end{itemize}

This completes the method to determine the vectors $\uu,\vv,\x,\y,\z$
and hence the discriminants $\Delta_{a}$, $\Delta_{b}$, $\Delta_{c}$,
$\Delta_{d}$ and $\Delta_{abcd}$ and the associated characters.

In summary, when $\rho$ is trivial modulo~$2^{k}$ and has non-trivial
determinant modulo~$2^{k+1}$, we can again use the test function
values $t_{2k}(\p)$ for $\p\in T_2$ (where $T_2$ is a quadratically
independent set of primes for~$S$), together with either $t_{2k+1}$ or
one of the modified tests $\tilde t_{2k+1}$ depending on~$\p$, to
determine the full set of characters $\chi_a$, $\chi_b$, $\chi_c$,
$\chi_d$, satisfying $\chi_a+\chi_d=\chi_{\det}\not=0$, if necessary
replacing $\rho$ by a $\gl_2(\Z_2)$-equivalent representation, or by a
$2$-isogenous representation.  Unlike subsection~\ref{section1mod2k1},
it is not possible for all the characters to be trivial, and $\rho$ is
certainly not trivial modulo~$2^{k+1}$ as $\det\rho$ is
nontrivial modulo~${2^{k+1}}$.

We now summarise the results of this section.
\begin{theorem}\label{thm:largeclasses}
Let $K$ be a number field, $S$ a finite set of primes of~$K$, and
$\rho$ a $2$-dimensional $2$-adic Galois representation over~$K$
unramified outside~$S$.  Suppose that there exists a stable lattice
under the action of~$\rho$ with respect to which $\rho\pmod{2^k}$ is
trivial, for some~$k\ge1$.  Then, using the output of the Black Box
for~$\rho$ for a set~$T_2$ of primes which are quadratically
independent with respect to~$S$, we can determine whether there exists
a (possibly different) stable lattice with respect to which
$\rho\pmod{2^{k+1}}$ is trivial.  More generally we can completely
determine the representation $\rho\pmod{2^{k+1}}$ on some stable
lattice for~$\rho$.
\end{theorem}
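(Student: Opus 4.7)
The plan is to consolidate the constructions developed in the four preceding subsections. First I would fix a $\Z_2$-basis of the given stable lattice so that $\rho(\sigma)=\II+2^k\mu(\sigma)$ as in (\ref{rhomod2k}). The key observation is that $\sigma\mapsto\mu(\sigma)\pmod2$ is a group homomorphism $G_K\to M_2(\F_2)$, so the four matrix entries define additive characters $\chi_a,\chi_b,\chi_c,\chi_d$ of~$G_K$ unramified outside~$S$, corresponding to discriminants $\Delta_a,\Delta_b,\Delta_c,\Delta_d\in K(S,2)_u$. Since $\rho\pmod{2^{k+1}}$ is determined by $\mu\pmod2$, the task reduces to pinning down these four characters---up to the $S_3$ symmetry induced by $\gl_2(\Z_2)$-change of basis and the freedom to replace $\rho$ by a $2$-isogenous integral representation.

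Next I would extract from the Black Box what is needed. From $F_\p(1)=2^{2k}\det\mu(\frob\p)$ and the definition of $t_{2k}$ we obtain $t_{2k}(\p)\equiv ad-bc\pmod 2$, while $\tr\rho(\frob\p)=2+2^k(a+d)$ together with $\det\rho(\frob\p)\pmod{2^{k+1}}$ tells us the value of $a+d\pmod2$ at each prime and hence whether the determinant character $\chi_{\det}=\chi_a+\chi_d$ is trivial. I would then split the argument according to whether $\Delta_{\det}=1$ or $\Delta_{\det}\neq1$.

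In the case $\Delta_{\det}=1$, writing $\Delta_b,\Delta_c,\Delta_{abcd}$ as exponent vectors $\x,\y,\z\in\F_2^r$ (with $\x+\y+\z=\0$) and $\Delta_a=\Delta_d$ as $\uu=\vv$, I would apply $t_{2k}$ to the quadratically independent primes $\p_i,\p_{ij}\in T_2$. This yields the symmetric matrix $\WW=(x_iy_j+x_jy_i)$ whose rank is either~$0$ or~$2$ by the row analysis introduced in Section~\ref{subsec:quadindep}. When $\rk\WW=2$, two distinct nonzero rows supply $\x$ and $\y$, then $\z=\x+\y$, and $\uu$ is recovered from $t_{2k}(\p_i)=u_i+x_iy_i$. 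The main obstacle is the degenerate subcase $\WW=\0$: by $S_3$ symmetry I may assume $\y=\0$ and $\x=\z$, but the common value remains unknown. To resolve it I would raise the $2$-adic precision, using $F_\p(1)\equiv 2^{2k}(ad-2bc_1)\pmod{2^{2k+2}}$ where $c=2c_1$ as in (\ref{senka}), and apply the refined test functions $t_{2k+1}(\p)$ and the corrections $\tilde t_{2k+1}(\p)$ of (\ref{senka2.1})--(\ref{senka2.3}), chosen by the already-known parity of $a+d\pmod4$ at~$\p$. These produce the values of $\chi_b\chi_{c_1}$ on~$T_2$, to which the bilinear argument underlying Proposition~\ref{propo5.3} applies, determining the unordered pair $\{\Delta_b,\Delta_{c_1}\}$ and hence $\Delta_b$ itself (after a possible $2$-isogeny).

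In the case $\Delta_{\det}\neq1$, I would choose a basis of $K(S,2)_u$ containing $\Delta_{\det}=\Delta_1$ and decompose each exponent vector into its first coordinate and its length-$(r-1)$ tail. The tails $\x',\y',\z',\uu',\vv'$ satisfy the same relations as in the previous case and are extracted from the $\WW$-matrix built from primes $\p_i,\p_{ij}$ with $i,j\ge2$, for which $\det\rho(\frob\p)\equiv 1\pmod{2^{k+1}}$. The first coordinates, constrained by $x_1+y_1+z_1=u_1+v_1=1$, are then pinned down from $t_{2k}(\p_1)$ and the values $t_{2k}(\p_{1i})$ for $i\ge2$, with $S_3$ symmetry resolving any remaining ambiguity as in Section~\ref{section12kmod2k1}. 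Once the four characters are known, $\rho\pmod{2^{k+1}}$ is determined on some stable lattice, and triviality of $\rho\pmod{2^{k+1}}$ is equivalent to triviality of all four characters, which forces $\chi_{\det}=0$ and so can only occur in the first case.
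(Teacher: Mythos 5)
Your proposal is correct and follows essentially the same route as the paper: the reduction to the four additive characters $\chi_a,\chi_b,\chi_c,\chi_d$ via $\mu\pmod 2$, the case split on $\Delta_{\det}$, the rank-$0$-or-$2$ analysis of $\WW$, the resolution of the degenerate $\WW=\0$ case by the refined tests $t_{2k+1}$ and $\tilde t_{2k+1}$, and the tail-plus-first-coordinate argument when $\Delta_{\det}\neq1$ all match subsections~\ref{discriminants}--\ref{section12kmod2k1}. No gaps.
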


\begin{example*}{(continuation of Example~\ref{ex:1})}
With $K=\Q$ and $S=\{2,37\}$, let $\rho$ be the Galois representation
attached to elliptic curve isogeny class \lmfdbeciso{43808}{a}, which is one of
those which in the previous section was seen to be large, indicating
that there exists an elliptic curve in the class with full rational
$2$-torsion. In fact, \lmfdbec{43808}{a}{1} is such a curve, but we stress
that the following facts about the isogeny class are being determined
from only the knowledge of the trace of Frobenius at the six primes
in~$T_2$:
\[
\begin{tabular}{c|cccccc}
$I$ & $\{1\}$& $\{2\}$& $\{3\}$& $\{1,2\}$& $\{2,3\}$& $\{1,3\}$\\
\hline
  $p_I$ & $7$ & $53$ & $17$ & $3$ & $5$ & $23$ \\
  $a_p$ & $0$ & $14$ & $-2$ & $0$ & $2$ & $2$ \\
\end{tabular}
\]

Now, $\Delta_{\det}=-1$, this being the discriminant of the cyclotomic
character on the $4$th roots of unity.  Using the method of
subsection~\ref{section12kmod2k1} with $k=1$, we compute $t_2(p_2) =
t_2(53)\equiv0$, $t_2(p_3) = t_2(17)\equiv1$, $t_2(p_{2,3}) =
t_2(5)\equiv 1$, from which $x_2y_3+x_3y_2\equiv0+1+1\equiv0$, hence
(without loss of generality) $\y'\equiv\0$ and $\x'\equiv\z'$ but the
common value is not yet known.  Write $\y_1'=(y_2',y_3')$ for the
exponent vector on $2,37$ of the discriminant of the character
denoted~$c_1$ above.  We find $x_2y_2'\equiv1$, $x_3y_3'\equiv0$ and
$(x_2+x_3)(y_2'+y_3')\equiv1$ using three computations involving the
special test functions $t_3$ and $\tilde{t_3}$ as in (\ref{senka2.1}),
(\ref{senka2.2}) and (\ref{senka2.3}) (once each).  We give details of
one of these.  Let $p= 5= p_{2,3}$, for which the trace is (exactly)
$a_5=2$.  Now $u_2\equiv t_2(p_2)\equiv t_2(53)\equiv 0$ and
$u_3\equiv t_2(p_3)\equiv t_2(17)\equiv 1$, so $u_2+u_3\equiv1$ and
hence we are in the case where $a$ and~$d$ are both odd with
$ad\equiv-1\pmod4$, so $\tilde{t}_3(5)=((1+5-a_5+4)/8)\equiv 1$; this
implies that~$(x_2+x_3)(y_2'+y_3')\equiv1\pmod2$.  The two similar
computations use $a_{53}=14$ and $a_{17}=-2$ to obtain $t_3(53)\equiv
1$ and $\tilde{t}_{3}(17)\equiv 0$.

Solving the congruences for $x_2,x_3,y_2',y_3'$ we find
$\x'\equiv \y_1'\equiv (1,0)$.

Next, $\qq\equiv (t_2(p_i)+t_2(p_{1,i})+u_i)_{i=2}^{3}=(1,0)\not\equiv
\0$, so $y_1\equiv 1$ and $x_1\equiv z_1\equiv 0$.  Finally we have
$\x\equiv \z\equiv (0,1,0)$, $\y\equiv (1,0,0)$, and so $\Delta_b=
\Delta_{abcd}=2$ while $\Delta_c=-1$.  Also $\uu=(1,0,1)$ and
$\vv=(0,0,1)$, so $\Delta_a=-37$ and $\Delta_d=+37$.  The image of the
mod~$4$ representation has order~$2^3=8$ since the space spanned
by~$\x,\y,\z,\uu,\vv$ has dimension~$3$, and its kernel has fixed
field~$\Q(\sqrt{-1},\sqrt{2},\sqrt{37})$.

To confirm this, the three elliptic curves $2$-isogenous to
\lmfdbec{43808}{a}{1} do indeed have discriminants which are square
multiples of~$-1$, $2$ and~$2$.
\end{example*}

\section{Detecting triviality of the semisimplification}\label{sec:triviality}
In the past three sections we have given algorithms for determining
the following properties of a continuous $2$-dimensional $2$-adic
Galois representation~$\rho$, unramified outside a given finite set of
primes~$S$, using only the output from a Black Box oracle giving for
any prime~$\p\notin S$ the Frobenius polynomial~$F_\p(t)$:
\begin{enumerate}
\item whether or not~$\rho$ is residually reducible
  (Theorem~\ref{thm:reducibleresidual}: using the primes in a distinguishing set~$T_0$ for~$S$);
\item if $\rho$ is residually reducible, whether or not~$\rho$ is
  residually trivial up to isogeny (Theorem~\ref{thm:trivialmod2}:
  using the primes in a quadratically independent set~$T_2$ with
  respect to~$S$);
\item if $\rho$ is trivial modulo~$2^k$ up to isogeny, whether or
  not~$\rho$ is trivial modulo~$2^{k+1}$ up to isogeny
  (Theorem~\ref{thm:largeclasses}: again using the primes in a
  quadratically independent set~$T_2$).
\end{enumerate}
We also showed in Section~\ref{sec:characters} how to verify
that~$\det\rho$ was equal to a given $2$-adic character
(Theorem~\ref{determinante}, using the primes in a linearly
independent set~$T_1$ with respect to~$S$).

So far we have only needed finite $2$-adic precision from our Black
Box oracle.  In this section we assume that the oracle can provide us
with the Frobenius polynomials~$F_\p(t)$ exactly, which is usually the
case in practice when they are monic polynomials in~$\Z[t]$.  By
putting together the previous results we can determine whether $\rho$
has trivial semisimplification; since we only know $\rho$ through the
characteristic polynomials of the $\rho(\sigma)$, this is as close as
we can get to showing that $\rho$ is trivial.

We start with a lemma taken from the proof of
Theorem~\ref{determinante}:

\begin{lemma}\label{propodet1}
Let $\chi\colon G_{K}\to\Z_{2}^{*}$ be a continuous character unramified outside $S$. If
\begin{enumerate}
\item $\chi(\sigma)\equiv 1\pmod {2^{k-1}}$ for all $\sigma\in G_{K}$, and
\item $\chi(\frob{\p})\equiv 1\pmod {2^{k}}$ for all $\p\in T_{1}$,
\end{enumerate}
where $T_{1}$ is a linearly independent set with respect to~$S$, then
$\chi(\sigma)\equiv 1\pmod {2^{k}}$ for all $\sigma\in G_{K}$.
\end{lemma}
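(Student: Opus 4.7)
The strategy is to extract an additive mod~$2$ character from~$\chi$ and reduce to Lemma~\ref{quadraticchar}, exactly in the spirit of the proof of Theorem~\ref{determinante}, but now one layer deeper inside the filtration of~$\Z_2^*$.

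First I would dispatch the case $k=1$: condition~(1) is vacuous and condition~(2) is automatic since every element of~$\Z_2^*$ is~$\equiv 1\pmod{2}$; so the conclusion that $\chi(\sigma)\equiv 1\pmod{2}$ holds for every $\sigma\in G_K$ with no work. So assume $k\geq 2$. By hypothesis~(1) we may write
\[
\chi(\sigma)\;=\;1+2^{k-1}f(\sigma),\qquad f(\sigma)\in\Z_2,
\]
and set $\alpha(\sigma)=f(\sigma)\bmod 2\in\F_2$. Multiplicativity of~$\chi$ gives
\[
\chi(\sigma\tau)=1+2^{k-1}\bigl(f(\sigma)+f(\tau)\bigr)+2^{2(k-1)}f(\sigma)f(\tau),
\]
and since $k\geq 2$ we have $2(k-1)\geq k$, so the cross term is absorbed modulo~$2^{k}$. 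Comparing with $\chi(\sigma\tau)=1+2^{k-1}f(\sigma\tau)$ modulo~$2^{k}$ shows that $\alpha(\sigma\tau)=\alpha(\sigma)+\alpha(\tau)$ in~$\F_2$, so $\alpha\colon G_K\to\F_2$ is an additive quadratic character. Continuity and the unramifiedness of~$\chi$ outside~$S$ pass to~$\alpha$ immediately.

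Condition~(2) now says precisely that $\alpha(\frob\p)=0$ for every $\p\in T_1$. Applying Lemma~\ref{quadraticchar}(1) we conclude $\alpha\equiv 0$, which means $\chi(\sigma)\equiv 1\pmod{2^k}$ for all $\sigma\in G_K$, as required.

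The only subtle point is the verification that $\alpha$ is additive, which is the reason the statement is formulated with condition~(1) precisely at the \emph{previous} level $2^{k-1}$: one needs the quadratic correction $2^{2(k-1)}f(\sigma)f(\tau)$ to vanish modulo~$2^{k}$, which works exactly when $k\geq 2$. The $k=1$ case has to be split off but is trivial, so there is no real obstacle; the lemma is essentially a one-step refinement of the filtration argument already used in the proof of Theorem~\ref{determinante}.
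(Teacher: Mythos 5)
Your proof is correct and follows exactly the route the paper intends: the lemma is explicitly extracted from the proof of Theorem~\ref{determinante}, namely writing $\chi(\sigma)=1+2^{k-1}f(\sigma)$, checking that $\alpha=f\bmod 2$ is an additive quadratic character unramified outside~$S$, and invoking Lemma~\ref{quadraticchar}. Your explicit treatment of the $k=1$ edge case and of the cross term $2^{2(k-1)}f(\sigma)f(\tau)$ is a careful (and welcome) elaboration of the same argument.
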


\begin{proposition}\label{theorem3.4.3}
Let $\rho\colon G_{K}\rightarrow \gl_{2}(\Z_{2})$ be a Galois representation unramified outside $S$ such that
\begin{center}
$\rho(\sigma)\equiv \II\pmod{2^{k}}$ for all $\sigma\in G_{K}$.
\end{center}
Suppose that
\begin{enumerate}
\item $\det(\rho(\frob{\p}))\equiv 1\pmod{2^{k+1}}$ for all $\p\in
  T_{1}$, and
\item $F_\p(1) \equiv 0 \pmod{2^{2k+2}}$ for all $\p\in T_{2}$,
\end{enumerate}
where $T_{1}$ is a linearly independent set and $T_{2}$ a
quadratically independent set with respect to~$S$.  Then there exists
an isogenous representation $\rho'$ such that $\rho'(\sigma)\equiv
\II\pmod{2^{k+1}}$ for all $\sigma\in G_{K}$.
\end{proposition}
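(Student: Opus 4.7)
The plan is to apply the machinery developed in Sections \ref{sec:characters} and \ref{sectionLargeIsogenyClasswidthat2} in sequence, using hypothesis (1) to control the determinant character and hypothesis (2) to control the four characters attached to $\mu(\sigma)$, where we write $\rho(\sigma) = \II + 2^k\mu(\sigma)$ with entries $a,b,c,d$ as in Section \ref{sectionLargeIsogenyClasswidthat2}.

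First, since $\rho \equiv \II \pmod{2^k}$, the character $\chi = \det\rho$ satisfies $\chi \equiv 1 \pmod{2^k}$; applying Lemma~\ref{propodet1} with hypothesis~(1) upgrades this to $\det\rho \equiv 1 \pmod{2^{k+1}}$ on all of $G_K$. In the notation of subsection~\ref{section1mod2k1} this means $\chi_{\det} = \chi_a + \chi_d = 0$, i.e.\ $\Delta_{\det}=1$, placing us in precisely that setting.

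Next, we extract information from hypothesis~(2) at the $t_{2k}$ level. The identity $F_\p(1) = 2^{2k}\det\mu(\frob\p) = 2^{2k}(ad-bc)$ shows that hypothesis~(2) forces $ad - bc \equiv 0 \pmod 4$ for $\p\in T_2$, and a fortiori $t_{2k}(\p) \equiv 0 \pmod 2$. Running the matrix argument of Case~2 in subsection~\ref{section1mod2k1} on the entries $w_{ij} = t_{2k}(\p_i) + t_{2k}(\p_j) + t_{2k}(\p_{ij})$ yields $\WW = \0$ and $\uu = \0$, so $\chi_a = \chi_d = 0$ and, after exploiting the $S_3$ symmetry once to fix a labelling, we may assume $\y = \0$ and $\x = \z$. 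Thus $a$, $c$ and $d$ are all even, and only $\chi_b$ may still be nontrivial.

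At the $t_{2k+1}$ level, write $c = 2c_1$ and use $ad \equiv 0 \pmod 4$ (which holds since $a$ and $d$ are both even) to get
\[
F_\p(1) = 2^{2k}(ad - 2bc_1) \equiv -2^{2k+1}\,bc_1 \pmod{2^{2k+2}},
\]
so that hypothesis~(2) is equivalent to $bc_1 \equiv 0 \pmod 2$ on $T_2$; that is, the product character $\chi_b\chi_{c_1}$ vanishes on the quadratically independent set~$T_2$. Proposition~\ref{propo5.3} (applied to the pair $\chi_b,\chi_{c_1}$) then forces at least one of $\chi_b$, $\chi_{c_1}$ to be trivial. If $\chi_b = 0$ we take $\rho' = \rho$ and we are done. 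Otherwise $\chi_{c_1} = 0$, and we set $\rho' = \UU\rho\UU^{-1}$ with $\UU = \bigl(\begin{smallmatrix}2&0\\0&1\end{smallmatrix}\bigr)$; evenness of $c$ makes $\rho'$ integral, and the new $\mu' = \UU\mu\UU^{-1}$ has entries $a, 2b, c_1, d$, all even, so $\rho' \equiv \II \pmod{2^{k+1}}$.

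The only real subtlety is book-keeping the $S_3$ symmetry: the choice made in step~2 (of which of $\chi_b,\chi_c,\chi_{abcd}$ is declared trivial) and the conjugation chosen in the final step must be coordinated. Both the residual reducibility analysis (which supplies $\WW=\0 \Rightarrow$ one character vanishes) and the higher-level test functions $t_{2k+1}$, $\tilde t_{2k+1}$ of subsection~\ref{section1mod2k1} are already in place, so no new density arguments are required; once the labelling is fixed, the argument is a direct composition of results we have already established.
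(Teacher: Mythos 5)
Your proof is correct and follows the paper's own argument essentially step for step: Lemma~\ref{propodet1} to reduce to the case $\Delta_{\det}=1$ of subsection~\ref{section1mod2k1}, vanishing of the $t_{2k}$ test values to force $\WW=\0$, $\y=\0$ and $\uu=\vv=\0$, and then the $t_{2k+1}$-level computation $bc_1\equiv0$ together with a $2$-isogeny (your explicit conjugation by $\mathrm{diag}(2,1)$ is exactly the paper's ``applying a $2$-isogeny if necessary'') to kill the last character. The only quibble is one of ordering: the $S_3$ relabelling that arranges $\y=\0$ must come \emph{before} you conclude $\uu=\0$, since in the subcase $\x=\y\neq\0$, $\z=\0$ the relation $u_i=x_iy_i$ gives $\uu=\x\neq\0$ in the original labelling --- but this is precisely the relabelling you perform anyway, so the argument stands.
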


\begin{proof}
First, by Lemma \ref{propodet1}, the first condition implies that
$\det(\rho(\sigma))\equiv1\pmod{2^{k+1}}$ for all $\sigma\in G_{K}$.

Next we use the notation of the previous section,
specifically~(\ref{rhomod2k}).  The determinant condition just
established shows that $a+d\equiv0\pmod2$ and we are in the case
$\Delta_{\det}=1$ as in subsection~\ref{section1mod2k1} with
$\uu=\vv$.  Next, $F_\p(1)\equiv0\pmod{2^{2k+2}}$ means that all the
test function values are~$0$.  This gives in turn $\WW=\0$, $\y=\0$
and $\uu=\vv=\0$.  Finally we have $bc_1\equiv0\pmod2$ so (applying a
$2$-isogeny if necessary) we may assume that~$b\equiv0$, so $\x=\0$.  Hence all
the characters are trivial, as required.
\end{proof}

Using this proposition, we can prove our final result.
\begin{theorem}
Let $\rho\colon G_{K}\rightarrow\gl_{2}(\Z_{2})$ be a continuous
Galois representation unramified outside $S$ which is residually
reducible.  If
\begin{enumerate}
\item $\det(\rho(\frob{\p}))=1$ for all $\p\in T_{1}$, and \label{001}
\item $\tr(\rho(\frob{\p}))=2$ for all $\p\in T_{2}$, \label{002}
\end{enumerate}
(in particular, if $\frobp$ has characteristic polynomial $(t-1)^2$
for all~$\p\in T_2$), then $\rho$ is reducible, with trivial
semisimplification, and is of the form
\[
  \rho(\sigma) = \begin{pmatrix} 1 & * \\ 0 & 1
  \end{pmatrix}
\]
with respect to a suitable basis.
\end{theorem}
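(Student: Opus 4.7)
The plan is to prove by induction on $k \geq 1$ that, after replacing $\rho$ by a suitable isogenous representation, $\rho(\sigma) \equiv \II \pmod{2^k}$ for all $\sigma \in G_K$, and then to pass to the limit, exploiting the isogeny-invariance of traces and determinants.

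For the base case, I first apply Theorem \ref{determinante} to hypothesis (1) to conclude that $\det \rho$ is identically trivial; combined with (2), this gives $F_\p(t) = (t-1)^2$ and hence $F_\p(1) = 0$ for every $\p \in T_2$. Since $\rho$ is residually reducible by assumption and $F_\p(1) \equiv 0 \pmod 4$ for $\p \in T_2$, Theorem \ref{thm:trivialmod2} then supplies a stable lattice on which $\rhobar$ is trivial. Replacing $\rho$ by the corresponding isogenous representation establishes $\rho \equiv \II \pmod 2$.

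For the inductive step, I assume $\rho(\sigma) \equiv \II \pmod{2^k}$ for all $\sigma$ and check the hypotheses of Proposition \ref{theorem3.4.3}: both $\det \rho(\frobp) \equiv 1 \pmod{2^{k+1}}$ for $\p \in T_1$ and $F_\p(1) \equiv 0 \pmod{2^{2k+2}}$ for $\p \in T_2$ hold tautologically from the exact values $\det \rho(\frobp) = 1$ and $F_\p(1) = 0$, and crucially these conditions depend only on the isogeny class (through the Frobenius polynomials), so they are not degraded when the induction replaces the current representation by the next one in the tree. The proposition then produces an isogenous representation trivial modulo $2^{k+1}$. To pass to the limit, fix any $\sigma \in G_K$: for each $k$ the isogenous $\rho_k$ satisfies $\tr \rho_k(\sigma) \equiv 2 \pmod{2^k}$ and $\det \rho_k(\sigma) \equiv 1 \pmod{2^k}$, and since both quantities are isogeny-invariant, the original $\rho$ satisfies the same congruences for every $k$, forcing $\tr \rho(\sigma) = 2$ and $\det \rho(\sigma) = 1$ exactly. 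Hence the characteristic polynomial of $\rho(\sigma)$ is $(t-1)^2$ for every $\sigma \in G_K$, so $\rho$ has trivial semisimplification.

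Finally, trivial semisimplification gives a $1$-dimensional $\rho$-stable subspace of $V$ on which $\rho$ acts trivially (the first step of any Jordan--H\"older filtration), with the quotient also trivial; choosing a basis whose first vector spans this subspace yields the asserted form $\begin{pmatrix} 1 & * \\ 0 & 1 \end{pmatrix}$. The only real delicacy I expect is the bookkeeping in the induction --- ensuring that the hypotheses of Proposition \ref{theorem3.4.3} remain satisfied after each isogeny replacement --- and this reduces to the clean observation that $\det \rho(\frobp)$ and $F_\p(1)$ are precisely the data provided by the Black Box and hence preserved under all moves within the isogeny class.
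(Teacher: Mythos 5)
Your proposal is correct, and it rests on the same inductive engine as the paper --- Theorem~\ref{thm:trivialmod2} for the base case and Proposition~\ref{theorem3.4.3} for the step, with the key observation that the hypotheses of the latter hold for every $k$ because $\det\rho(\frobp)=1$ and $F_\p(1)=0$ exactly and are isogeny-invariant --- but you deploy that engine differently. The paper argues by contradiction: if $\rho$ were irreducible then $\BT(\rho)$ would be finite, so there would be a \emph{maximal} $k$ with some stable lattice trivial modulo $2^k$, contradicting Proposition~\ref{theorem3.4.3}; having established reducibility, it triangularises and kills the two diagonal characters using Theorem~\ref{determinante}. You instead run the induction affirmatively, producing for every $k$ an isogenous representation trivial modulo $2^k$, and pass to the limit via the conjugation-invariance of characteristic polynomials to conclude that $\tr\rho(\sigma)=2$ and $\det\rho(\sigma)=1$ exactly for \emph{every} $\sigma\in G_K$. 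Each route buys something: yours avoids the cited dichotomy ``irreducible $\iff$ finite isogeny class'' and delivers the stronger intermediate statement that every $\rho(\sigma)$ is unipotent; the paper's avoids the limiting argument and any appeal to Brauer--Nesbitt or Kolchin. The one step you assert rather than prove is the implication from ``every $\rho(\sigma)$ has characteristic polynomial $(t-1)^2$'' to ``trivial semisimplification and upper unitriangular form''. This is standard, and in dimension $2$ it is elementary: if some $\rho(\tau)\ne\II$, normalise $\rho(\tau)=\bigl(\begin{smallmatrix}1&1\\0&1\end{smallmatrix}\bigr)$; then for any $\sigma$ the conditions $\tr\rho(\sigma)=2$ and $\tr\rho(\tau\sigma)=2$ force the lower-left entry of $\rho(\sigma)$ to vanish, so the whole image is upper unitriangular. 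It would be worth including a sentence to this effect, but it is not a gap in substance.
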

\begin{proof}
Suppose that $\rho$ were irreducible; then $\BT(\rho)$ is finite, and
none of the finitely many integral forms $\rho_\Lambda$ is trivial
(otherwise $\rho$ would be) so there is a maximal $k\ge1$ such that
$\rho_{\Lambda}$ is trivial modulo~$2^k$ for some stable
lattice~$\Lambda$.  This contradicts Proposition~\ref{theorem3.4.3}.
Hence $\rho$ is reducible.

With respect to a suitable basis all the matrices $\rho(\sigma)$ are
upper triangular. The diagonal entries determine characters of~$G_K$,
which are both trivial on $\frobp$ for all~$\p\in T_1$ (since the
product of their values is~$1$ and their sum~$2$).  By
Theorem~\ref{determinante} both diagonal characters are trivial.
\end{proof}

\section{Further examples}
We finish by presenting two examples with base field
$K=\Q(\sqrt{-1})$, where the Black Box Galois representations come
from Bianchi modular newforms with rational Hecke eigenvalues.  The
existence of suitable Galois representations in this case was first
developed by Taylor \textit{et al.} in \cite{TaylorI}, \cite{TaylorII}
with subsequent results by Berger and Harcos in~\cite{Berger}.  For
our purposes we only need the existence of the representation and the
knowledge that it is unramified outside the primes dividing the level
of the newform, with the determinant and trace of Frobenius at an
unramified prime $\p$ equal to the norm~$N(\p)$ and the Hecke
eigenvalue~$a_{\p}$ respectively.  These eigenvalues were computed in
these examples using the methods of \cite{CremonaHyp}.  The newforms
we use here are in the LMFDB~\cite{lmfdb} and may be found at
\url{http://www.lmfdb.org/ModularForm/GL2/ImaginaryQuadratic/}.

In both these examples (as in several hundred thousand others we have)
there exist elliptic curves defined over~$K$ whose $2$-adic Galois
representation can be proved to be equivalent to the representation
attached to the newform, using the Serre-Faltings-Livn\'e method as
detailed in~\cite{pacetti}.  However in preparing the examples we did
not use the elliptic curves themselves, but used modular symbol
methods to obtain the traces of Frobenius as Hecke eigenvalues.  As
$\det(\rho(\frobp))=N(\p)$ and we include the prime above~$2$ in~$S$,
for $K=\Q(\sqrt{-1})$ we always have $N(\p)\equiv1\pmod{4}$, and hence
the determinant of the representation is trivial modulo~$4$.

In this way we can obtain information about the elliptic curves
conjecturally associated to a rational Bianchi newform, even in cases
where we have not been able to find a suitable elliptic curve.

\begin{example}\label{exampleN3140c}
The base field is $K=\Q(i)$, where we write $i=\sqrt{-1}$.  The Galois
representation we consider is that attached to the Bianchi newform
with LMFDB label \lmfdbbmf{2.0.4.1}{3140.3}{c}.
Here, 2.0.4.1 is the LMFDB label for~$K$; then 3140.3 is the label for
the level $\N=(56+2i)=(1+i)^2(1+2i)(11+6i)$ of norm~$3140$ (it is the
$3$rd ideal of this norm in the ordering used by the LMFDB).  Finally
the suffix c identifies the newform itself: the new space at
level~$\Gamma_0(\N)$ is three-dimensional with a basis of three
newforms (labelled a, b and c) each with rational Hecke eigenvalues.

Let $S=\{1+i,1+2i,11+6i\}$ be the set of primes dividing the level,
outside which the representation is unramified. Then
\[
  K(S,2)=\left\langle 1+i,1+2i,11+6i,i\right\rangle\cong (\Z/2\Z)^{4}.
\]
There is one $C_3$ extension of $K$ unramified outside~$S$, and $5$
$S_3$ extensions, so we have a set~$\FF$ of $6$ possible cubics.
Using Algorithm~\ref{algoritmoT0} we find that a suitable distinguishing
set is $T_0=\{2+i, 2+3i, 3+2i, 1+4i\}$.  Checking that $a_\p$ is even
for all~$\p\in T_0$ shows that the mod-$2$ representation is
reducible.

Using Algorithm~\ref{algoritmoT2*} we find the following set of ten
primes forms a special quadratically independent set.  (We only use
primes of degree~$1$ here, noting that the cost of computing $a_\p$
grows with $N(\p)$.)
\[
\setlength{\tabcolsep}{1pt} 
\begin{tabular}{c|cccccccccc}
$I$ & $\{1\}$& $\{2\}$& $\{3\}$& $\{4\}$& $\{1,2\}$& $\{1,3\}$& $\{1,4\}$& $\{2,3\}$& $\{2,4\}$& $\{3,4\}$\\
\hline
  $\p_I$ & $(1+4i)$ & $(4+5i)$ & $(8+7i)$ & $(5+2i)$ & $(4+i)$ & $(5+8i)$ & $(6+i)$ & $(5+4i)$&  $(2+i)$ & $(2+3i)$ \\
  $a_\p$ & $2$ & $10$ & $10$ & $6$ & $2$ & $-14$& $-2$ & $-2$ & $2$ & $-6$ \\
  $F_\p(1)$ & $16$ & $32$ & $104$ & $24$ & $16$ & $104$& $40$ & $44$ & $4$ & $20$ \\
  $t_1(\p)$ & $0$ & $0$ & $0$ & $0$ & $0$ & $0$& $0$ & $0$ & $0$ & $0$ \\
  $t_2(\p)$ & $0$ & $0$ & $0$ & $0$ & $0$ & $0$& $0$ & $1$ & $1$ & $1$ \\
\end{tabular}
\]
Applying the test $t_{1}(\p)$, given by $(\ref{test1})$, amounts to
testing whether each~$a_\p\equiv0$ or~$2\pmod4$; here,
all~$a_\p\equiv2\pmod4$.  (In the notation of
subsection~\ref{subsec:quadindep}, we have $\vv=\0$.) This implies
that the width of the isogeny class is at least $2$; we have a large
isogeny class.

To determine whether the width of the isogeny class is actually $2$,
we apply the test $t_{2}(\p)$, given by $(\ref{t2k1det1})$ (since the
determinant is trivial mod~$4$) with $k=1$.  Using the method of
subsection~\ref{section1mod2k1} we construct the matrix
\begin{align*}
\WW = \begin{pmatrix} 0&0&0&0\\0&0&1&1\\0&1&0&1\\0&1&1&0\end{pmatrix}
\end{align*}
which has rank~$2$, so (without loss of generality) we take
$\x=(0,0,1,1)$, $\y=(0,1,0,1)$ (two distinct non-zero rows of~$\WW$)
and $\z=\x+\y=(0,1,1,0)$. Then from (\ref{testfunctionst2kmod1k}) we find that
$\uu=\vv=(0,0,0,1)$. Therefore $\Delta_{b}=(11+6i)(i)$,
$\Delta_{c}=(1+2i)(i)$, $\Delta_{abcd}=(11+6i)(1+2i)$.  So in this
case the stable Bruhat-Tits tree (see Figure~\ref{width2tree})
has four vertices, with discriminants as shown here:
\[
\includegraphics[scale=0.5]{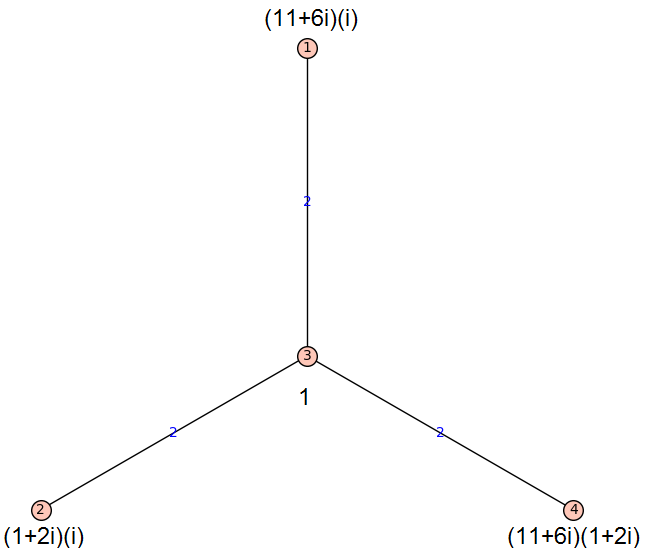}
\]
We can match the data presented in this example to the isogeny class
\lmfdbecnfiso{2.0.4.1}{3140.3}{c} of elliptic curves of conductor $\N$ over $\Q(i)$.
The four elliptic curves in this class include one (with label c3)
with full $K$-rational $2$-torsion, while each of the others (labelled
c1,c2,c4) has a single $K$-rational point of order~$2$ as expected.
Moreover the discriminants of the other three curves are (up to square
factors) $i(11+6i)$, $i(1+2i)$ and~$(1+2i)(11+6i)$ respectively.
\end{example}

\begin{example}
Again with $K=\Q(i)$ as base field, let $S=\{1+i,2+i,2-i\}$.  For this
example we take the Bianchi newform with LMFDB label \lmfdbbmf{2.0.4.1}{200.2}{a}
with level $\N=(10+10i)=(1+i)^3(2+i)(2-i)$ of norm~$200$,
a base-change of a classical newform on $\Gamma_0(40)$.  Now
\[
  K(S,2)=\left\langle i, i + 1, -i - 2, 2i + 1\right\rangle\cong(\Z/2\Z)^{4};
\]
we have put the unit~$i$ first since we will be using the method of
subsection~\ref{section12kmod2k1}.  Now~$\FF$ has only one element and
$T_0=\{4+i\}$.  Since $a_{4+i}=2$ the representation is residually
reducible.

We find $T_2$ as before and obtain the following data from the
newform, acting as our Black Box:
\[
\setlength{\tabcolsep}{1pt} 
\begin{tabular}{c|cccccccccc}
$I$ & $\{1\}$& $\{2\}$& $\{3\}$& $\{4\}$& $\{1,2\}$& $\{1,3\}$& $\{1,4\}$& $\{2,3\}$& $\{2,4\}$& $\{3,4\}$\\
\hline
  $\p$ & $(2+3i)$ & $(5+8i)$ & $(8+7i)$ & $(7+8i)$ & $(6+i)$ & $(5+2i)$ & $(6+5i)$ & $(1+4i)$&  $(4+i)$ & $(4+5i)$ \\
  $N(\p)$ & $13$ & $89$ & $113$ & $113$ & $37$ & $29$ & $61$ & $17$&  $17$ & $41$ \\
  $a_\p$ & $-2$ & $-6$ & $18$ & $18$ & $6$ & $-2$& $-2$ & $2$ & $2$ & $-6$ \\
  $F_\p(1)$ & $16$ & $96$ & $96$ & $96$ & $32$ & $32$& $64$ & $16$ & $16$ & $48$ \\
  $t_4(\p)$ & $1$ & $0$ & $0$ & $0$ & $0$ & $0$& $0$ & $1$ & $1$ & $1$ \\
\end{tabular}
\]
Since $t_k(\p)=0$ for all $\p\in T_2$ for $k=1,2,3$ we see that not
only is $\rho$ residually reducible, it is even trivial modulo~$4$ (up
to isogeny).  Fixing a stable lattice with respect to which $\rho$ is
trivial mod~$4$, we will determine $\rho\pmod8$, noting that it does
not have trivial determinant, as some primes have
norm~$\not\equiv1\pmod8$.

Using the method of subsection~\ref{section12kmod2k1}, we evaluate
$t_4(\p)$ for $\p\in T_2$ (see the last row of the table above).  From
this we evaluate the $3\times3$ matrix
\begin{align*}
\emph{\textbf{W}}'=\begin{pmatrix}0&1&1\\1&0&1\\ 1&1&0 \end{pmatrix}
\end{align*}
and observe that it has rank~$2$.  Thus we may take $\x'=(0,1,1)$,
$\y'=(1,0,1)$ and $\z'=(1,1,0)$. From $t_{4}(\p_{1})=1$ we then get
$x_{1}=y_{1}=z_{1}=1$, so $\x=(1,0,1,1)$, $\y=(1,1,0,1)$,
and~$\z=(1,1,1,0)$.  Using (\ref{testfunctionst2kmod2k}) gives
$\uu'=\vv'=(0,0,1)$ so $\uu=(1,0,0,1)$, $\vv=(0,0,0,1)$, completing
the determination of $\rho\pmod8$: it is the full subgroup of
$\gl_2(\Z/8\Z)$ consisting of matrices congruent to the identity
modulo~$4$. Since all of $\x$, $\y$, $\z$ are non-zero, we have also
determined the entire isogeny class (stable Bruhat-Tits tree), and
know that the discriminants of its leaves are (modulo squares)
$\Delta_{b}=i(2+i)(1+2i)$, $\Delta_{c}=i(1+i)(1+2i)$ and
$\Delta_{abcd}=i(1+i)(2+i)$.  The isogeny graph therefore looks like
this:
\[
\includegraphics[scale=0.49]{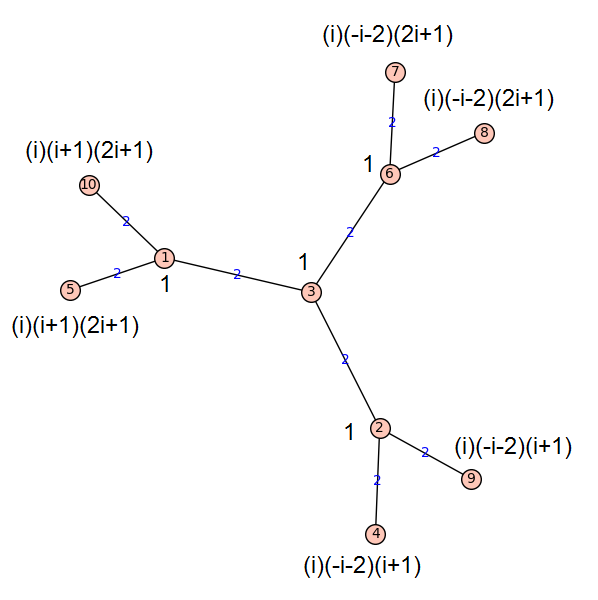}
\]
Moreover, we also have (using the notation of~\ref{discriminants})
$\Delta_{a}=i(1+2i)$ and $\Delta_d=1+2i$, so by
(\ref{rhomod2k}) we have $\rho(\sigma) \equiv
I+4\begin{pmatrix} a(\sigma) &
b(\sigma)\\ c(\sigma)&d(\sigma) \end{pmatrix} \pmod8$, where
$a(\sigma)\equiv0\pmod2\iff\sigma(\sqrt{\Delta_a})=+\sqrt{\Delta_a}$,
and similarly for~$b,c,d$.

We can match the data presented in this example to the isogeny class
\lmfdbecnfiso{2.0.4.1}{200.2}{a} of elliptic curves of conductor $\N$ over $\Q(i)$.
The class includes the base-change to~$K$ of isogeny class 40a of
elliptic curves defined over~$\Q$ with conductor~$40$; it consists of
ten curves a1--a10 and the graph of $2$-isogenies between them is as
in this diagram, where the vertex labels match the number labels of
the curves in the class.  Consulting the LMFDB page cited, we can
check that the discriminants of the ten curves are as indicated in the
diagram (up to squares).
\end{example}

\subsection*{Acknowledgments}
This work formed part of the 2016 PhD thesis \cite{argaez01} of the
first author, during which he was supported by a Chancellor's
Scholarship from the University of Warwick.  The second author is
supported by EPSRC Programme Grant EP/K034383/1 \textit{LMF: L-Functions and
Modular Forms}, and the Horizon 2020 European Research
Infrastructures project \textit{OpenDreamKit} (\#676541).

\bibliographystyle{plain} \addcontentsline{toc}{chapter}{Bibliography}


\end{document}